\documentclass[12pt]{amsart}
\usepackage[margin=1.0in]{geometry}
\usepackage{amsmath,amssymb,amsfonts,graphicx,color,fancyhdr,psfrag,comment,enumerate,mathabx}
\usepackage[latin1]{inputenc}
\usepackage[hyperpageref]{backref}
\usepackage[colorlinks=true, pdfstartview=FitV, linkcolor=blue, citecolor=blue, urlcolor=blue]{hyperref}
\usepackage[capitalise,noabbrev]{cleveref}
\usepackage{tcolorbox,longfbox}
\allowdisplaybreaks
\usepackage{epstopdf}
\usepackage{mathrsfs}
\usepackage{epsfig}
\usepackage{hyperref}

\usepackage{ifthen}
\usepackage{float}
\usepackage{enumerate}
\usepackage{mathtools}
\usepackage[bottom]{footmisc}
\usepackage{tikz}
\usepackage{comment}
\usetikzlibrary{matrix,shapes,arrows,positioning,chains}
\usepackage{amssymb,amsmath,amsfonts}
\usepackage{bbm}

\numberwithin{equation}{section}

\newtheorem{theorem}{Theorem}[section]
\newtheorem{proposition}[theorem]{Proposition}
\newtheorem{lemma}[theorem]{Lemma}
\newtheorem{definition}[theorem]{Definition}

\newtheorem{remark}[theorem]{Remark}

\newcommand{\R}{\mathbb{R}}

\newcommand{\N}{\mathbb{N}}

\begin{document}

\title[Besov and Triebel--Lizorkin spaces]{On Besov and Triebel--Lizorkin spaces associated with the Grushin operator}

\author[S. Bagchi, N. Garg, and R. Garg]
{Sayan Bagchi, Nishta Garg \and Rahul Garg}

\address[S. Bagchi]{Department of Mathematics and Statistics, Indian Institute of Science Education and Research Kolkata, Mohanpur--741246, West Bengal, India.}
\email{sayan.bagchi@iiserkol.ac.in}

\address[N. Garg]{Department of Mathematics, Indian Institute of Science Education and Research Bhopal, Bhopal--462066, Madhya Pradesh, India.}
\email{nishta21@iiserb.ac.in}

\address[R. Garg]{Department of Mathematics, Indian Institute of Science Education and Research Bhopal, Bhopal--462066, Madhya Pradesh, India.}
\email{rahulgarg@iiserb.ac.in}

\subjclass[2020]{Primary: 46E36. Secondary: 22E25, 42B20, 58J35}
\keywords{Grushin operator with drift, Exponentially growing measure, Besov spaces, Triebel--Lizorkin spaces, Embeddings, Fractional Leibniz rule.}

\begin{abstract}
In this article, we define classical and non-classical Besov and Triebel--Lizorkin spaces associated with the Grushin operator with or without drift. We establish various characterisations of these spaces, and study their complex interpolation, embedding properties, and fractional Leibniz rules. 
\end{abstract}

\maketitle

\section{Introduction}

In the theory of function spaces, there is a vast literature on Besov and Triebel--Lizorkin spaces. These spaces are known to be quite useful in the study of partial differential equations. For classical results on the Euclidean space, one can refer to \cite{Theory_of_function_spaces}. Owing to their wide applicability, there has also been extensive research in recent times on such function spaces beyond the Euclidean space. In particular, we refer to \cite{characterisations_of_Besov_and_Triebel_Lizorkin_spaces, A_theory_of_Besov_and_Triebel_Lizorkin_spaces_modeled_on_CC_distance, A_difference_characterisation_of_Besov_and_Triebel_Lizorkin_spaces_on_RD_spaces, Continuous_characterisations_of_inhomogeneous_Besov_and_Triebel--Lizorkin_spaces_associated_to_non-negative_self-adjoint_operators, Feneuil_Joseph_Algebra_properties_2018, Furioli_LPD_and_Besov_spaces_on_Lie_groups_of_polynomial_growth_2006, Gallagher_Besov_algebras_on_Lie_groups_of_polynomial_growth_2012} which consist of works related to doubling metric measure spaces, Lie groups with a left-invariant Riemannian structure, and Lie groups endowed with a sub-Riemannian structure. 

\medskip 
In the present article, we study non-homogeneous classical and non-classical Besov and Triebel--Lizorkin spaces associated with the Grushin operator with or without drift. We remark that stronger results for homogeneous function spaces are recently established in \cite{Bruno-Homogeneous-algebras-via-heat-kernel-estimates-2022, bui2025bilinearfractionalleibnizrules,Bui_Huy_Duong_weightes_Besov_and_Triebel_Lizorkin_spaces_2020} and the setup in these works is much more general. However, these works only deal with classical function spaces and that too without a non-trivial drift. Building upon the recent work of Bruno et al. \cite{Bruno_Marco_Vallarino_Besov_TL_20}, where the authors introduced non-homogeneous Besov and Triebel--Lizorkin spaces associated with sub-Laplacians with drift on certain non-compact, connected Lie groups, in this article we study the above-mentioned function spaces associated with the Grushin operator with or without drift in a unified way. 

\medskip 
In the Euclidean setup, Besov and Triebel--Lizorkin spaces are defined by using the Littlewood--Paley decomposition of a function, which depends on the Mihlin--H\"{o}rmander's multiplier theorem for the Laplacian. In the non-Euclidean setup, the Mihlin--H\"{o}rmander's multiplier theorem is known to fail in some cases. This, in fact, is also the case for the sub-Laplacian with drift. To circumvent this issue, the authors of \cite{Bruno_Marco_Vallarino_Besov_TL_20} utilised the classical ``Gauss Weierstrass'' characterisation of such spaces to define analogous spaces in their setup. Let us briefly recall it. 

\medskip 
Let $G$ be a non-compact connected Lie group with the set of left-invariant vector fields $\{X_{1}, \ldots, X_{l}\}$ satisfying H\"{o}rmander's condition. Let $\sigma$ be the right Haar measure on $G$ and $\delta$ the modular function. Denote by $\chi$ a positive and continuous character on $G$ and let $d\mu_{\chi} = \chi \, d\sigma$. Let 
$$
\Delta_{\chi} = -\sum_{j=1}^{l}(X_{j}^{2} + c_{j} X_{j}), \qquad  \text{where} \, c_{j} = (X_{j}\chi)(e), \, \, \text{for} \, \, j=1, \ldots, l.
$$
It is known that $\Delta_{\chi}$ is essentially self-adjoint on $L^{2}(\mu_{\chi})$ (see \cite[Proposition 3.1]{Hebisch-Mauceri-Meda-spectral-multipliers-drift-Lie-group-Math-Z-2005}). Now, for any $\alpha \geq 0, \, m > \alpha/2, \, p, \, q \in [1,\infty]$ and $t_0 \in (0,1)$, authors in \cite{Bruno_Marco_Vallarino_Besov_TL_20} defined Besov and Triebel--Lizorkin spaces to be the spaces of all tempered distributions $f$ such that
$$
\|e^{-t_0\Delta_{\chi}}f\|_{L^{p}(d\mu_{\chi})} + \left(\int_{0}^{1} (t^{-\alpha/2} \, \|(t\Delta_{\chi})^{m} \, e^{-t\Delta_{\chi}} f\|_{L^{p}(d\mu_{\chi})})^{q} \, \frac{dt}{t}\right)^{1/q} < \infty,
$$
and 
$$
\|e^{-t_0\Delta_{\chi}}f\|_{L^{p}(d\mu_{\chi})} + \left\|\left(\int_{0}^{1} (t^{-\alpha/2} \, |(t\Delta_{\chi})^{m} \, e^{-t\Delta_{\chi}} f|)^{q} \, \frac{dt}{t}\right)^{1/q}\right\|_{L^{p}(d\mu_{\chi})} < \infty,
$$
respectively, with obvious modification when $q = \infty$. They proved several equivalent characterisations of Besov and Triebel--Lizorkin norms, such as the Littlewood--Paley type characterisation, characterisation in terms of vector fields, and recursive characterisation. Furthermore, they studied their complex interpolation and embedding and algebra properties. 

\medskip 
Now, recall that the Grushin operator $G = -\Delta_{x'} - |x'|^2 \, \Delta_{x''}$, is a hypoelliptic operator on $\R^{d_1 + d_2}$. 
Here, $x = (x', \, x'') \in \R^{d_1} \times \R^{d_2}$. Recently, in \cite{Garg-Garg-Riesz-transform-Grushin-drift-2026}, we defined the Grushin operator with drift $G_\nu$ for non-zero vectors $\nu \in \R^{d_1}$ and studied boundedness properties of Riesz transforms associated with these operators on $L^p(d\mu_\nu)$-spaces. Afterwards, in \cite{GG-preprint-Sobolev-Grushin-2026}, we defined Sobolev spaces associated with $G$ and $G_\nu$ and studied their embedding and algebra properties. Continuing, in the present article we define Besov and Triebel--Lizorkin spaces and study properties of these spaces analogous to those studied in \cite{Bruno_Marco_Vallarino_Besov_TL_20}. 

\medskip
We defer the basic details pertaining to the operator $G_\nu$ to Section \ref{sec:preliminaries}. Also, when $\nu = 0$, $G_\nu$ is nothing but the Grushin operator $G$. Let $\mathcal{S}(\R^{d_1+d_2})$ stand for the space of all smooth functions $f$ on $\R^{d_1+d_2}$ such that 
\begin{align} \label{def:test_functions}
\sup_{x \in \R^{d_1+d_2}} e^{k|x'|} \, |X^{\gamma} f(x)| < \infty,
\end{align}
for all $k \in \N_0$ and all multi-indices $\gamma$. Here $X$ stands for the first-order gradient vector fields associated with the Grushin operator (see Subsection \ref{subsec:Grushin_operator} for the definition). Let us also denote by $\mathcal{S}'(\R^{d_1+d_2})$ the dual of $\mathcal{S}(\R^{d_1+d_2})$.

\medskip
In order to define our function spaces, we need to introduce the following operator. For any $m \in \N_{0}$ and $t>0$, let us consider 
\begin{align} 
\label{def-discrete-der}
W_{t}^{(m)} f = (tG_{\nu})^{m} e^{-tG_{\nu}} f. 
\end{align} 
We denote the ball volume of the open ball $B(x, \sqrt{t})$ by $V_\nu(x,t)$ (see 
\eqref{main:ball-vol-est}). In the following definitions and the rest of the article, we consider the weight functions $V_\nu(t)$ which are given by $V_\nu(t)(x) = V_\nu(x,t)$. In particular, when $t=1$, we shall write $V_\nu$ for $V_\nu(1)$. 

\begin{definition}[Classical Besov space] \label{def:Besov_space_classical}
Let $1 \leq p, \, q \leq \infty, \, \alpha \geq 0$ and $\eta \in \R$. The classical Besov space $B^{p,q}_{\alpha, \eta}(d\mu_{\nu})$ is defined to be the space of distributions $f \in \mathcal{S}'(\R^{d_1+d_2})$ for which 
$$
\|f\|_{B^{p,q}_{\alpha, \eta}(d\mu_{\nu})} := \mathcal{B}^{p,q}_{\alpha, \eta} (f) + \|V_{\nu}^{\eta} \, e^{-\frac{1}{2} G_{\nu}} f\|_{L^{p}(d\mu_{\nu})} < \infty,
$$
where 
\begin{align*}
\mathcal{B}^{p,q}_{\alpha, \eta} (f) &:= \left(\int_{0}^{1} \|t^{-\alpha/2} \, V_{\nu}^{\eta} \, |W_{t}^{([\alpha/2]+1)} f|\|^{q}_{L^{p}(d\mu_{\nu})} \, \frac{dt}{t} \right)^{1/q}, \quad \text{for } 1 \leq q < \infty, \\ 
\text{and} \qquad 
\mathcal{B}^{p,\infty}_{\alpha, \eta} (f) &:=  \sup_{t \in (0,1)} \|t^{-\alpha/2} \, V_{\nu}^{\eta} \, |W_{t}^{([\alpha/2]+1)} f|\|_{L^{p}(d\mu_{\nu})}. 
\end{align*}
\end{definition}

\medskip 
\begin{definition}[Classical Triebel--Lizorkin space] \label{def:Triebel_Lizorkin_space_classical}
Let $1 \leq p, \, q \leq \infty, \, \alpha \geq 0$ and $\eta \in \R$. The classical Triebel--Lizorkin space $F^{p,q}_{\alpha, \eta}(d\mu_{\nu})$ is defined to be the space of distributions $f \in \mathcal{S}'(\R^{d_1+d_2})$ for which 
$$
\|f\|_{F^{p,q}_{\alpha, \eta}(d\mu_{\nu})} := \mathcal{F}^{p,q}_{\alpha, \eta} (f) + \|V_{\nu}^{\eta} \, e^{-\frac{1}{2} G_{\nu}}  f\|_{L^{p}(d\mu_{\nu})} < \infty, 
$$ 
where 
\begin{align*}
\mathcal{F}^{p,q}_{\alpha, \eta} (f) &:= \left\|\left(\int_{0}^{1} \left( t^{-\alpha/2} \, V_{\nu}^{\eta} \,  |W_{t}^{([\alpha/2]+1)} f| \right)^{q} \, \frac{dt}{t} \right)^{1/q} \right\|_{L^{p}(d\mu_{\nu})}, \quad \text{for } 1 \leq q < \infty, \\ 
\text{and} \qquad 
\mathcal{F}^{p,\infty}_{\alpha, \eta} (f) &:= \left\|\sup_{t \in (0,1)} \left( t^{-\alpha/2} \,V_{\nu}^{\eta} \, |W_{t}^{([\alpha/2]+1)} f| \right)\right\|_{L^{p}(d\mu_{\nu})}.
\end{align*}
\end{definition}


\medskip 
\begin{definition}[Non-classical Besov space] 
\label{def:Besov_space}
Let $1 \leq p, \, q \leq \infty, \, \alpha \geq 0$ and $\eta \in \mathbb{R}$. The non-classical Besov space $\tilde{B}^{p,q}_{\alpha,\eta}(d\mu_{\nu})$ is defined to be the space of distributions $f \in \mathcal{S}'(\R^{d_1+d_2})$ for which 
$$
\|f\|_{\tilde{B}^{p,q}_{\alpha,\eta}(d\mu_{\nu})} := \mathcal{\Tilde{B}}^{p,q}_{\alpha, \eta} (f) + \|V_{\nu}^{-\alpha/Q+\eta} e^{-\frac{1}{2} G_{\nu}} f\|_{L^{p}(d\mu_{\nu})} < \infty, 
$$
where \begin{align*}
\mathcal{\tilde{B}}^{p,q}_{\alpha, \eta} (f) &:= \left(\int_{0}^{1} \|V_{\nu}(t)^{-\alpha/Q} \, V_{\nu}^{\eta} \, |W_{t}^{([\alpha/2]+1)} f|\|^{q}_{L^{p}(d\mu_{\nu})} \, \frac{dt}{t} \right)^{1/q}, \quad \text{for } 1 \leq q < \infty,  \\
\text{and} \qquad 
\mathcal{\Tilde{B}}^{p,\infty}_{\alpha, \eta} (f) &:=  \sup_{t \in (0,1)} \|V_{\nu}(t)^{-\alpha/Q} \, V_{\nu}^{\eta} \, |W_{t}^{([\alpha/2]+1)} f|\|_{L^{p}(d\mu_{\nu})}.
\end{align*}
\end{definition}

\medskip 
\begin{definition}[Non-classical Triebel--Lizorkin space]  \label{def:Triebel_Lizorkin_space}
Let $1 \leq p, \, q \leq \infty, \, \alpha \geq 0$ and $\eta \in \mathbb{R}$. The non-classical Triebel--Lizorkin space $\tilde{F}^{p,q}_{\alpha,\eta}(d\mu_{\nu})$ is defined to be the space of distributions $f \in \mathcal{S}'(\R^{d_1+d_2})$ for which 
$$
\|f\|_{\tilde{F}^{p,q}_{\alpha,\eta}(d\mu_{\nu})} := \mathcal{\tilde{F}}^{p,q}_{\alpha, \eta} (f) + \|V_{\nu}^{-\alpha/Q+\eta} e^{-\frac{1}{2} G_{\nu}}  f\|_{L^{p}(d\mu_{\nu})} < \infty,
$$
where 
\begin{align*}
\mathcal{\tilde{F}}^{p,q}_{\alpha, \eta} (f) &:= \left\|\left(\int_{0}^{1} \left( V_{\nu}(t)^{-\alpha/Q} \, V_{\nu}^{\eta} \, |W_{t}^{([\alpha/2]+1)} f| \right)^{q} \, \frac{dt}{t} \right)^{1/q} \right\|_{L^{p}(d\mu_{\nu})}, \quad \text{for } 1 \leq q < \infty,  \\
\text{and} \qquad 
\mathcal{\tilde{F}}^{p,\infty}_{\alpha, \eta} (f) &:= \left\|\sup_{t \in (0,1)} \left( V_{\nu}(t)^{-\alpha/Q} \, V_{\nu}^{\eta} \, |W_{t}^{([\alpha/2]+1)} f| \right) \right\|_{L^{p}(d\mu_{\nu})}.
\end{align*}
\end{definition}

One may observe that it follows from the local doubling property that  
$$ V_{\nu}(t)^{-\alpha/Q} \lesssim t^{-\alpha/2} \, V_{\nu}^{-\alpha/Q}, $$
for all $0 < t < 1$. Thus, each classical function space $B^{p, q}_{\alpha, \eta}(d\mu_{\nu})$ (resp. $F^{p, q}_{\alpha, \eta}(d\mu_{\nu})$) trivially embeds in the respective non-classical function space $\tilde{B}^{p, q}_{\alpha, \eta + \alpha/Q}(d\mu_{\nu})$ (resp. $\tilde{F}^{p, q}_{\alpha, \eta + \alpha/Q}(d\mu_{\nu})$). But we don't think there is any reverse side embedding. 

\medskip 
We have considered a factor of $V_{\nu}^{\eta}$ in the definitions of both classical and non-classical function spaces, which can also be seen as a weight function whenever $p < \infty$. Let us explain the motivation and importance of introducing such a factor. Recall that when working with function spaces on Lie groups as in \cite{Sobolev_embedding_Lie_groups, Bruno_Marco_Vallarino_Besov_TL_20}, thanks to the fact that the Haar measure of the ball $B(x,1)$ is independent of the center $x$, the ball volume $\mu_{\chi}(B(x, 1))$ is nothing but equivalent to $\chi(x)$ (where $\chi$ is the character associated with the drift). In particular, when $\chi$ is non-trivial, if we do not introduce an appropriate power of the character, one may not get embedding results of the function spaces with the exponentially growing $d \mu_{\chi}$ measure. This was already observed in the study of embeddings of Sobolev spaces in  \cite{Sobolev_embedding_Lie_groups} (see discussion leading to Theorem 1.1 in \cite{Sobolev_embedding_Lie_groups}) and again in \cite{Bruno_Marco_Vallarino_Besov_TL_20} (see discussion leading to Theorem 5.1 in \cite{Bruno_Marco_Vallarino_Besov_TL_20}). In Theorem 1.3 of our work \cite{GG-preprint-Sobolev-Grushin-2026} on Sobolev spaces associated with $G_\nu$, we took care of this phenomenon by incorporating the appropriate ball volume factor. This motivated us to bring in the weight function $V_{\nu}^{\eta}$ in the definitions of the function spaces in the present work.

\medskip 
In the next section, we shall begin with recalling most of the preliminary details relevant to our work. After that, in Section \ref{sec:spaces-definition-and-characterisation}, we shall study several norm characterisations of our Besov and Triebel--Lizorkin spaces, including independence of certain parameters, a Littlewood--Paley type characterisation, characterisation in terms of vector fields, and recursive characterisation. 
Let us also mention that in a recent work, Zhao et al. \cite{Geometric_topics_related_to_Besov_type_spaces_on_the_Grushin_setting_2005} also defined and studied classical Besov spaces for the Grushin operator. We shall compare our classical Besov space with that of \cite{Geometric_topics_related_to_Besov_type_spaces_on_the_Grushin_setting_2005} in Subsection \ref{subsec:comparison_of_classical_Besov_spaces}. Next, we discuss the complex interpolation properties of our function spaces in Section \ref{sec:interpolation}. Finally, in the last two Sections \ref{sec:embeddings} and \ref{sec:algebra_properties}, we study embedding properties and the fractional Leibniz rules for these spaces.


\section{Preliminaries} 
\label{sec:preliminaries}

\subsection{Grushin operator}
\label{subsec:Grushin_operator}

Let us write $x = (x', x'') \in \R^{d_1+d_2}$, where $x' = (x'_1, \ldots, x'_{d_1}) \in \R^{d_1}$ and $x'' = (x''_1, \ldots, x''_{d_2}) \in \R^{d_2}$, and consider the following vector fields on $\R^{d_1+d_2}$: 
$$
X_j = \frac{\partial}{\partial x'_j} \qquad \text{and} \qquad X_{j,k}= x'_j \, \frac{\partial}{\partial x''_k}, \qquad 1 \leq j \leq d_1, \, \hspace{1mm}  1\leq k\leq d_2.
$$
The Grushin operator is defined as the negative of the sum of the squares of these vector fields. More precisely, 
$$
G = -\sum_{j=1}^{d_1} X_j^2 - \sum_{j=1}^{d_1} \sum_{k=1}^{d_2} X_{j,k}^2 = -\sum_{j=1}^{d_1} \frac{\partial^2}{\partial x^{'2}_j} - |x'|^2 \sum_{k=1}^{d_2} \frac{\partial^2}{\partial x^{''2}_k}.
$$
It is well known that $G$ is a second-order hypoelliptic operator. Observe also that it is homogeneous of degree two with respect to the family of non-isotropic dilations $(\delta_r)_{r > 0}$ defined as $\delta_{r}(x', x'') = (rx', r^2 x'')$. The control distance $\rho$ of $G$ is known to satisfy the following asymptotics (see \cite{Analysis_of_degenerate_elliptic_opertators_Robinson}): 
\begin{equation} \label{Grushin_distance}
\rho(x,y) \sim |x'-y'| + \left\{
\begin{array}{ll}
\frac{|x''-y''|}{|x'|+|y'|}, & \mbox{if } |x''-y''|^{\frac{1}{2}} < |x'|+|y'| \\
|x''-y''|^{\frac{1}{2}},& \mbox{if } |x''-y''|^{\frac{1}{2}} \geq |x'|+|y'|.
\end{array} \right.    
\end{equation}
For convenience, we shall always refer to the right-hand side as the Grushin metric $\rho$.

\medskip Let $B(x,r) = \, \{y \in \R^{d_1+d_2}: \rho(x,y)<r\}$ denote the open ball with center $x$ and radius $r$. It was shown in \cite{Analysis_of_degenerate_elliptic_opertators_Robinson} that 
\begin{equation} \label{eq:ball_volume}
|B(x,r)| \sim  r^{d_1+d_2} \, \text{max}\{r,|x'|\}^{d_2} \sim  r^{d_1+d_2}(r+|x'|)^{d_2}, 
\end{equation}
where $|\cdot|$ denotes the Lebesgue measure on $\R^{d_1+d_2}$. Clearly, $(\R^{d_1+d_2}, \rho)$ with the Lebesgue measure $dx$ is a doubling metric measure space with the doubling constant $Q = d_1+2d_2$. It is easy to verify that there is a $C > 0$ such that
\begin{align} \label{est:center_change_in_ball_volume}
|B(x,r)| \leq C \left(1+\frac{\rho(x,y)}{r}\right)^{Q} |B(y,r)|,    
\end{align}
for all $x, \, y \in \R^{d_1+d_2}$ and $r > 0$.

\medskip The operator $G$ generates a symmetric diffusion heat semigroup $(e^{-tG})_{t > 0}$ on $L^{2}(dx)$, which is given by the following integral form: 
\begin{align} 
\label{integral-form-heat-semigp-Grushin}
e^{-tG} f(x) = \int_{\R^{d_1+d_2}} H_{t}(x,y) \, f(y) \, dy.
\end{align}
The heat kernel $H_{t}$ and its derivatives satisfy the following  Gaussian bounds (see \cite{Analysis_of_degenerate_elliptic_opertators_Robinson}). Given any $N \in \N$, there exist constants $b', \, b'' > 0$ such that 
\begin{align} 
|B(x,\sqrt{t})|^{-1} \, e^{-b'' \rho(x,y)^2/t} \lesssim H_{t}(x,y) & \lesssim |B(x,\sqrt{t})|^{-1} \, e^{-b' \rho(x,y)^2/t}, \label{est:heat_kernel_bounds} \\
|X^{\gamma}H_{t}(x,y)| & \lesssim  t^{-|\gamma|/2} \, |B(x, \sqrt{t})|^{-1} \, e^{-b' \rho(x,y)^2/t}, \label{est:heat_kernel_gradient_bounds} 
\end{align}
for all $x, \, y \in \R^{d_1+d_2}$, $t > 0$ and all multi-indices $\gamma \in \N_{0}^{d_1+d_1d_2}$ such that $|\gamma| \leq N$. Here, $X = (X', X'')$, with $X' = (X_{1}, \ldots, X_{d_1})$ and $X'' = (X_{1,1}, \ldots, X_{d_1,d_2})$. 

\medskip 
\textbf{Notation:} With some large $N$ fixed in mind, which may vary at different places, we shall always use the notation $b = b''/b'$ in this article. Clearly, $b \geq 1$. 


\subsection{Grushin operator with drift} \label{subsec:Grushin_operator_with_drift}
Let $\nu$ be a non-zero vector in $\R^{d_1}$. Introduced in \cite{Garg-Garg-Riesz-transform-Grushin-drift-2026}, the Grushin operator with drift is defined as follows: 
\begin{equation} \label{def:Grushin_operator_with_drift}
G_{\nu} = G - 2 \, \nu \cdot X' = -\sum_{j=1}^{d_1} \frac{\partial^2}{\partial x^{'2}_j} - |x'|^2 \sum_{k=1}^{d_2} \frac{\partial^2}{\partial x^{''2}_k} - 2 \sum_{j=1}^{d_1} \nu_{j} \frac{\partial}{\partial x'_j}.     
\end{equation}
It is positive-definite and essentially self-adjoint on $L^{2}(\R^{d_1+d_2}, d\mu_{\nu})$, where $d\mu_{\nu} = e^{2 \nu \cdot x'} \, dx$.

\medskip 
The measure $d\mu_{\nu}$ is in fact of exponential volume growth (see \cite[Lemma 2.4]{Garg-Garg-Riesz-transform-Grushin-drift-2026}). More precisely, if we denote the ball volume of the open ball $B(x, \sqrt{r})$ in this measure by $ V_\nu(x,r)$, then $V_\nu$ satisfies the following asymptotics: 
\begin{align}
\label{main:ball-vol-est}
V_\nu(x,r) \sim 
\left\{
\begin{array}{ll}
e^{2\nu \cdot x'} \, r^{(d_1+d_2)/2} \, (\sqrt{r}+|x'|)^{d_2}, & \mbox{if } \sqrt{r} \leq 1/|\nu| \\
|\nu|^{-(d_1+1)/2-d_2} \, e^{2(\nu \cdot x'+|\nu|\sqrt{r})} \, r^\frac{d_1-1}{4} \, (\sqrt{r}+|x'|)^{d_2}, & \mbox{if } \sqrt{r} > 1/|\nu|.
\end{array} \right.  
\end{align} 

The Grushin operator with drift $G_{\nu}$ generates a symmetric diffusion heat semigroup $(e^{-tG_{\nu}})_{t > 0}$, which has an integral form given by 
\begin{align*}
e^{-tG_{\nu}}f(x) = \int_{\R^{d_1+d_2}} H_{t, \, \nu} (x,y) \, f(y) \, d\mu_{\nu}(y),   
\end{align*}
with the heat kernel $H_{t, \nu}$ given explicitly by 
\begin{equation} \label{eq:heat_kernel_with_drift}
H_{t, \, \nu}(x, y) = e^{-t|\nu|^2} \, e^{-\nu\cdot(x'+y')} \, H_{t}(x,y).
\end{equation}
Here $H_{t} = H_{t,0}$ is the heat kernel of the Grushin operator $G$ as in \eqref{integral-form-heat-semigp-Grushin}. 

\medskip 
Henceforth, unless otherwise stated, we shall use the common notation $G_\nu$ to denote both the Grushin operator with drift for non-zero vectors $\nu$ as well as the Grushin operator $G_0 = G$. Note also that $V_0(x,r)$ is nothing but the Lebesgue measure of $B(x, \sqrt{r})$. 


\subsection{Some basic results}
\label{subsec:basic_results}
\medskip 
In this subsection, we collect or prove some basic results that will be used in various technical arguments later on. We begin by proving the following pointwise estimates concerning the heat semigroup $e^{-tG_{\nu}}$.  

\begin{lemma} \label{lem:heat_operator_and_derivative}
The heat semigroup $e^{-t G_{\nu}}$ satisfies the following estimates.
\begin{enumerate}[(i)]
\item Fix $0 < \kappa' < \kappa$. We have 
\begin{equation*} 
|e^{-t G_\nu} f| \, \lesssim \, e^{-\kappa b t_0 G_\nu} |f|, 
\end{equation*}
for every $t_0 \in (0,1)$ and $t \in [\kappa' t_0, \kappa t_0]$. 

\medskip 
\item Let $C > 0$. Given any $k \in \N_{0}$ and $\gamma \in \N_{0}^{d_1+d_1d_2}$, we have
\begin{align*} 
|X^{\gamma} e^{-t G_{\nu}} f| \, & \lesssim \, t^{-|\gamma|/2} e^{-b t G_\nu} |f|, \\
\text{and} \qquad |G_{\nu}^{k} e^{-t G_{\nu}} X^{\gamma} f| \, & \lesssim t^{-(k + |\gamma|/2)} e^{-b t G_\nu} |f|,
\end{align*} 
for all $t \in (0,C)$. 
\end{enumerate}
\end{lemma}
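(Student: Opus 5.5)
The plan is to reduce everything to pointwise kernel comparisons. By \eqref{eq:heat_kernel_with_drift}, $H_{t,\nu}(x,y) = e^{-t|\nu|^2} e^{-\nu\cdot(x'+y')} H_t(x,y)$, and the heat semigroups are positivity preserving. So it suffices to show that the relevant kernels are dominated in absolute value by a constant multiple of $H_{s,\nu}(x,y)$ for the appropriate $s$.

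The common factor $e^{-\nu\cdot(x'+y')}$ appears on both sides and cancels. The factor $e^{-t|\nu|^2}$ versus $e^{-s|\nu|^2}$ is harmless because both $t$ and $s$ are bounded: $t_0<1$, and $t<C$ in part (ii). So the whole problem reduces to comparing $H_t$ with $H_s$ through the Gaussian bounds \eqref{est:heat_kernel_bounds}.

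For part (i), take $s=\kappa b t_0$. The upper bound gives
$$H_t(x,y)\lesssim |B(x,\sqrt t)|^{-1}e^{-b'\rho^2/t}\le |B(x,\sqrt t)|^{-1}e^{-b'\rho^2/(\kappa t_0)},$$
using $t\le\kappa t_0$. Since $b''/s = b'/(\kappa t_0)$, the lower bound gives
$$H_s(x,y)\gtrsim |B(x,\sqrt s)|^{-1}e^{-b'\rho^2/(\kappa t_0)}.$$
The exponentials therefore match exactly. For the volumes, $s/t\in[b,\kappa b/\kappa']$ is bounded above and below, so doubling via \eqref{eq:ball_volume} gives $|B(x,\sqrt t)|\sim|B(x,\sqrt s)|$. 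Hence $H_t\lesssim H_s$, and integrating against $|f|\,d\mu_\nu$ yields $|e^{-tG_\nu}f|\lesssim e^{-sG_\nu}|f|$.

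For the first estimate in (ii), apply $X^\gamma_x$ to $H_{t,\nu}(x,y)$. By Leibniz, derivatives landing on $e^{-\nu\cdot x'}$ produce bounded factors $\nu^\beta$ (only the $X'$ fields act on it, and $X_{j,k}$ kills it). The remaining derivatives fall on $H_t$ and are controlled by \eqref{est:heat_kernel_gradient_bounds}. Each lower-order term carries $t^{-|\beta|/2}$, which is $\lesssim t^{-|\gamma|/2}$ since $t<C$. This gives
$$|X^\gamma H_{t,\nu}(x,y)|\lesssim t^{-|\gamma|/2}e^{-\nu\cdot(x'+y')}|B(x,\sqrt t)|^{-1}e^{-b'\rho^2/t},$$
and the same comparison as in (i), now with $s=bt$ where $b''/(bt)=b'/t$, bounds this by $t^{-|\gamma|/2}H_{bt,\nu}(x,y)$.

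For the second estimate in (ii), $G_\nu^k e^{-tG_\nu}X^\gamma$ has kernel $(X^\gamma_y)^{*}$ applied to $G_{\nu,x}^k H_{t,\nu}(x,y)$, the adjoint being taken in $L^2(d\mu_\nu)$. The adjoint of $X_j$ is $-X_j-2\nu_j$, and $X_{j,k}$ is skew-adjoint. Thus we need derivative bounds in the $y$ variable and for powers of $G_\nu$. I will handle these as follows.
\begin{itemize}
\item By symmetry $H_t(x,y)=H_t(y,x)$, $y$-derivatives follow from \eqref{est:heat_kernel_gradient_bounds}, with the volume $|B(y,\sqrt t)|$ replaced by $|B(x,\sqrt t)|$ at the cost of $(1+\rho/\sqrt t)^Q$ via \eqref{est:center_change_in_ball_volume}. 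This polynomial factor is absorbed by shrinking $b'$ slightly.
\item $G_\nu^k$ is a polynomial of degree $2k$ in the $X$ fields with bounded coefficients, so it is handled by the same Leibniz argument and gives $t^{-k}$.
\end{itemize}

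The main obstacle is bookkeeping the constant $b$. Absorbing polynomial factors degrades $b'$ into some $b'-\epsilon$, which changes the ratio $b=b''/b'$. The paper's convention allows $b$ to depend on the fixed $N$, so I would simply redefine $b'$ as the decayed constant when fixing $b$.
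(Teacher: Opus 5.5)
Your proposal is correct and follows the paper's proof essentially step for step: the same domination $H_{t,\nu}\lesssim H_{s,\nu}$ from the Gaussian bounds with $s=\kappa b t_0$ or $s=bt$ (using $b''/(bt)=b'/t$ and doubling), the same Leibniz expansion in which only the $X'$ fields hit $e^{-\nu\cdot x'}$, and integration by parts for the last estimate, which the paper only sketches.

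One small gap in that last estimate: symmetry of $H_t$ gives pure $y$-derivative bounds, but your kernel $(X^\gamma_y)^{*}G^k_{\nu,x}H_{t,\nu}(x,y)$ needs mixed $x$--$y$ bounds. You can fix this in either of two ways. You can move $G_\nu^k$ to the $y$ variable as well, since $G^k_{\nu,x}H_{t,\nu}=(-\partial_t)^kH_{t,\nu}=G^k_{\nu,y}H_{t,\nu}$ by symmetry. Alternatively, factor $G_\nu^k e^{-tG_\nu}X^\gamma=\bigl(G_\nu^k e^{-\frac{t}{2}G_\nu}\bigr)\bigl(e^{-\frac{t}{2}G_\nu}X^\gamma\bigr)$ and compose the two pointwise bounds; by positivity and the semigroup law this gives exactly $t^{-(k+|\gamma|/2)}e^{-btG_\nu}|f|$.
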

\begin{proof}
For part $(i)$, note that in view of relation \eqref{eq:heat_kernel_with_drift} and the heat kernel estimates \eqref{est:heat_kernel_bounds}, we have 
\begin{align*}
|e^{-t G_\nu} f(x)| & \leq \int_{\R^{d_1+d_2}} H_{t, \nu}(x,y) \, |f(y)| \, d\mu_{\nu}(y) \\
& \lesssim \int_{\R^{d_1+d_2}} e^{-t|\nu|^2} \, e^{-\nu \cdot (x'+y')} \, |B(x, \sqrt{t})|^{-1} \, e^{-b' \rho(x,y)^2/t} \, |f(y)| \, d\mu_{\nu}(y) \\
& \lesssim \int_{\R^{d_1+d_2}} e^{- \kappa b t_0|\nu|^2} \, e^{-\nu \cdot (x'+y')} \, |B(x, \sqrt{\kappa b t_0})|^{-1} \, e^{-b'' \rho(x,y)^2/(\kappa b t_0)} \, |f(y)| \, d\mu_{\nu}(y) \\
& \lesssim e^{-\kappa b t_0 G_\nu} |f|(x),
\end{align*}
for every $t \in [\kappa' t_0, \kappa t_0]$. 

\medskip 
Next, for the first estimate in part $(ii)$, restricting $t \in (0,C)$, we again make use of the relation \eqref{eq:heat_kernel_with_drift} and the estimates \eqref{est:heat_kernel_bounds} and \eqref{est:heat_kernel_gradient_bounds}, to get 
\begin{align*}
|X^{\gamma} e^{-t G_\nu} f(x)| & \leq \int_{\R^{d_1+d_2}} |X^{\gamma} H_{t, \nu}(x,y)| \, |f(y)| \, d\mu_{\nu}(y) \\
& = \int_{\R^{d_1+d_2}} \left|\sum_{\substack{\beta \leq \gamma\\\gamma_j = \beta_j \, \forall j > d_1}} \binom{\gamma}{\beta} \, (-\nu)^{\gamma-\beta} \, e^{-t|\nu|^2} \, e^{-\nu \cdot (x'+y')} X^{\beta} H_{t}(x,y) \right| \, |f(y)| \, d\mu_{\nu}(y) \\
& \lesssim \sum_{\substack{\beta \leq \gamma\\\gamma_j = \beta_j \, \forall j > d_1}} \int_{\R^{d_1+d_2}} e^{-t|\nu|^2} \, e^{-\nu \cdot (x'+y')} \, t^{-|\beta|/2} \, |B(x, \sqrt{t})|^{-1} \, e^{-b' \rho(x,y)^2/t} \, |f(y)| \, d\mu_{\nu}(y) \\
& \lesssim_{C} \int_{\R^{d_1+d_2}} e^{-b t|\nu|^2} \, e^{-\nu \cdot (x'+y')} \, t^{-|\gamma|/2} \, |B(x, \sqrt{bt})|^{-1} \, e^{-b'' \rho(x,y)^2/(bt)} \, |f(y)| \, d\mu_{\nu}(y) \\
& \lesssim t^{-|\gamma|/2} \int_{\R^{d_1+d_2}} H_{bt, \nu}(x,y) \, |f(y)| \, d\mu_{\nu}(y) \\
& = t^{-|\gamma|/2} e^{-bt G_\nu} |f|(x).
\end{align*}

Finally, one can prove the claimed estimate for $|G_{\nu}^{k} e^{-t G_{\nu}} X^{\gamma} f|$ using integration by parts, heat kernel bounds, and the same arguments as above.
\end{proof}

As we have seen in the introduction, our definitions of Besov and Triebel--Lizorkin spaces involve ball volumes. It is obvious that the pointwise multiplication by ball volume does not commute with the heat semigroup, and the following lemma provides a tool to handle this situation. We shall make use of this trick every now and then in many of our arguments. 

\medskip 
Given a measurable function $f$, let us write $\Tilde{f}_{t,s}(x) = V_{\nu}(x, t)^{s} f(x)$, for $t>0$ and $s \in \mathbb{R}$. 
\begin{lemma} \label{lemma:ball_volume_and_heat_operator_commutation}
Let $s \in \R$ and $C > 0$. We have
\begin{align*}
V_{\nu}(x, t)^{s} \left| e^{-t G_\nu}f(x) \right| & \lesssim e^{-2btG_\nu} |\Tilde{f}_{t,s}| (x), \\
V_{\nu}(x, 1)^{s} \left| e^{-t G_\nu}f(x) \right| & \lesssim e^{-2btG_\nu} |\Tilde{f}_{1,s}| (x), \\
|e^{-t G_\nu} \Tilde{f}_{t,s} (x)| & \lesssim V_{\nu}(x, t)^{s} \, e^{-2btG_\nu}|f|(x), \\ 
\text{and} \qquad \qquad |e^{-t G_\nu} \Tilde{f}_{1,s} (x)| & \lesssim V_{\nu}(x, 1)^{s} \, e^{-2btG_\nu}|f|(x), 
\end{align*}
for all $0 < t \leq C$. 
\end{lemma}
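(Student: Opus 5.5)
The plan is to write everything at the level of kernels and use that the Gaussian factor $e^{-b'\rho(x,y)^2/t}$ can absorb any polynomial factor $(1+\rho(x,y)/\sqrt t)^{N}$. The same Gaussian, after rescaling $t\mapsto 2bt$ as in the proof of Lemma~\ref{lem:heat_operator_and_derivative}, also absorbs bounded exponential factors of the form $e^{c\rho(x,y)}$ when $t\le C$.

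\textbf{Step 1: reduction to a kernel inequality.} By \eqref{eq:heat_kernel_with_drift} and \eqref{est:heat_kernel_bounds}, the first inequality reduces to showing
\[
V_\nu(x,t)^{s}\,V_\nu(y,t)^{-s}\,e^{-t|\nu|^2}e^{-\nu\cdot(x'+y')}\,|B(x,\sqrt t)|^{-1}e^{-b'\rho(x,y)^2/t}\lesssim H_{2bt,\nu}(x,y),
\]
since $|e^{-tG_\nu}f(x)|\le\int H_{t,\nu}(x,y)\,V_\nu(y,t)^{-s}\,|\tilde f_{t,s}(y)|\,d\mu_\nu(y)$. The third inequality needs the same estimate with the roles of $x$ and $y$ in the volume ratio swapped. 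The second and fourth inequalities are identical except that the ratio is $V_\nu(x,1)/V_\nu(y,1)$.

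The lower bound in \eqref{est:heat_kernel_bounds} gives
\[
H_{2bt,\nu}(x,y)\gtrsim e^{-2bt|\nu|^2}e^{-\nu\cdot(x'+y')}\,|B(x,\sqrt{2bt})|^{-1}e^{-b'\rho^2/(2t)}.
\]
Because $t\le C$, the factors $e^{-t|\nu|^2}$ and $e^{-2bt|\nu|^2}$ are comparable, and doubling gives $|B(x,\sqrt{2bt})|\sim|B(x,\sqrt t)|$. So it suffices to prove
\[
\Big(\tfrac{V_\nu(x,r)}{V_\nu(y,r)}\Big)^{\pm s}\lesssim e^{b'\rho(x,y)^2/(2t)},\qquad r\in\{t,1\}.
\]

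\textbf{Step 2: controlling the volume ratio.} Use \eqref{main:ball-vol-est}. For $r\le C$, both regimes in \eqref{main:ball-vol-est} give
\[
V_\nu(x,r)\sim_C e^{2\nu\cdot x'}\,|B(x,\sqrt r)|.
\]
Combining this with \eqref{est:center_change_in_ball_volume} and $|x'-y'|\lesssim\rho(x,y)$ from \eqref{Grushin_distance},
\[
\frac{V_\nu(x,r)}{V_\nu(y,r)}\lesssim e^{2|\nu|\,|x'-y'|}\Big(1+\frac{\rho(x,y)}{\sqrt r}\Big)^{Q}\lesssim e^{c\rho(x,y)}\Big(1+\frac{\rho(x,y)}{\sqrt r}\Big)^{Q}.
\]
By symmetry the same bound holds for the reciprocal, so we may raise it to the power $|s|$.

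\emph{Case $r=t$.} Here $(1+\rho/\sqrt t)^{Q|s|}\lesssim e^{\epsilon\rho^2/t}$ for any $\epsilon>0$. Also, since $t\le C$,
\[
e^{c|s|\rho}\le e^{\epsilon\rho^2/t}\,e^{c^2s^2t/(4\epsilon)}\lesssim_C e^{\epsilon\rho^2/t}.
\]
Choosing $\epsilon$ small relative to $b'/2$ finishes this case.

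\emph{Case $r=1$.} Here the polynomial factor is $(1+\rho)^{Q|s|}$, which is at most $(1+\rho/\sqrt t)^{Q|s|}$ when $t\le 1$. For $1<t\le C$ it is at most a constant depending on $C$ times that quantity. It is therefore absorbed exactly as before.

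\textbf{Main obstacle.} The only delicate point is the exponential factor $e^{2\nu\cdot(x'-y')}$ coming from the drift, which is not polynomial in $\rho/\sqrt t$. It is handled by the elementary inequality $c\rho\le\epsilon\rho^2/t+c^2t/(4\epsilon)$, and this is where the restriction $t\le C$ is essential: the term $c^2t/(4\epsilon)$ is then bounded by a constant. The enlargement of time from $t$ to $2bt$ is what provides the spare Gaussian decay $e^{b'\rho^2/(2t)}$ needed for all the absorptions.
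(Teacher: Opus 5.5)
Your proposal is correct and follows essentially the same route as the paper. In both, you bound the kernel $V_\nu(x,t)^{s}H_{t,\nu}(x,y)V_\nu(y,t)^{-s}$ via \eqref{est:center_change_in_ball_volume} and the ball-volume asymptotics, then pass to $H_{2bt,\nu}$ through the lower Gaussian bound by splitting $e^{-b'\rho^2/t}$ in half, and use $t\le C$ to absorb both the polynomial factor and the drift factor $e^{2s\nu\cdot(x'-y')}$. The differences are cosmetic: you absorb $e^{c\rho}$ via $c\rho\le\epsilon\rho^2/t+c^2t/(4\epsilon)$ where the paper uses $\rho\le\sqrt{C}\,\rho/\sqrt{t}$, and you treat both regimes of \eqref{main:ball-vol-est} and the $V_\nu(\cdot,1)$ variants explicitly rather than by analogy.
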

\begin{proof}
Recall from asymptotics \eqref{eq:ball_volume} and \eqref{main:ball-vol-est} that 
\begin{align*}
V_{\nu}(x, t) \sim 
\left\{
\begin{array}{ll}
e^{2\nu \cdot x'} \, |B(x,\sqrt{t})|, & \mbox{if } \sqrt{t} \leq 1/|\nu| \\ 
(|\nu| \sqrt{t})^{-\frac{(d_1+1)}{2} - d_2} \, e^{2(\nu \cdot x' + |\nu| \sqrt{t})} \, |B(x,\sqrt{t})|, & \mbox{if } \sqrt{t} > 1/|\nu|.
\end{array} \right.  
\end{align*} 
We shall only prove the first estimate. All other estimates can be proved in a similar manner. Also, we shall only consider the case when $\sqrt{t} \leq 1/|\nu|$, as the other case can be handled in a similar way. Now, making use of identity \eqref{eq:heat_kernel_with_drift} and estimates \eqref{est:center_change_in_ball_volume} and \eqref{est:heat_kernel_bounds}, we have 
\begin{align*}
& V_{\nu}(x, t)^{s} \, |e^{-t G_\nu} \, f(x)| \\ 
& \leq \int V_{\nu}(x, t)^{s} \, H_{t,\nu}(x,y) \, |f(y)| \, d\mu_{\nu}(y) \\
& \lesssim \int e^{2 s \nu \cdot x'} \, |B(x,\sqrt{t})|^{s} \, e^{-t|\nu|^2} \, e^{- \nu \cdot (x'+y')} \, |B(x, \sqrt{t}) |^{-1} \, e^{-b' \rho(x,y)^2/t} \, |f(y)| \, d\mu_{\nu}(y) \\
& \lesssim \int e^{2s\nu \cdot x'} \left(1+ \frac{\rho(x,y)}{\sqrt{t}}\right)^{|s|Q} |B(y,\sqrt{t})|^{s} \, e^{-t|\nu|^2} \, e^{- \nu \cdot (x'+y')} \, |B(x, \sqrt{t} ) |^{-1} \, e^{-b' \rho(x,y)^2/t} \, |f(y)| \, d\mu_{\nu}(y) \\
& \sim \int e^{2s \nu \cdot (x'-y')} \left(1+ \frac{\rho(x,y)}{\sqrt{t}}\right)^{|s|Q} \, e^{-t|\nu|^2} \, e^{- \nu \cdot (x'+y')}  |B(x, \sqrt{t})|^{-1} \,  e^{-b'\rho(x,y)^2/t} \, |\Tilde{f}_{t,s}(y)| \, d\mu_{\nu}(y) \\ 
& \lesssim_C \int e^{2s \nu \cdot (x'-y')} \left(1+ \frac{\rho(x,y)}{\sqrt{t}}\right)^{|s|Q} \, e^{-2bt|\nu|^2} \, e^{- \nu \cdot (x'+y')}  |B(x, \sqrt{2bt})|^{-1} \,  e^{-b'\rho(x,y)^2/t} \, |\Tilde{f}_{t,s}(y)| \, d\mu_{\nu}(y) \\ 
& \lesssim \int e^{\frac{2|s| |\nu| \rho(x,y) \sqrt{C}}{\sqrt{t}}} \left(1+ \frac{\rho(x,y)}{\sqrt{t}}\right)^{|s|Q} \, e^{-b'\rho(x,y)^{2}/(2t)} \, H_{2bt, \nu}(x,y) \, |\Tilde{f}_{t,s}(y)| \, d\mu_{\nu}(y), 
\end{align*}
for every $0 < t \leq C$, and since 
\begin{align*}
e^{\frac{2|s| |\nu| \rho(x,y) \sqrt{C}}{\sqrt{t}}} \left(1+ \frac{\rho(x,y)}{\sqrt{t}}\right)^{|s|Q} \, e^{-b'\rho(x,y)^{2}/(2t)} 
\lesssim_{C, s} 1,
\end{align*}
we get
\begin{align*} 
|V_{\nu}(x, t)^{s} \, e^{-t G_\nu} \, f(x)| & \lesssim_{C, s} \int H_{2bt, \nu}(x,y) \,  |\Tilde{f}_{t,s}(y)| \, d\mu_{\nu}(y) = e^{-2bt G_\nu} |\Tilde{f}_{t,s}|(x).  
\end{align*}
This completes the proof of the claimed inequality. 
\end{proof}

The following result is about $L^p-L^q$-boundedness of the heat semigroup. 
\begin{proposition} \label{prop:L^p_L^q_type_ineq_for_heat_operator}
Let $1 \leq p \leq q \leq \infty$. Given any $C>0$, we have 
$$
\|V_{\nu}(\cdot, t)^{\frac{1}{p}-\frac{1}{q}} \, e^{-tG_{\nu}} f\|_{L^{q}(d\mu_{\nu})} \lesssim_{C} \|f\|_{L^{p}(d\mu_{\nu})},
$$
for all $0 < t \leq C$. 
\end{proposition}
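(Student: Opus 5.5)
The plan is to interpolate between the two endpoint cases $p=q$ and $(p,q)=(1,\infty)$, after first establishing a pointwise kernel bound.

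\emph{Step 1 (pointwise kernel bound).} Using \eqref{eq:heat_kernel_with_drift}, \eqref{est:heat_kernel_bounds} and \eqref{main:ball-vol-est}, I would show that for $0<t\le C$
\begin{align*}
H_{t,\nu}(x,y) \lesssim_C V_\nu(x,t)^{-1/2}\, V_\nu(y,t)^{-1/2}\, e^{-c\rho(x,y)^2/t}
\end{align*}
for some $c>0$. Since $t\le C$, in both regimes of \eqref{main:ball-vol-est} we have $V_\nu(x,t)\sim_C e^{2\nu\cdot x'}|B(x,\sqrt t)|$; the extra factors $(|\nu|\sqrt t)^{\cdots}e^{2|\nu|\sqrt t}$ are bounded above and below when $1/|\nu|<\sqrt t\le\sqrt C$. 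Hence
\begin{align*}
e^{-\nu\cdot(x'+y')}|B(x,\sqrt t)|^{-1}\sim V_\nu(x,t)^{-1/2}V_\nu(y,t)^{-1/2}\Big(\frac{|B(y,\sqrt t)|}{|B(x,\sqrt t)|}\Big)^{1/2}.
\end{align*}
The ratio is controlled by $(1+\rho(x,y)/\sqrt t)^{Q/2}$ via \eqref{est:center_change_in_ball_volume}, and this polynomial factor is absorbed by part of the Gaussian. This is the same mechanism used in the proof of Lemma \ref{lemma:ball_volume_and_heat_operator_commutation}.

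\emph{Step 2 (the case $p=q$).} Since $e^{-tG_\nu}$ is a symmetric diffusion semigroup on $L^2(d\mu_\nu)$, it is sub-Markovian and contractive on every $L^p(d\mu_\nu)$. This gives the statement with exponent $0$ directly.

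\emph{Step 3 (the case $p=1$, $q=\infty$).} By Step 1, we get $V_\nu(x,t)H_{t,\nu}(x,y)\lesssim (V_\nu(x,t)/V_\nu(y,t))^{1/2}e^{-c\rho^2/t}$. The ratio $V_\nu(x,t)/V_\nu(y,t)\lesssim e^{2\nu\cdot(x'-y')}(1+\rho/\sqrt t)^Q\le e^{2|\nu|\rho(x,y)}(1+\rho/\sqrt t)^Q$. For $t\le C$, we have $|\nu|\rho\le |\nu|\sqrt C\,\rho/\sqrt t$, so the whole prefactor is dominated by the Gaussian. Hence $V_\nu(x,t)H_{t,\nu}(x,y)\lesssim_C 1$, and therefore $\|V_\nu(\cdot,t)e^{-tG_\nu}f\|_\infty\lesssim\|f\|_{L^1(d\mu_\nu)}$.

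\emph{Step 4 (general $p\le q$).} I would interpolate. Consider the analytic family $T_z f=V_\nu(\cdot,t)^{z}e^{-tG_\nu}f$ on the strip $0\le\Re z\le1$. At $\Re z=0$ it maps $L^{r}\to L^{r}$, and at $\Re z=1$ it maps $L^1\to L^\infty$; imaginary powers have modulus one. Stein's interpolation, or equivalently Riesz--Thorin applied to the positive kernel $V_\nu(x,t)^{\theta}H_{t,\nu}(x,y)$, then gives the bound with $\theta=1/p-1/q$.

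To make the pairings match, it is simpler to interpolate directly. For $q=\infty$ and general $p$, interpolate $L^\infty\to L^\infty$ with $L^1\to L^\infty$ along the line $(1/p,0)$. Alternatively, apply Hölder with the kernel bound $V_\nu(x,t)^{1/p}\big(\int H_{t,\nu}(x,y)^{p'}d\mu_\nu\big)^{1/p'}\lesssim1$, which follows from $H\le V^{-1}$ (Step 3) together with $\int H\,d\mu_\nu\le1$. Then interpolate the resulting family $L^p\to L^\infty$ with $L^p\to L^p$, for fixed $p$, to reach every $q\ge p$. The weight exponent $\theta(1/p)=1/p-1/q$ matches exactly.

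The main obstacle is Step 3, namely justifying that $V_\nu(x,t)/V_\nu(y,t)$ grows at most like $e^{C\rho(x,y)/\sqrt t}$ uniformly for $t\le C$. The exponential drift factor is not a doubling quantity, and it is only absorbed because $t$ is bounded. This explains the dependence of the constant on $C$.
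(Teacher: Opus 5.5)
Your proposal is correct, and it reaches the result by a different and more economical route than the paper.

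\textbf{The paper's route.} The paper proves the two endpoint families $L^1\to L^q$ and $L^p\to L^\infty$ separately. For each it estimates quantities such as $\sup_y\int V_\nu(x,t)^{q/q'}H_{t,\nu}(x,y)^q\,d\mu_\nu(x)$ by splitting into dyadic annuli $2^j\sqrt t\le\rho(x,y)<2^{j+1}\sqrt t$. The paper then notes that it cannot directly interpolate weighted $L^\infty$ spaces. So it uses Lemma \ref{lemma:ball_volume_and_heat_operator_commutation} to move the weights to the right-hand side, and applies Stein--Weiss interpolation with change of measure. Strictly, that weight transfer holds only after replacing $t$ by $2bt$, which is harmless for $t\le C$.

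\textbf{Your route.} You reduce everything to the single pointwise bound $\sup_{x,y}V_\nu(x,t)H_{t,\nu}(x,y)\lesssim_C 1$. This is really just $e^{\nu\cdot(x'-y')}e^{-b'\rho(x,y)^2/t}\lesssim_C 1$ for $t\le C$, using $|x'-y'|\le\rho(x,y)$. You combine it with the sub-Markov bound $\int H_{t,\nu}(x,y)\,d\mu_\nu(y)\le 1$. Together these give $L^p\to L^\infty$ by H\"older, with no annular decomposition; the same two facts also give the paper's $L^1\to L^q$ kernel estimate in one line. Your analytic family $V_\nu(\cdot,t)^z e^{-tG_\nu}$ puts the weight inside the operator, so the $L^\infty$ endpoint causes no difficulty and no weight transfer is needed.

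Your final step does not even need an interpolation theorem. With $u=e^{-tG_\nu}f$, H\"older alone gives
\[
\|V_\nu(\cdot,t)^{1/p-1/q}u\|_{L^q(d\mu_\nu)}\le\|V_\nu(\cdot,t)^{1/p}u\|_{L^\infty(d\mu_\nu)}^{1-p/q}\,\|u\|_{L^p(d\mu_\nu)}^{p/q}.
\]

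\textbf{Trade-off.} The paper's endpoint estimates use only the Gaussian upper bound and the ball-comparison estimate \eqref{est:center_change_in_ball_volume}. Yours lean on $\int H_{t,\nu}\,d\mu_\nu\le 1$, which the paper also uses implicitly for the case $p=q$.

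\textbf{Minor imprecision.} Calling your interpolation ``Riesz--Thorin applied to the positive kernel $V_\nu(x,t)^\theta H_{t,\nu}(x,y)$'' is not accurate, since the operator changes with $\theta$. What you are actually using is Stein's analytic interpolation (or Stein--Weiss), which is what your argument relies on anyway.
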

\begin{proof}
Note that when $p = q$, the result follows from the known $L^{p}(d\mu_{\nu})$-boundedness of the heat operator $e^{-tG_{\nu}}$. 

\medskip Now, let us consider the case when $p = 1$ and $1 \leq q < \infty$. In this case, 
\begin{align}
& \nonumber \|V_{\nu}(\cdot, t)^{1-\frac{1}{q}} \, e^{-tG_{\nu}}f\|_{L^{q}(d\mu_{\nu})} \\
& \nonumber \qquad = \left(\int_{\R^{d_1+d_2}} \left|V_{\nu}(x, t)^{1-\frac{1}{q}} \, \int_{\R^{d_1+d_2}} H_{t,\nu}(x,y) \, f(y) \, d\mu_{\nu}(y) \right|^{q} \, d\mu_{\nu}(x) \right)^{1/q} \\
& \nonumber \qquad \leq \int_{\R^{d_1+d_2}} |f(y)| \left( \int_{\R^{d_1+d_2}} V_{\nu}(x, t)^{\frac{q}{q'}} \, H_{t,\nu}(x,y)^{q} \, d\mu_{\nu}(x) \right)^{1/q} \, d\mu_{\nu}(y), \\ 
& \qquad \leq \sup_{y}  \left( \int_{\R^{d_1+d_2}} V_{\nu}(x, t)^{\frac{q}{q'}} \, H_{t,\nu}(x,y)^{q} \, d\mu_{\nu}(x) \right)^{1/q} \|f\|_{L^1(d\mu_{\nu})}. \label{eq:L^1_to_L^q_norm} 
\end{align}
But, for any $y \in \R^{d_1+d_2}$, we have 
\begin{align*}
& \int_{\R^{d_1+d_2}} V_{\nu}(x, t)^{\frac{q}{q'}} \, H_{t,\nu}(x,y)^{q} \, d\mu_{\nu}(x) \\ 
& \lesssim \int_{\R^{d_1+d_2}} V_{\nu}(x, t)^{\frac{q}{q'}} \left\{ e^{-qt|\nu|^2} \, e^{q \, \nu \cdot (x' - y')} \, V_{\nu}(x, t)^{-q} \, e^{-qb'\rho(x,y)^2/t} \right\} d\mu_{\nu}(x) \\
& \leq e^{-qt|\nu|^2} \int_{\rho(x,y) < \sqrt{t}} V_{\nu}(x, t)^{-1} \, e^{q \, \nu \cdot (x' - y')} \, e^{-qb'\rho(x,y)^2/t} \, d\mu_{\nu}(x) \\
& \quad + \sum_{j=0}^{\infty} e^{-qt|\nu|^2} \int_{2^{j}\sqrt{t} \, \leq \rho(x,y) < \, 2^{j+1} \sqrt{t}} V_{\nu}(x, t)^{-1} \, e^{q \, \nu \cdot (x' - y')} \, e^{-qb'\rho(x,y)^2/t} \, d\mu_{\nu}(x) \\
& =: I + \sum_{j=0}^{\infty} I_{j}.
\end{align*}
Estimation of $I$ is simple. In fact, 
\begin{align*}
I \lesssim e^{-qt|\nu|^{2}} \, \int_{\rho(x,y) < \sqrt{t}} |B(x, \sqrt{t})|^{-1} \, e^{|\nu|\sqrt{t}q} \, dx \lesssim \int_{\rho(x,y) < \sqrt{t}} |B(y, \sqrt{t})|^{-1} \, dx = 1.
\end{align*}
On the other hand, there exist $c' > 0$ such that for any $j \geq 0$, 
\begin{align*}
I_{j} 
& \lesssim e^{q |\nu| 2^{j+1} \sqrt{t}} \, e^{-qb'2^{2j}} \int_{\rho(x,y) < \, 2^{j+1} \sqrt{t}} |B(x, \sqrt{t})|^{-1} \, dx \\
& \lesssim_{C} e^{- c' 2^{2j}} \int_{\rho(x,y) < \, 2^{j+1} \sqrt{t}} \left(1+\frac{\rho(x,y)}{\sqrt{t}} \right)^{Q}|B(y, \sqrt{t})|^{-1} \, dx \\
& \leq e^{- c'2^{2j}} (1+2^{j+1})^{Q} \, |B(y, \sqrt{t} )|^{-1} \, |B(y, 2^{j+1} \sqrt{t})| \\
& \leq e^{- c'2^{2j}} (1+2^{j+1})^{2Q}.
\end{align*}
Summarising, the above estimates imply that 
\begin{align*}
\sup_{y} \int_{\R^{d_1+d_2}} V_{\nu}(x, t)^{\frac{q}{q'}} \, H_{t,\nu}(x,y)^{q} \, d\mu_{\nu}(x) \lesssim_{C} 1 + \sum_{j=0}^{\infty} e^{- c'2^{2j}} (1+2^{j+1})^{2Q} \lesssim 1, 
\end{align*}
and then putting this estimate in inequality \eqref{eq:L^1_to_L^q_norm}, we get 
\begin{align} \label{eq:L^1_to_L^q_norm_1}
\|V_{\nu}(\cdot, t)^{\frac{1}{q'}} \, e^{-tG_{\nu}}f\|_{L^{q}(d\mu_{\nu})} \lesssim_{C} \|f\|_{L^{1}(d\mu_{\nu})},  
\end{align}
which proves the claim of the proposition for $p = 1$ and $1 \leq q < \infty$. 

\medskip Next, we prove the proposition for $q = \infty$ and $1 \leq p < \infty$. For this, we consider 
\begin{align*}
|V_{\nu}(x, t)^{\frac{1}{p}} \, e^{-tG_{\nu}}f(x)| 
& \quad \leq \int_{\R^{d_1+d_2}} V_{\nu}(x, t)^{\frac{1}{p}} \, H_{t,\nu}(x,y) \, |f(y)| \, d\mu_{\nu}(y) \\
& \quad \leq \left(\int_{\R^{d_1+d_2}} V_{\nu}(x, t)^{\frac{p'}{p}} \, H_{t,\nu}(x,y)^{p'} \, d\mu_{\nu}(y)\right)^{1/p'} \, \|f\|_{L^{p}(d\mu_{\nu})} \\
& \quad \lesssim_{C} \|f\|_{L^{p}(d\mu_{\nu})},
\end{align*}
where in the last step we use
$$
\sup_{x} \int_{\R^{d_1+d_2}} V_{\nu}(x, t)^{\frac{p'}{p}} \, H_{t,\nu}(x,y)^{p'} \, d\mu_{\nu}(y) \lesssim_{C} 1,
$$
which can be proved using similar arguments as before. Thus, we get
\begin{align} \label{eq:L^p_to_L^infty}
\|V_{\nu}(\cdot, t)^{\frac{1}{p}} \, e^{-tG_{\nu}}f\|_{L^{\infty}(d\mu_{\nu})} \lesssim_{C} \|f\|_{L^{p}(d\mu_{\nu})}. 
\end{align}

To obtain the remaining cases $1 < p < q < \infty$, we would like to interpolate inequalities \eqref{eq:L^1_to_L^q_norm_1} and \eqref{eq:L^p_to_L^infty}. While doing so, we treat the ball volume factors $V_{\nu}(\cdot, t)^{\frac{1}{q'}}$ and $V_{\nu}(\cdot, t)^{\frac{1}{p}}$ as weight functions. But, since the left-hand side of \eqref{eq:L^p_to_L^infty} involves $L^{\infty}(d\mu_{\nu})$-norm, we do not know if there is an interpolation theorem that we can directly apply. To circumvent this situation, we can take help from Lemma \ref{lemma:ball_volume_and_heat_operator_commutation} and transfer these weight functions to the right-hand side of both the inequalities. More precisely, in view of Lemma \ref{lemma:ball_volume_and_heat_operator_commutation}, inequalities \eqref{eq:L^1_to_L^q_norm_1} and \eqref{eq:L^p_to_L^infty} are equivalent to 
\begin{align*}
& \|e^{-tG_{\nu}}f\|_{L^{r}(d\mu_{\nu})} \lesssim_{C} \|V_{\nu}(\cdot, t)^{-\frac{1}{r'}} \, f\|_{L^{1}(d\mu_{\nu})}, \\
\text{and} \qquad & \|e^{-tG_{\nu}}f\|_{L^{\infty}(d\mu_{\nu})} \lesssim_{C} \|V_{\nu}(\cdot, t)^{-\frac{1}{s}} \, f\|_{L^{s}(d\mu_{\nu})},
\end{align*}
for any $1 \leq r, s < \infty$. 

\medskip 
Now, given $1 < p < q < \infty$, denote by $1 < r < \infty$ the number satisfying $\frac{1}{r'} = \frac{1}{p} - \frac{1}{q}$. Let $\theta \in (0,1)$ be such that $\frac{1}{p} = \frac{1-\theta}{1} + \frac{\theta}{r'}$, so that $\frac{1}{q} = \frac{1}{p} - \frac{1}{r'} = \frac{1-\theta}{r} + \frac{\theta}{\infty}$. Then, one can make use of the interpolation \cite[\S 5.5]{Interpolation_spaces_book} to deduce that 
$$
\|e^{-tG_{\nu}}f\|_{L^{q}(d\mu_{\nu})} \lesssim_{C} \|V_{\nu}(\cdot, t)^{\frac{1}{q}-\frac{1}{p}} \, f\|_{L^{p}(d\mu_{\nu})},
$$
which is equivalent to (thanks once again to Lemma \ref{lemma:ball_volume_and_heat_operator_commutation}) 
$$
\|V_{\nu}(\cdot, t)^{\frac{1}{p}-\frac{1}{q}} \, e^{-tG_{\nu}}f\|_{L^{q}(d\mu_{\nu})} \lesssim_{C} \| f\|_{L^{p}(d\mu_{\nu})}.
$$
This completes the proof of Proposition \ref{prop:L^p_L^q_type_ineq_for_heat_operator}. 
\end{proof}


The vector-valued inequality in the following proposition will be frequently used in the proofs of our main results. We skip the proof of this one as it can easily be established following the arguments of the proof of \cite[Proposition 3.5]{Bruno_Marco_Vallarino_Besov_TL_20}. 

\begin{proposition} \label{prop:semigroup_version_of_Fefferman_Stein_theorem}
Let $1 < p < \infty, \, 1 \leq q \leq \infty$ and $0 < \kappa' < \kappa$. Then, for any sequence of measurable functions $(t_j)$ such that $t_j(x) \in [\kappa' 2^{-j}, \kappa 2^{-j}]$ for all $j \in \N_{0}$ and $x \in \R^{d_1+d_2}$, we have 
$$
\left\|\left(\sum_{j \in \N_{0}} |e^{-t_{j}(.) G_{\nu}} f_{j}|^{q} \right)^{1/q}\right\|_{L^{p}(d\mu_{\nu})} \lesssim \left\|\left(\sum_{j \in \N_{0}} |f_{j}|^{q} \right)^{1/q}\right\|_{L^{p}(d\mu_{\nu})}, \qquad \text{when } \, q < \infty,
$$
and when $q = \infty$, 
$$
\left\|\sup_{j \in \N_{0}} |e^{-t_{j}(.) G_{\nu}} f_{j}|\right\|_{L^{p}(d\mu_{\nu})} \lesssim \left\|\sup_{j \in \N_{0}} |f_{j}|\right\|_{L^{p}(d\mu_{\nu})}, 
$$
for every sequence $(f_j)$ of measurable functions in $\mathcal{S}'(\R^{d_1+d_2})$. 
\end{proposition}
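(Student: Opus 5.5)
The plan is to reduce the estimate to the vector-valued Fefferman--Stein inequality for a suitable maximal operator on $(\R^{d_1+d_2}, \rho, \mu_\nu)$. That space is only locally doubling, so we will work with a local maximal function.

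\textbf{Step 1 (a single comparison operator).} By Lemma \ref{lem:heat_operator_and_derivative}(i), applied with $t_0 = 2^{-j}$, we have
$$|e^{-t_j(x) G_\nu} f_j(x)| \lesssim e^{-\kappa b 2^{-j} G_\nu}|f_j|(x)$$
uniformly in $j$ and $x$. The implicit constant depends only on $\kappa, \kappa'$, and the pointwise estimate holds for each fixed $x$, so the measurable choice of $t_j(x)$ causes no difficulty. It therefore suffices to prove the inequality with $t_j(x)$ replaced by the fixed times $s_j = \kappa b 2^{-j}$ and $f_j$ replaced by $|f_j|$.

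\textbf{Step 2 (local maximal function).} Next we dominate $e^{-s_j G_\nu}|f_j|(x)$ by a local maximal function. Using \eqref{eq:heat_kernel_with_drift} and \eqref{est:heat_kernel_bounds}, we write
$$H_{s,\nu}(x,y)\, e^{2\nu\cdot y'} \lesssim e^{\nu\cdot(y'-x')}\,|B(x,\sqrt s)|^{-1} e^{-b'\rho(x,y)^2/s}.$$
We decompose into annuli $2^k\sqrt{s} \le \rho(x,y) < 2^{k+1}\sqrt{s}$. On the $k$-th annulus we have $|\nu\cdot(y'-x')| \le |\nu|\rho(x,y) \lesssim 2^k$, because $s \le \kappa b$. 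The Gaussian factor $e^{-b'4^k}$ absorbs both this exponential and the doubling ratio $|B(x,2^{k+1}\sqrt s)|/|B(x,\sqrt s)| \lesssim 2^{kQ}$. Hence
$$e^{-s_jG_\nu}|f_j|(x) \lesssim \sum_k e^{-c4^k} \frac{1}{|B(x,2^{k+1}\sqrt{s_j})|}\int_{B(x,2^{k+1}\sqrt{s_j})} e^{\nu\cdot(y'-x')}|f_j(y)|\,dy.$$
Since $e^{\nu\cdot(y'-x')}$ is comparable to $e^{2\nu\cdot(y'-x')}$ up to a factor $e^{c2^k}$ on that ball, the last average is bounded by $e^{c'2^k}$ times a $\mu_\nu$-average over the same ball.

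For $2^{k+1}\sqrt{s_j} \le R_0$, with $R_0$ fixed, this is dominated by the local Hardy--Littlewood maximal function $M_{R_0}$ with respect to $\mu_\nu$, which is doubling at scales $\le R_0$ by \eqref{main:ball-vol-est}. For the larger balls, which occur only when $k \gtrsim j$, we need a different argument. There the factor $e^{-c4^k}$ is tiny, and we can instead bound the corresponding tail operator $T_j$ directly: its kernel has $L^\infty_y L^1_x$ and $L^\infty_x L^1_y$ norms (with respect to $\mu_\nu$) that are $O(e^{-c4^{j}})$. Summing over $j$ gives a bound through the inequalities $\ell^q \subset \ell^\infty$ and $\|\cdot\|_{\ell^q} \le \sum_j\|\cdot\|$ on that part.

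\textbf{Step 3 (Fefferman--Stein for the local maximal function).} The local maximal operator $M_{R_0}$ satisfies the vector-valued Fefferman--Stein inequality on $L^p(\ell^q)$ for $1<p<\infty$ and $1<q\le\infty$. To see this, cover $\R^{d_1+d_2}$ by a bounded-overlap family of $\rho$-balls of radius $R_0$ with enlargements of radius $3R_0$. On each enlarged ball, $\mu_\nu$ restricted is a doubling measure, with constants uniform because of \eqref{main:ball-vol-est} and the fact that $e^{2\nu\cdot x'}$ varies by a bounded factor there. The classical Fefferman--Stein theorem for spaces of homogeneous type then applies on each piece, and we sum over pieces.

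The case $q=1$ is not covered by Fefferman--Stein. Here we do not pass through maximal functions at all: we simply use $\sum_j e^{-s_jG_\nu}|f_j| = \sum_j \int H_{s_j,\nu}(\cdot,y)|f_j(y)|\,d\mu_\nu(y)$, and duality with $L^{p'}$ together with the bound $\sup_j e^{-s_jG_\nu}g \lesssim M_{R_0}g$ (Step 2 applied to $g\in L^{p'}$, plus the tail estimate) gives the result, as in the Euclidean proof.

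\textbf{Main obstacle.} The delicate point is Step 2 in the presence of the drift. The nonsymmetric factor $e^{\nu\cdot(y'-x')}$ and the exponential volume growth must be controlled using only local doubling. The smallness $s_j\le \kappa b$ is essential here, since it makes the exponential losses $e^{C2^k}$ subordinate to the Gaussian decay $e^{-c4^k}$.
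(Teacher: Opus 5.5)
Your argument is correct. The paper gives no proof of its own and only refers to the proof of \cite[Proposition 3.5]{Bruno_Marco_Vallarino_Besov_TL_20}. Your chain of reductions is a sound, self-contained way to carry it out:
\begin{itemize}
\item Lemma \ref{lem:heat_operator_and_derivative}(i) freezes the times at $s_j=\kappa b2^{-j}$. This is really the kernel comparison $H_{t,\nu}(x,y)\lesssim H_{\kappa b2^{-j},\nu}(x,y)$, uniform in $t\in[\kappa'2^{-j},\kappa2^{-j}]$, so the measurability of $t_j$ is harmless.
\item For $g\ge 0$ you get the pointwise bound $e^{-s_jG_\nu}g\lesssim M_{R_0}g+T_jg$. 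The drift factor $e^{\nu\cdot(y'-x')}$ is absorbed by the Gaussian because $s_j$ is bounded.
\item The local maximal function satisfies the Fefferman--Stein inequality.
\item The case $q=1$ follows by duality.
\end{itemize}

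A few details need adjusting.
\begin{itemize}
\item With $R_0$ fixed and $s_j\sim 2^{-j}$, the tail annuli are those with $2^k\gtrsim R_02^{j/2}$, i.e.\ $k\gtrsim j/2$. The Schur norms of $T_j$ are then $O(e^{-c2^j})$, not $O(e^{-c4^j})$. Only summability in $j$ is used, so nothing breaks. Both Schur bounds and the duality step rely on the symmetry $H_{t,\nu}(x,y)=H_{t,\nu}(y,x)$, which is clear from \eqref{eq:heat_kernel_with_drift}.
\item In Step 3 you claim $\mu_\nu$ restricted to $B(z,3R_0)$ is uniformly doubling, but your justification only addresses the density $e^{2\nu\cdot x'}$. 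It also needs $B(x,r)\cap B(z,3R_0)$ to contain a ball of radius comparable to $\min\{r,R_0\}$. That is an extra geometric input, e.g.\ that the control distance is geodesic.
\item It is simpler to bypass this. For $x\in B(z_i,R_0)$ one has $M_{R_0}g(x)\lesssim M(g\mathbf{1}_{B(z_i,3R_0)})(x)$, where $M$ is the Hardy--Littlewood maximal function of the globally doubling space $(\R^{d_1+d_2},\rho,dx)$. Moreover $e^{2\nu\cdot x'}\sim e^{2\nu\cdot z_i'}$ on $B(z_i,3R_0)$. So the Lebesgue Fefferman--Stein inequality on each piece, plus bounded overlap, gives the $L^p(d\mu_\nu)$ estimate.
\item $j=0$ gives $t_0=1$, just outside the range stated in Lemma \ref{lem:heat_operator_and_derivative}(i). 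Its proof works for any bounded $t_0$, so this is harmless.
\end{itemize}

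For comparison, once your Step 1 is done there is a lighter route that avoids local doubling altogether.
\begin{itemize}
\item For $q=\infty$, $\sup_j e^{-s_jG_\nu}|f_j|\le\sup_{t>0}e^{-tG_\nu}\bigl(\sup_i|f_i|\bigr)$. This is bounded on $L^p(d\mu_\nu)$ by Stein's maximal theorem for the symmetric diffusion semigroup $(e^{-tG_\nu})_{t>0}$.
\item For $q=1$, run your duality argument with the same maximal theorem on $L^{p'}(d\mu_\nu)$.
\item For $1<q<\infty$, interpolate the linear map $(g_j)\mapsto(e^{-s_jG_\nu}g_j)$ between $L^p(\ell^1)$ and $L^p(\ell^\infty)$.
\end{itemize}
Your route costs more, but it gives the stronger pointwise domination by a local maximal operator.
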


\medskip 
Next, following \cite[Proposition 3.6]{Bruno_Marco_Vallarino_Besov_TL_20}, we establish a continuous version of the above proposition with the involvement of the ball volume in an appropriate sense.
\begin{proposition} \label{prop:semigroup_continuous_version_of_Fefferman_Stein_theorem}
Let $1 < p < \infty, \, 1 \leq q \leq \infty, \, \alpha \geq 0, \, c > 0$ and $\eta \in \R$. Then, 
\begin{align*}
& \left\|\left(\int_{0}^1 \left( V_{\nu}(\cdot, t)^{-\alpha/Q}\, V_{\nu}(\cdot, 1)^{\eta} \, e^{-ctG_\nu} |F(t,\cdot)| \right)^{q} \frac{dt}{t} \right)^{1/q} \right\|_{L^p(d\mu_{\nu})} \\
& \quad \lesssim  \left\|\left(\int_{0}^1 \left(V_{\nu}(\cdot, t)^{-\alpha/Q}\, V_{\nu}(\cdot, 1)^{\eta} \, |F(t,\cdot)|\right)^q \frac{dt}{t} \right)^{1/q} \right\|_{L^p(d\mu_{\nu})},   
\end{align*}
for any measurable function $F$ such that $F(t, \cdot) \in \mathcal{S}'(\R^{d_1+d_2})$ for each $t \in (0,1)$, and with obvious modifications when $q = \infty$. 
\end{proposition}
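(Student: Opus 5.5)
The plan is to discretise the $t$-integral dyadically, move the weights $V_{\nu}(\cdot,t)^{-\alpha/Q} V_{\nu}(\cdot,1)^{\eta}$ inside the heat operator with Lemma \ref{lemma:ball_volume_and_heat_operator_commutation}, and then apply the discrete vector-valued inequality of Proposition \ref{prop:semigroup_version_of_Fefferman_Stein_theorem}.

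\textbf{Discretisation.} Split $(0,1)=\bigcup_{j\ge 0} [2^{-j-1},2^{-j})$ and substitute $t = 2^{-j}s$ with $s\in[1/2,1)$. The left-hand side (for $q<\infty$) becomes
\[
\Big\| \Big( \int_{1/2}^{1} \sum_{j\ge 0} \big( V_\nu(\cdot,2^{-j}s)^{-\alpha/Q} V_\nu(\cdot,1)^{\eta}\, e^{-c2^{-j}sG_\nu}|F(2^{-j}s,\cdot)| \big)^q \frac{ds}{s}\Big)^{1/q}\Big\|_{L^p(d\mu_\nu)} .
\]
On the range $t\in[2^{-j-1},2^{-j})$ the local doubling property gives $V_\nu(x,t)\sim V_\nu(x,2^{-j})$, uniformly in $x$ and $j$.

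\textbf{Commuting the weights.} Set $w_j(x)=V_\nu(x,2^{-j})^{-\alpha/Q}V_\nu(x,1)^\eta$. Both weight factors must be moved inside the heat operator. The factor at scale $2^{-j}$ matches the heat time $cs2^{-j}$ up to constants. The second and third inequalities of Lemma \ref{lemma:ball_volume_and_heat_operator_commutation} (applied once with exponent $-\alpha/Q$ at scale $t$, once with exponent $\eta$ at scale $1$, after adjusting constants via local doubling) give the pointwise bound
\[
w_j(x)\, e^{-ctG_\nu}|F(t,\cdot)|(x) \lesssim e^{-2bctG_\nu}\big(w_j\,|F(t,\cdot)|\big)(x).
\]
Applying the lemma twice means the second application acts on $e^{-ctG_\nu}$ rather than on a bare heat operator. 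I would handle this directly at the kernel level, exactly as in the lemma's proof: the polynomial and $e^{C\rho/\sqrt t}$ losses from both weights are absorbed by part of the Gaussian factor $e^{-b'\rho^2/t}$, for $t\le 1$.

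\textbf{Applying the discrete inequality.} For fixed $s$, put $f_j = w_j|F(2^{-j}s,\cdot)|$ and $t_j = 2bcs2^{-j}\in[bc2^{-j},2bc2^{-j}]$. Proposition \ref{prop:semigroup_version_of_Fefferman_Stein_theorem} applies with $\kappa'=bc$ and $\kappa=2bc$. The $s$-integral is then removed as follows.

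If $q\le p$, use Minkowski's inequality in $L^{p/q}$ to pull the $s$-integral outside the norm, apply the discrete proposition for each fixed $s$, then reverse via Fubini.

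If $q>p$, this route fails. The cleaner route, which I expect the paper follows from \cite[Proposition 3.6]{Bruno_Marco_Vallarino_Besov_TL_20}, is to regard $s$ as an extra index. Apply the $\ell^q(L^q(ds))$-valued version of the Fefferman--Stein-type inequality, i.e.\ Proposition \ref{prop:semigroup_version_of_Fefferman_Stein_theorem} with the index set $\N_0\times[1/2,1)$. Its proof uses only the domination $e^{-t_jG_\nu}|f|\lesssim \mathcal{M}f$ together with a vector-valued maximal theorem, and the latter holds for Banach-lattice values such as $\ell^q(L^q)$.

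Alternatively, to use only the stated discrete proposition, approximate the $s$-integral by Riemann sums, or dominate $\sup_{s}$ of the heat operators via Lemma \ref{lem:heat_operator_and_derivative}(i). That part of the lemma gives $e^{-ctG_\nu}|g|\lesssim e^{-\kappa b2^{-j}G_\nu}|g|$ for $t\in[2^{-j-1},2^{-j})$, so the heat time no longer depends on $s$. The $s$-integral can then be taken inside $f_j$ by Minkowski's inequality ($q\ge 1$): $\int |e^{-\tau G}g_s|^q ds \le (e^{-\tau G}(\int|g_s|^q)^{1/q})^q$. This last route is simplest and works for all $p,q$, so I would commit to it. Undoing the dyadic decomposition, and recovering $V_\nu(\cdot,t)$ from $w_j$, gives the right-hand side. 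For $q=\infty$ replace the integrals by suprema.

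\textbf{Main obstacle.} The delicate step is the uniformity of the weight commutation. One needs $V_\nu(x,t)^{-\alpha/Q}\lesssim (1+\rho/\sqrt t)^{\alpha}\,e^{C\rho/\sqrt{t}}\,V_\nu(y,t)^{-\alpha/Q}$, and the $V_\nu(\cdot,1)^\eta$ factor produces $e^{C|\eta|\rho}$. Both must be absorbed by the Gaussian with constants independent of $j$, which holds because $t\le 1$.
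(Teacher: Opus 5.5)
Your committed route matches the paper's proof. It splits into dyadic blocks, uses Lemma \ref{lem:heat_operator_and_derivative}(i) to freeze the heat time on each block, applies Lemma \ref{lemma:ball_volume_and_heat_operator_commutation} once per weight factor to move $V_\nu(\cdot,2^{-j})^{-\alpha/Q}V_\nu(\cdot,1)^{\eta}$ inside, uses Minkowski to pull the block's $t$-integral inside the heat operator, and finishes with Proposition \ref{prop:semigroup_version_of_Fefferman_Stein_theorem}; the only difference is that you swap the time-freezing and weight-commutation steps. The alternative routes you discuss (the $q\le p$ versus $q>p$ split, an $\ell^q(L^q)$-valued maximal theorem) are unnecessary, and iterating the commutation lemma causes no difficulty because it holds for any heat time bounded by a fixed constant.
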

\begin{proof}
We shall prove this proposition only in the case when $1 \leq q < \infty$. Using Lemma \ref{lem:heat_operator_and_derivative} (with $\kappa' = c, \, \kappa = 2c$ and $t_0 = 2^{-j}$), we have
\begin{align*}
& \int_{0}^1 \left( V_{\nu}(\cdot, t)^{-\alpha/Q}\, V_{\nu}(\cdot, 1)^{\eta} \, e^{-ctG_\nu} |F(t,\cdot)| \right)^{q} \frac{dt}{t} \\
& \lesssim \sum_{j=1}^{\infty} \int_{2^{-j}}^{2^{-j+1}} \left(V_{\nu}(\cdot, 2^{-j})^{-\alpha/Q}\, V_{\nu}(\cdot, 1)^{\eta} \, e^{-cb 2^{-j+1}G_\nu} |F(t,\cdot)| \right)^{q} \frac{dt}{t} \\  
& \lesssim \sum_{j=1}^{\infty} \int_{2^{-j}}^{2^{-j+1}} \left(e^{-4cb^{3} 2^{-j+1}G_\nu}\left(V_{\nu}(\cdot, 2^{-j})^{-\alpha/Q}\, V_{\nu}(\cdot, 1)^{\eta} \,  |F(t,\cdot)|\right) \right)^{q} \frac{dt}{t} \\
& \lesssim \sum_{j=1}^{\infty} \left(e^{-4cb^{3} 2^{-j+1}G_\nu} \left( \int_{2^{-j}}^{2^{-j+1}} \left(V_{\nu}(\cdot, 2^{-j})^{-\alpha/Q}\, V_{\nu}(\cdot, 1)^{\eta} \, |F(t,\cdot)| \right)^{q} \frac{dt}{t} \right)^{1/q} \right)^{q}, 
\end{align*} 
where the last step follows from Minkowski's inequality applied in the integral representation of the heat semigroup $e^{-4cb^{3} 2^{-j+1}G_\nu}$. 

\medskip 
With the above estimate, we can use Proposition \ref{prop:semigroup_version_of_Fefferman_Stein_theorem} to conclude that 
\begin{align*}
& \left\| \left(\int_{0}^1 \left(V_{\nu}(\cdot, t)^{-\alpha/Q} \, V_{\nu}(\cdot, 1)^{\eta} \, e^{-ctG_\nu} |F(t,\cdot)| \right)^{q} \frac{dt}{t} \right)^{1/q} \right\|_{L^p(d\mu_{\nu})} \\ 
& \lesssim \left\|\left(\sum_{j=1}^{\infty} \int_{2^{-j}}^{2^{-j+1}} \left(V_{\nu}(\cdot, 2^{-j})^{-\alpha/Q}\, V_{\nu}(\cdot, 1)^{\eta} \, |F(t,\cdot)| \right)^{q} \frac{dt}{t} \right)^{1/q} \right\|_{L^p(d\mu_{\nu})} \\
& \lesssim \left\|\left(\sum_{j=1}^{\infty} \int_{2^{-j}}^{2^{-j+1}} \left(V_{\nu}(\cdot, t)^{-\alpha/Q}\, V_{\nu}(\cdot, 1)^{\eta} \, |F(t,\cdot)| \right)^{q} \frac{dt}{t} \right)^{1/q} \right\|_{L^p(d\mu_{\nu})} \\
& = \left\|\left( \int_{0}^{1} \left( V_{\nu}(\cdot, t)^{-\alpha/Q}\, V_{\nu}(\cdot, 1)^{\eta} \, |F(t,\cdot)| \right)^{q} \frac{dt}{t} \right)^{1/q} \right\|_{L^p(d\mu_{\nu})},
\end{align*} 
and this completes the proof of the proposition.
\end{proof}

Finally, we record the following application of Schur's lemma. For proof, we refer to \cite[Lemma 2.2]{Feneuil_Joseph_Algebra_properties_2018} and \cite[Lemma 4.3]{Bruno_Marco_Vallarino_Besov_TL_20}.
\begin{lemma} \label{lem:Cor_of_Schur_lemma}
Let $\gamma, \eta \in \R$ be such that $0 < \gamma < \eta$, and let $q \in [1, \infty]$. 
\begin{enumerate}[(i)]
\item Let $a, \, b \in \mathbb{Z} \cup \{\pm \infty\}$ be such that $a < b$, then for any sequence $(r_{n})_{n \in \mathbb{Z}} \subset [0, \infty)$, we have
$$
\sum_{j=a}^{b} \left(2^{-j\gamma} \sum_{n=a}^{b} 2^{\min\{n,j\} \eta} r_{n} \right)^{q} \lesssim \sum_{n=a}^{b} (2^{(-\gamma+ \eta)n } \, r_{n})^{q},
$$
with obvious modifications when $q = \infty$.

\medskip 
\item Let $r:(0,1) \to [0, \infty)$ be any function, then
$$
\int_{0}^{1} \left(u^{\gamma} \int_{0}^{1} \frac{1}{(t+u)^{\eta}} r(t) \, \frac{dt}{t} \right)^{q} \frac{du}{u} \lesssim \int_{0}^{1} (t^{\gamma- \eta} r(t))^{q} \, \frac{dt}{t},
$$
with obvious modifications when $q=\infty$.
\end{enumerate}
\end{lemma}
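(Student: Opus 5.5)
The plan is to recognise both parts as convolution inequalities on a group (the additive group $\mathbb{Z}$ in part $(i)$, the multiplicative group $(0,\infty)$ with Haar measure $dt/t$ in part $(ii)$). In each case the kernel is integrable precisely because $0<\gamma<\eta$, and Young's inequality $\ell^1 * \ell^q \to \ell^q$ (resp. $L^1 * L^q \to L^q$), which is Schur's test for a convolution kernel, finishes the argument. This handles all $q\in[1,\infty]$ at once, including $q=\infty$.

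\medskip
For part $(i)$, extend $r_n$ by zero outside $[a,b]$ and set $s_n = 2^{(\eta-\gamma)n} r_n$. We rewrite the inner sum as
$$
2^{-j\gamma}\sum_{n} 2^{\min\{n,j\}\eta} r_n = \sum_n K(j,n)\, s_n, \qquad K(j,n) = 2^{-j\gamma}\,2^{\min\{n,j\}\eta}\,2^{-(\eta-\gamma)n}.
$$
Splitting into the two cases:
\begin{itemize}
\item if $n\le j$, then $K(j,n)=2^{-\gamma(j-n)}$;
\item if $n>j$, then $K(j,n)=2^{-(\eta-\gamma)(n-j)}$.
\end{itemize}
Hence $K(j,n)=k(j-n)$, where $k(m)=2^{-\gamma m}$ for $m\ge0$ and $k(m)=2^{(\eta-\gamma)m}$ for $m<0$. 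Since $\gamma>0$ and $\eta-\gamma>0$, we have $\sum_m k(m)<\infty$. Young's inequality on $\mathbb{Z}$ then gives $\|k*s\|_{\ell^q}\le \|k\|_{\ell^1}\|s\|_{\ell^q}$. Restricting the left-hand sum to $a\le j\le b$ only decreases it, which yields the claim.

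\medskip
For part $(ii)$, extend $r$ by zero on $[1,\infty)$ and put $g(t)=t^{\gamma-\eta}r(t)$. The inner expression becomes
$$
u^\gamma\int_0^\infty (t+u)^{-\eta} r(t)\,\frac{dt}{t} = \int_0^\infty \frac{(u/t)^\gamma}{(1+u/t)^\eta}\, g(t)\,\frac{dt}{t} = (k\ast g)(u),
$$
with $k(s)=s^\gamma(1+s)^{-\eta}$ and convolution taken on the multiplicative group $(0,\infty)$. We have $k(s)\sim s^\gamma$ as $s\to0$ and $k(s)\sim s^{\gamma-\eta}$ as $s\to\infty$, so $\int_0^\infty k(s)\,ds/s<\infty$, again exactly because $0<\gamma<\eta$. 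Young's inequality on this group gives $\|k*g\|_{L^q(du/u)}\lesssim \|g\|_{L^q(dt/t)}$, and restricting the $u$-integral to $(0,1)$ gives the stated bound.

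\medskip
There is no real obstacle here. The only points needing care are verifying that the kernels depend only on $j-n$ (resp. $u/t$) and checking their summability/integrability at both ends, which is where both inequalities $\gamma>0$ and $\eta>\gamma$ are used.
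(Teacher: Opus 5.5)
Your proof is correct and is essentially the paper's route: the paper gives no argument of its own and records the lemma as an application of Schur's lemma, deferring to Feneuil and to Bruno et al.; your observation that the kernels are convolution kernels on $\mathbb{Z}$ and on $((0,\infty), dt/t)$ simply turns Schur's test into Young's inequality with constant $\|k\|_{L^1}$. Your kernel computations are right, namely $k(m)=2^{-\gamma m}$ for $m\ge 0$ and $k(m)=2^{(\eta-\gamma)m}$ for $m<0$ in part (i), and $k(s)=s^{\gamma}(1+s)^{-\eta}$ with $\int_0^\infty k(s)\,ds/s = B(\gamma,\eta-\gamma)<\infty$ in part (ii), and these are exactly where the hypothesis $0<\gamma<\eta$ enters.
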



\section{Some characterisations of Besov and Triebel--Lizorkin spaces} 
\label{sec:spaces-definition-and-characterisation}

In this section, we shall prove several characterisations of the Besov and Triebel--Lizorkin norms. In various of our arguments, we shall make use of the following fundamental decomposition formula whose proof can be written by just repeating the proof of \cite[Lemma 3.1]{Feneuil_Joseph_Algebra_properties_2018}, so we omit the details. 
\begin{lemma} \label{lem:decomposition_of_a_distibution/function}
Let $m \in \N$. For any $\phi \in \mathcal{S}(\R^{d_1 + d_2})$ and $f \in \mathcal{S}'(\R^{d_1 + d_2})$, we have 
\begin{align*}
\phi & = \frac{1}{(m-1)!}\int_{0}^{1} W_{t}^{(m)} \phi \, \frac{dt}{t} + \sum_{k=0}^{m-1} \frac{1}{k!} \, W_{1}^{(k)} \phi, \\  
\text{and} \qquad f & = \frac{1}{(m-1)!}\int_{0}^{1} W_{t}^{(m)} f \, \frac{dt}{t} + \sum_{k=0}^{m-1} \frac{1}{k!} \, W_{1}^{(k)} f,
\end{align*}
with the integrals converging in $\mathcal{S}(\R^{d_1 + d_2})$ and $\mathcal{S}'(\R^{d_1 + d_2})$ respectively. 
\end{lemma}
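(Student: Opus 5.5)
The plan is to derive both identities from a Taylor-type integration by parts in the semigroup variable, and then justify the convergence of the integral. Fix $\phi \in \mathcal{S}(\R^{d_1+d_2})$ and $0<\varepsilon<1$. For $k \geq 1$ we have
$$
\frac{d}{dt}\Big( \frac{1}{k!} W_t^{(k)}\phi \Big) = \frac{1}{(k-1)!}\, \frac{1}{t}\, W_t^{(k)}\phi - \frac{1}{k!}\,\frac{1}{t}\, W_t^{(k+1)}\phi ,
$$
while $\frac{d}{dt} e^{-tG_\nu}\phi = -\frac{1}{t} W_t^{(1)}\phi$. Summing over $k=0,\dots,m-1$, the sum telescopes to
$$
\frac{d}{dt}\sum_{k=0}^{m-1}\frac{1}{k!}W_t^{(k)}\phi = -\frac{1}{(m-1)!}\,\frac{1}{t}\,W_t^{(m)}\phi .
$$
Integrating over $[\varepsilon,1]$ then gives
$$
\sum_{k=0}^{m-1}\frac{1}{k!}W_\varepsilon^{(k)}\phi = \frac{1}{(m-1)!}\int_\varepsilon^1 W_t^{(m)}\phi\,\frac{dt}{t} + \sum_{k=0}^{m-1}\frac{1}{k!}W_1^{(k)}\phi .
$$
These manipulations are legitimate because $\mathcal{S}$ is invariant under $G_\nu$ and $e^{-tG_\nu}$, with $t\mapsto e^{-tG_\nu}\phi$ differentiable in the topology of $\mathcal{S}$ for $t>0$. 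This invariance will be checked via the kernel formula \eqref{eq:heat_kernel_with_drift} together with the derivative bounds \eqref{est:heat_kernel_gradient_bounds}. The weight $e^{k|x'|}$ is harmless because $e^{-\nu\cdot(x'+y')}$ and the Gaussian factor absorb exponential weights when $t\le 1$; one compares $e^{k|x'|}$ with $e^{k|y'|}e^{k\rho(x,y)}$ using $|x'-y'|\le\rho(x,y)$.

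Next I will let $\varepsilon\to0$ in $\mathcal{S}$. For $k\ge1$, $W_\varepsilon^{(k)}\phi = \varepsilon^k e^{-\varepsilon G_\nu}(G_\nu^k\phi)$. Hence it suffices to show two things: $e^{-\varepsilon G_\nu}\psi\to\psi$ in $\mathcal{S}$ for every $\psi\in\mathcal{S}$, and $\{e^{-\varepsilon G_\nu}\}_{\varepsilon\le1}$ is equibounded on $\mathcal{S}$. Given these, $W_\varepsilon^{(k)}\phi\to0$ for $k\ge1$ and the left side tends to $\phi$.

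For the two facts, I will bound $e^{k|x'|}X^\gamma e^{-\varepsilon G_\nu}\psi$. The main obstacle is that $X^\gamma$ does not commute with $G_\nu$, since $[X_j, X_{j,k}]\neq0$. I will therefore avoid moving derivatives onto $\psi$ through commutators. Instead I will write $X^\gamma e^{-\varepsilon G_\nu}\psi-X^\gamma\psi = -\int_0^\varepsilon X^\gamma e^{-sG_\nu}G_\nu\psi\,ds$. I will then control $X^\gamma e^{-sG_\nu}$ applied to Schwartz data uniformly in $s$, by a Taylor expansion of $\psi$ at $x$ inside the kernel integral. This uses the Gaussian bounds and the fact that the Grushin vector fields have polynomial coefficients, so commutators of $X^\gamma$ with $G_\nu$ are finite combinations of $X^\beta$ with polynomial weights in $x'$, which $\mathcal{S}$ tolerates. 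The same bound applied to $\int_0^1 W^{(m)}_t\phi\,dt/t = \int_0^1 t^{m-1}e^{-tG_\nu}G_\nu^m\phi\,dt$ shows the integral converges absolutely in $\mathcal{S}$, since $m\ge1$ makes the integrand bounded uniformly in $t$.

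The distributional identity then follows by duality. $G_\nu$ is symmetric on $L^2(d\mu_\nu)$, and $W_t^{(k)}$ is defined on $\mathcal{S}'$ as the transpose acting on $\mathcal{S}$ with respect to the pairing of $d\mu_\nu$, the density $e^{2\nu\cdot x'}$ being absorbed by the test-function weights. Pairing the first identity for $\phi$ with $f$ gives the second, and weak-$*$ convergence of the integral in $\mathcal{S}'$ follows from the $\mathcal{S}$-convergence just proved.
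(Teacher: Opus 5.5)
Your outline is the right one, and it is what the paper's appeal to Feneuil's Lemma 3.1 amounts to. The telescoping identity $\frac{d}{dt}\sum_{k=0}^{m-1}\frac{1}{k!}W_t^{(k)}\phi=-\frac{1}{(m-1)!}\frac{1}{t}W_t^{(m)}\phi$ is correct. Reducing $e^{-\varepsilon G_\nu}\psi\to\psi$ in $\mathcal{S}$ to equiboundedness via $e^{-\varepsilon G_\nu}\psi-\psi=-\int_0^\varepsilon e^{-sG_\nu}G_\nu\psi\,ds$ is also correct, and the transposition to $\mathcal{S}'$ is fine.

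The gap is the only non-routine input, the uniform estimate
\[
\sup_{0<t\le 1}\,\sup_{x}\, e^{k|x'|}\,\bigl|X^{\gamma}e^{-tG_\nu}\psi(x)\bigr| \lesssim C_{k,\gamma}(\psi),
\]
with $C_{k,\gamma}(\psi)$ a finite combination of the seminorms in \eqref{def:test_functions}. What you describe does not establish it:
\begin{itemize}
\item The bounds \eqref{est:heat_kernel_gradient_bounds} only give a factor $t^{-|\gamma|/2}$.
\item Suppose you Taylor-expand $\psi$ at $x$ inside the kernel integral. The terms of order $|\alpha|<|\gamma|$, namely $\frac{1}{\alpha!}\partial^\alpha\psi(x)\int X^\gamma_x H_{t,\nu}(x,y)\,(y-x)^\alpha\,d\mu_\nu(y)$, are only $O(t^{(|\alpha|-|\gamma|)/2})$ by the Gaussian bounds. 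Their boundedness rests on exact cancellations: one would have to compute $e^{-tG_\nu}$ on polynomials, using that $G_\nu$ strictly lowers their homogeneous degree. You only gesture at this.
\item The commutator remark does not close an induction either. We have $[X_j,G_\nu]=-2x_j'\Delta_{x''}=-2\sum_k X_{j,k}[X_j,X_{j,k}]$, which has the same homogeneous order as $X_jG_\nu$. So commuting the $X$'s past $G_\nu$ gains nothing.
\end{itemize}

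The missing idea is to use that $G_\nu$ is translation invariant in $x''$. Its coefficients do not depend on $x''$, so the kernel $H_{t,\nu}(x,y)$ depends on $x''-y''$ and $\partial_{x''_k}$ commutes with $e^{-tG_\nu}$ on $\mathcal{S}$. Meanwhile $[G_\nu,\partial_{x'_j}]=2x'_j\Delta_{x''}$. Duhamel's formula then gives
\[
\partial_{x'_j}e^{-tG_\nu}\psi = e^{-tG_\nu}\partial_{x'_j}\psi + 2\int_0^t e^{-(t-r)G_\nu}\big(x'_j\,e^{-rG_\nu}\Delta_{x''}\psi\big)\,dr .
\]
The same holds for $\partial_{x'}^{\alpha'}$. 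There $[G_\nu,\partial_{x'}^{\alpha'}]=[\partial_{x'}^{\alpha'},|x'|^2]\,\Delta_{x''}$ involves strictly fewer $x'$-derivatives, coefficients of degree at most one in $x'$, and a $\Delta_{x''}$ that can be moved onto $\psi$.

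You also need the uniform weighted bound $\sup_{0<t\le 1}\sup_x e^{k|x'|}|e^{-tG_\nu}g(x)|\lesssim_k\sup_x e^{k|x'|}|g(x)|$. It follows from \eqref{eq:heat_kernel_with_drift}, \eqref{est:heat_kernel_bounds} and $|x'-y'|\lesssim\rho(x,y)$, since the factor $e^{(k+|\nu|)|x'-y'|}$ is absorbed by the Gaussian when $t\le 1$. With this, induction on $|\alpha'|$ yields equiboundedness for the seminorms $\sup_x e^{k|x'|}|\partial^\alpha\psi(x)|$. These define the same topology as the seminorms in \eqref{def:test_functions}, because $X_{j,k}=x'_j\partial_{x''_k}$ has coefficients linear in $x'$ and $\partial_{x''_k}=[X_j,X_{j,k}]$.

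With this lemma in hand, the rest of your argument goes through as written, including the absolute convergence of $\int_0^1 t^{m-1}e^{-tG_\nu}G_\nu^m\phi\,dt$ in $\mathcal{S}$.
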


\subsection{Independence of parameters}
\label{subsec:independence_of_parameters}
We have the following results concerning the independence of certain parameters in the definitions of Besov and Triebel--Lizorkin norms. 
\begin{theorem} \label{thm:independence_of_parameters_Besov}
Let $1 \leq p, \, q \leq \infty, \, \eta \in \mathbb{R}$ and $\alpha > 0$. 
Then, the classical Besov norm $\|f\|_{B^{p, q}_{\alpha, \eta}(d\mu_{\nu})}$ is equivalent to 
\begin{equation*} 
\left(\int_{0}^{1} \|t^{-\alpha/2} \, V_{\nu}^{\eta} \, |W_{t}^{(m)} f|\|^{q}_{L^{p}(d\mu_{\nu})} \, \frac{dt}{t} \right)^{1/q} + \|V_{\nu}^{\eta} \, e^{-t_0 G_{\nu}} f\|_{L^{p}(d\mu_{\nu})}, 
\end{equation*} 
and the non-classical Besov norm $\|f\|_{\tilde{B}^{p,q}_{\alpha,\eta}(d\mu_{\nu})}$ is equivalent to 
\begin{equation*} 
\left(\int_{0}^{1} \|V_{\nu}(t)^{-\alpha/Q} \, V_{\nu}^{\eta} \, |W_{t}^{(m)} f|\|^{q}_{L^{p}(d\mu_{\nu})} \, \frac{dt}{t} \right)^{1/q} + \|V_{\nu}^{-\alpha/Q+\eta} e^{-t_0 G_{\nu}} f\|_{L^{p}(d\mu_{\nu})}, 
\end{equation*} 
for any $0 \leq t_0 < 1$ and $m \in \mathbb{N}$ such that $m > \alpha/2$, with appropriate modifications for $q = \infty$. Also, when $\alpha = 0$, same equivalence holds with $t_0 \in (0,1)$.
\end{theorem}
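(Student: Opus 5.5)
It suffices to show that for any two admissible pairs $(m,t_0)$ and $(m',t_0')$ the corresponding quantities are mutually dominated. We reduce to two moves: changing $m$ with $t_0$ fixed, and changing $t_0$ with $m$ fixed. We write the argument for the classical norm. The non-classical case is identical once $t^{-\alpha/2}$ is replaced by $V_{\nu}(t)^{-\alpha/Q}$, using that $V_\nu(x,t)\sim V_\nu(x,s)$ for $t\sim s$ and $V_\nu(x,u)^{-\alpha/Q}\lesssim (t/u)^{\alpha/2}V_\nu(x,t)^{-\alpha/Q}$ for $u\le t\le 1$, which follows from local doubling. The weights $V_\nu^{\eta}$ and $V_\nu(t)^{s}$ are moved across heat operators by Lemma \ref{lemma:ball_volume_and_heat_operator_commutation}, at the cost of replacing $e^{-tG_\nu}$ by $e^{-2btG_\nu}$; these are $L^p(d\mu_\nu)$-bounded uniformly for $t\le C$.

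\textbf{Step 1 (changing $m$).} Let $m,m'>\alpha/2$ with $m<m'$.
\begin{itemize}
\item Upper bound by $m$: write $W^{(m')}_t=(tG_\nu)^{m'-m}e^{-\frac t2G_\nu}W^{(m)}_{t/2}\cdot 2^{m}$. Lemma \ref{lem:heat_operator_and_derivative}(ii) gives $|W^{(m')}_tf|\lesssim e^{-ctG_\nu}|W^{(m)}_{t/2}f|$. Commuting the weight with Lemma \ref{lemma:ball_volume_and_heat_operator_commutation}, taking $L^p$ norms and substituting $t\mapsto t/2$ bounds the $m'$-integral by the $m$-integral. The piece over $t\in(1/2,1)$ is controlled by the low-frequency term, see Step 3.
\item Upper bound by $m'$: use Lemma \ref{lem:decomposition_of_a_distibution/function} with $m'-m$ in place of $m$, applied to $W^{(m)}_t f$ at scale $t$. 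Concretely,
\[W^{(m)}_tf=c\int_0^1 (tG_\nu)^m e^{-tG_\nu}(sG_\nu)^{m'-m}e^{-sG_\nu}f\,\frac{ds}{s}+\text{(terms at }s=1).\]
The integrand equals $\frac{t^m s^{m'-m}}{(t+s)^{m'}}\,W^{(m')}_{t+s}f$. The factor $\frac{t^m s^{m'-m}}{(t+s)^{m'}}$ is bounded by $\min(t,s)^{\cdot}$-type quantities, exactly the kernel of Lemma \ref{lem:Cor_of_Schur_lemma}(ii) with $\gamma=m-\alpha/2>0$ and $\eta=m$. Pass $W^{(m')}_{t+s}$ to $W^{(m')}_{s'}$ with $s'\in(0,1)$ via a semigroup splitting $e^{-(t+s)G_\nu}=e^{-tG_\nu}e^{-sG_\nu}$. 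Then Minkowski in $L^p$ followed by Lemma \ref{lem:Cor_of_Schur_lemma}(ii) gives the bound. The boundary terms are $W^{(k)}_1$-type and are handled in Step 3.
\end{itemize}

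\textbf{Step 2 (changing $t_0$ and the low-frequency term).} For $0<t_0<t_1<1$ we have $e^{-t_1G_\nu}=e^{-(t_1-t_0)G_\nu}e^{-t_0G_\nu}$. Lemma \ref{lemma:ball_volume_and_heat_operator_commutation} then gives $\|V_\nu^\eta e^{-t_1G_\nu}f\|_p\lesssim\|V_\nu^\eta e^{-t_0G_\nu}f\|_p$. For the reverse direction, and for $t_0=0$, write
\[e^{-t_0G_\nu}f=e^{-t_1G_\nu}f+\int_{t_0}^{t_1}G_\nu e^{-sG_\nu}f\,ds=e^{-t_1G_\nu}f+\int_{t_0}^{t_1}W^{(1)}_sf\,\frac{ds}{s}.\]
Express $W^{(1)}_s$ through $W^{(m)}_{s/2}$ and heat semigroups using the identity in Lemma \ref{lem:decomposition_of_a_distibution/function}. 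Then bound $\int_{t_0}^{t_1}\|V_\nu^\eta W^{(m)}_{s}f\|_p\,ds/s$ by the $q$-integral via Hölder, using $s^{-\alpha/2}\ge1$. This is exactly where $\alpha>0$ enters when $t_0=0$: for $t_0=0$ and $m=1$ the kernel is integrable only because $\int_0 s^{\alpha/2}\cdot s^{-\alpha/2}(\dots)$ leaves a gain $s^{\alpha/2}$ after Hölder. For $\alpha=0$ we therefore restrict to $t_0>0$, where the range is compact.

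\textbf{Step 3 (bookkeeping for large $t$ and the $W^{(k)}_1$ terms).} Terms such as $W^{(k)}_1f$ and $W^{(m)}_tf$ with $t\in[1/2,1]$ are written as $(tG_\nu)^k e^{-(t-t_0)G_\nu}\,e^{-t_0G_\nu}f$. For $t-t_0$ bounded below, Lemma \ref{lem:heat_operator_and_derivative}(ii) and Lemma \ref{lemma:ball_volume_and_heat_operator_commutation} control these by $\|V_\nu^\eta e^{-t_0G_\nu}f\|_p$, possibly after first passing to a smaller $t_0$ via Step 2.

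\textbf{Main obstacle.} The main obstacle is the second bullet of Step 1. There one must carefully match the Schur kernel to Lemma \ref{lem:Cor_of_Schur_lemma}(ii) while carrying the non-commuting weights $V_\nu^\eta$ and, in the non-classical case, $V_\nu(t)^{-\alpha/Q}$ through $e^{-tG_\nu}$. The commutation lemma requires the scales to be comparable, so I would first discretise into dyadic pieces and use Lemma \ref{lem:Cor_of_Schur_lemma}(i) if the continuous version becomes awkward.
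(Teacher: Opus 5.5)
Your proposal is correct and follows the paper's overall skeleton: raise $m$, then lower $m$ at the cost of a low-frequency term, then move $t_0$. The last move is done differently.

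\textbf{Changing $m$.} Your first bullet is the paper's Step 1. For $m'=m+1$, your reproducing identity coincides with the paper's $W_t^{(m)}f=W_t^{(m)}e^{-G_\nu}f+\int_0^1 t^{-1}W_t^{(m+1)}e^{-sG_\nu}f\,ds$. You simply treat general $m'$ at once instead of iterating.

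\textbf{Changing $t_0$.} The paper applies Lemma \ref{lem:decomposition_of_a_distibution/function} directly to $e^{-t_0G_\nu}f$. This yields the $W^{(m)}$-integral immediately, with boundary terms $G_\nu^k e^{-(t_0+\frac12)G_\nu}(e^{-\frac12 G_\nu}f)$, and it treats both directions symmetrically. For $\alpha=0$ it uses $m+1$ in the formula, so that $e^{-t_0G_\nu}$ absorbs one $G_\nu$ and produces a factor $t$.

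Your fundamental-theorem-of-calculus route on $[t_0,t_1]$ gives the monotone direction for free, but it produces $W^{(1)}$ rather than $W^{(m)}$. $W^{(1)}_s$ cannot literally be expressed through $W^{(m)}_{s/2}$. You need the reproducing formula over an auxiliary scale, with boundary terms $s\,G_\nu^{k+1}e^{-(1+s)G_\nu}f$ controlled by the $t_1$-term. In effect this is your own second bullet rerun with $\alpha=0$, $q=1$, keeping the range inside $[t_0,t_1]$ when $\alpha=0$. It works, but the paper's route is shorter.

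\textbf{Details to fix.}
\begin{enumerate}
\item In the second bullet, split at $s=t$ and take $s'=\max\{s,t\}$. For $s<t$, put $e^{-sG_\nu}$ on $W^{(m')}_tf$; this gives a diagonal term with weight $(s/t)^{m'-m}$. For $s\ge t$, put $e^{-tG_\nu}$ on $W^{(m')}_sf$; this gives the kernel $(t/s)^m$. The opposite assignments give kernels $(t/s)^m$ on $\{s<t\}$ or $(s/t)^{m'-m}$ on $\{s\ge t\}$, which are not Schur-bounded.
\item Lemma \ref{lem:Cor_of_Schur_lemma}(ii) requires $\gamma<\eta$, and your choice $\gamma=m-\alpha/2$, $\eta=m$ fails this when $\alpha=0$, a case the statement covers. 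Instead, run Schur's test directly on the one-sided kernel $(t/s)^{m}(V_\nu(s)/V_\nu(t))^{\alpha/Q}\chi_{\{s\ge t\}}$ (classically $(t/s)^{m-\alpha/2}\chi_{\{s\ge t\}}$), as the paper does.
\item The non-classical low-frequency step is not ``identical'' to the classical one. The gain $s^{(d_1+d_2)\alpha/(2Q)}$ needed before H\"older comes from the reverse doubling bound $V_\nu(x,s)\lesssim s^{(d_1+d_2)/2}V_\nu(x,1)$ for $s\le 1$. It does not come from the doubling inequality you list, which goes the other way.
\end{enumerate}
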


\begin{theorem} \label{thm:independence_of_parameters_Triebel_Lizorkin}
Let $1 < p < \infty, \, 1 \leq  q \leq \infty, \, \eta \in \mathbb{R}$ and $\alpha > 0$. Then, the classical Triebel--Lizorkin norm $\|f\|_{F^{p, q}_{\alpha, \eta}(d\mu_{\nu})}$ is equivalent to 
\begin{equation*} 
\left\|\left(\int_{0}^{1} |t^{-\alpha/2} \, V_{\nu}^{\eta} \, W_{t}^{(m)} f|^{q} \, \frac{dt}{t} \right)^{1/q} \right\|_{L^{p}(d\mu_{\nu})} + \|V_{\nu}^{\eta} \, e^{-t_0 G_{\nu}} f\|_{L^{p}(d\mu_{\nu})}, 
\end{equation*}
and the non-classical Triebel--Lizorkin norm $\|f\|_{\tilde{F}^{p,q}_{\alpha,\eta}(d\mu_{\nu})}$ is equivalent to 
\begin{equation*} 
\left\|\left(\int_{0}^{1} |V_{\nu}(t)^{-\alpha/Q} \, V_{\nu}^{\eta} \, W_{t}^{(m)} f|^{q} \, \frac{dt}{t} \right)^{1/q} \right\|_{L^{p}(d\mu_{\nu})} + \|V_{\nu}^{-\alpha/Q+\eta} e^{-t_0 G_{\nu}} f\|_{L^{p}(d\mu_{\nu})}, 
\end{equation*} 
for any $0 \leq t_0 < 1$ and $m \in \N$ such that $m > \alpha/2$, with appropriate modifications for $q = \infty$. Also, when $\alpha = 0$, same equivalence holds with $t_0 \in (0,1)$.   
\end{theorem}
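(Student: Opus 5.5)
We will follow the scheme of \cite[Theorem 4.2]{Bruno_Marco_Vallarino_Besov_TL_20}, which is the Triebel--Lizorkin analogue of Theorem \ref{thm:independence_of_parameters_Besov}. Denote by $N_{m,t_0}(f)$ the expression in the statement, so that the $F$-norm is $N_{[\alpha/2]+1,1/2}(f)$. It suffices to show that $N_{m,t_0}(f)\lesssim N_{m',t_0'}(f)$ for any two admissible pairs $(m,t_0)$, $(m',t_0')$. We treat the integral part and the low-frequency part separately. The classical and non-classical cases are handled simultaneously by writing the weight as $w(t,x)=t^{-\alpha/2}V_\nu(x,1)^\eta$ or $w(t,x)=V_\nu(x,t)^{-\alpha/Q}V_\nu(x,1)^\eta$. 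The only properties of $w$ we need are the local doubling comparisons $w(s,\cdot)\lesssim (t/s)^{\alpha/2}w(t,\cdot)$ for $0<s\le t\le 1$, and $w(t,\cdot)\sim w(u,\cdot)$ for $t\sim u$.

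\textbf{Step 1 (changing $m$ in the integral part).} We first compare $W_t^{(m)}$ with $W_t^{(m')}$. For $m>m'$, we have $W^{(m)}_t f = c\,(tG_\nu)^{m-m'}e^{-\frac t2 G_\nu}W^{(m')}_{t/2}f$. By Lemma \ref{lem:heat_operator_and_derivative}(ii) this gives the pointwise bound $|W^{(m)}_t f|\lesssim e^{-ctG_\nu}|W^{(m')}_{t/2}f|$. Proposition \ref{prop:semigroup_continuous_version_of_Fefferman_Stein_theorem} then removes the heat operator in $L^p(\ell^q)$ norm; here we move $w$ inside via Lemma \ref{lemma:ball_volume_and_heat_operator_commutation}. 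A change of variables $t\mapsto t/2$ completes this direction.

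The reverse direction ($m<m'$) is the main work. We insert the reproducing formula of Lemma \ref{lem:decomposition_of_a_distibution/function} with parameter $M$ large, applied to $W_t^{(m)}f$. This gives
\[
W^{(m)}_t f=\tfrac{1}{(M-1)!}\int_0^1 W^{(m)}_tW^{(M)}_s f\,\tfrac{ds}{s}+\sum_{k<M}\tfrac{1}{k!}W^{(m)}_tW^{(k)}_1f .
\]
We choose $M$ so that $M\ge m'$ and $M+m>\alpha/2$. The composite $W^{(m)}_tW^{(M)}_s$ equals $\frac{t^ms^M}{(t+s)^{m+M}}W^{(m+M)}_{t+s}$. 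Writing $W^{(m+M)}_{t+s}=(\cdot)\,W^{(m')}_{(t+s)/2}$, Lemma \ref{lem:heat_operator_and_derivative} yields pointwise domination by
\[
\int_0^1 \frac{t^m s^M}{(t+s)^{m+M}}\,e^{-c(t+s)G_\nu}|W^{(m')}_{s'} f|\,\frac{ds}{s}, \qquad s'\sim t+s,
\]
after a harmless reparametrization. Multiplying by $w(t,\cdot)$ and using the doubling comparison of $w(t)$ with $w(s)$, we get a kernel of exactly the form handled by Lemma \ref{lem:Cor_of_Schur_lemma}(ii). This requires $0<\gamma<\eta$ with $\gamma=m-\alpha/2$ (or in the other range, by splitting $s<t$ and $s>t$). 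Schur's bound is applied pointwise in $x$ inside the $L^q(dt/t)$ norm, before the $L^p$ norm. The heat operators are then removed by Proposition \ref{prop:semigroup_continuous_version_of_Fefferman_Stein_theorem}.

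The boundary terms $W^{(m)}_tW^{(k)}_1 f$ are controlled by $e^{-t_0G_\nu}f$. This uses the identity $W^{(m)}_tW^{(k)}_1=t^m(\cdots)e^{-(1+t-t_0)G_\nu}e^{-t_0G_\nu}$ together with Lemma \ref{lem:heat_operator_and_derivative}. Since $\alpha>0$ (or $m>0$), the factor $t^m w(t)$ is integrable in $dt/t$ near $0$. Lemma \ref{lemma:ball_volume_and_heat_operator_commutation} handles the weight, and $L^p$-boundedness of the heat semigroup handles the norm.

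\textbf{Step 2 (changing $t_0$).} We show that $\|V_\nu^{\eta'}e^{-t_0G_\nu}f\|_p$ is controlled by $N_{m',t_0'}(f)$, where $\eta'$ is the relevant weight exponent. If $t_0\ge t_0'$, we write $e^{-t_0G_\nu}=e^{-(t_0-t_0')G_\nu}e^{-t_0'G_\nu}$ and use Lemma \ref{lemma:ball_volume_and_heat_operator_commutation} with the $L^p$ bound. If $t_0<t_0'$ (including $t_0=0$, for which $\alpha>0$ is needed), we apply Lemma \ref{lem:decomposition_of_a_distibution/function} to $e^{-t_0 G_\nu}f$. This gives
\[
e^{-t_0G_\nu}f = c\int_0^1 e^{-t_0G_\nu}W_t^{(m')}f\,\tfrac{dt}{t}+\sum_k\tfrac{1}{k!}e^{-t_0 G_\nu}W_1^{(k)}f .
\]
In the last sum, $W_1^{(k)}$ contains $e^{-G_\nu}$, which factors through $e^{-t_0'G_\nu}$. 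In the integral, we bound $\|w(1)\cdots\|_p\le\int_0^1\|w(1)W^{(m')}_tf\|_p\,dt/t$. Since $w(1)\lesssim t^{\epsilon}w(t)$ with $\epsilon=\alpha/2>0$, Hölder in $t$ together with the pointwise inequality $\int_0^1\le(\int_0^1 (\cdot)^q dt/t)^{1/q}\cdot C$ (inside $L^p$) bounds this by the $F$-part. For $\alpha=0$ this step fails at $t_0=0$, which is why $t_0>0$ is then required. In that case we only compare positive times, where the semigroup commutation is directly available.

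\textbf{Main obstacle.} We expect the key difficulty to be the reverse direction of Step 1: passing from small $m$ to larger $m'$ (equivalently, from $m'$ to $m$) in the $L^p(L^q)$ norm. This needs the Schur-type Lemma \ref{lem:Cor_of_Schur_lemma}(ii) to be applied pointwise, combined with the vector-valued Proposition \ref{prop:semigroup_continuous_version_of_Fefferman_Stein_theorem}. The latter is where the restriction $1<p<\infty$ enters. Care is needed that the time-dependent weight $V_\nu(t)^{-\alpha/Q}$ can be moved across the heat operators $e^{-c(t+s)G_\nu}$ with $s$ ranging over $(0,1)$. For this we use Lemma \ref{lemma:ball_volume_and_heat_operator_commutation} at scale $t+s\le 2$ together with the local doubling comparison $V_\nu(t+s)\lesssim ((t+s)/t)^{Q/2}V_\nu(t)$. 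The resulting polynomial loss is absorbed by choosing $M$ large.
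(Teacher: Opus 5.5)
\textbf{The gap: the clause $\alpha=0$ with $0<t_0<t_0'$.} Here you must bound $\|V_\nu^{\eta}e^{-t_0G_\nu}f\|_{L^p(d\mu_\nu)}$ by the $F$-part plus $\|V_\nu^{\eta}e^{-t_0'G_\nu}f\|_{L^p(d\mu_\nu)}$. That goes backwards in time, so the semigroup law gives nothing. Lemma~\ref{lemma:ball_volume_and_heat_operator_commutation} only moves weights and produces no decay in $t$. With $\alpha=0$ your decomposition leaves $\int_0^1 V_\nu^{\eta}|W^{(m')}_tf|\,\frac{dt}{t}$ with no factor $t^{\epsilon}$, and this is not controlled by the $L^q(dt/t)$-norm when $q>1$. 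So the sentence ``we only compare positive times, where the semigroup commutation is directly available'' does not amount to a proof.

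The missing idea, which is what the paper does, is to use Lemma~\ref{lem:decomposition_of_a_distibution/function} with $m'+1$ in place of $m'$ and to exploit $t_0>0$. One has $e^{-t_0G_\nu}W^{(m'+1)}_tf=(tG_\nu e^{-t_0G_\nu})W^{(m')}_tf$, and $|tG_\nu e^{-t_0G_\nu}h|\lesssim (t/t_0)\,e^{-bt_0G_\nu}|h|$ by Lemma~\ref{lem:heat_operator_and_derivative}. After moving $V_\nu^\eta$ inside with Lemma~\ref{lemma:ball_volume_and_heat_operator_commutation}, this factor $t$ is exactly what makes the pointwise H\"older step in $t$ work. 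The boundary terms $e^{-t_0G_\nu}W^{(k)}_1f$, $k\le m'$, factor through $e^{-t_0'G_\nu}$ as in your argument.

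\textbf{The case $\alpha>0$.} Here your argument is sound and is essentially the paper's proof. Your one-shot use of the reproducing formula with general $M$ and the substitution $u=(t+s)/2$ repackages the paper's iteration $m\to m+1$, which uses only $M=1$ and splits $s<t$, $s\ge t$.
\begin{itemize}
\item After the substitution the kernel is $\lesssim (t/u)^{m-\alpha/2}\chi_{\{u\ge t/2\}}$. So it is $m>\alpha/2$, not a large $M$, that absorbs the weight loss when $s\gg t$.
\item Proposition~\ref{prop:semigroup_continuous_version_of_Fefferman_Stein_theorem} removes $e^{-c(t+s)G_\nu}$ directly, since its time is comparable to $u$. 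No commutation of $V_\nu(t)^{-\alpha/Q}$ across that operator is needed.
\end{itemize}

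\textbf{Two slips in your Step 2.}
\begin{itemize}
\item The bound $w(1)\lesssim t^{\epsilon}w(t)$ does not follow from the doubling comparison you listed, which gives the reverse inequality. It needs the reverse-doubling bound $V_\nu(x,t)\lesssim t^{(d_1+d_2)/2}V_\nu(x,1)$. In the non-classical case this gives $\epsilon=\alpha(d_1+d_2)/(2Q)$, not $\alpha/2$: for large $|x'|$ one has $V_\nu(x,t)/V_\nu(x,1)\sim t^{(d_1+d_2)/2}$, and $d_1+d_2<Q$.
\item Drop the Minkowski step $\|\cdot\|_{L^p}\le\int_0^1\|\cdot\|_{L^p}\,\frac{dt}{t}$. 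It yields a Besov-type quantity that is not directly bounded by the Triebel--Lizorkin norm. H\"older in $t$ must be applied pointwise in $x$, before taking the $L^p$-norm, as you also indicate.
\end{itemize}
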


We shall only prove Theorem \ref{thm:independence_of_parameters_Triebel_Lizorkin} as Theorem \ref{thm:independence_of_parameters_Besov} can be proved following the same ideas. In fact, the computations in the proof of Theorem \ref{thm:independence_of_parameters_Besov} are less involved than those in Theorem \ref{thm:independence_of_parameters_Triebel_Lizorkin}. 

\begin{proof} We shall only prove the non-classical case as the classical case is similar. Also, we shall write the detailed arguments only for $1 \leq q < \infty$, and it will become clear from the proof that the treatment of $q = \infty$ case is the same, with only the routine changes. The proof will be done in three steps. 

\medskip 
\textbf{\underline{Step 1}:} In this step, we record the following estimate: 
\begin{align*}
& \left\|\left(\int_{0}^{1} (V_{\nu}(t)^{-\alpha/Q} \, V_{\nu}^{\eta} \, |W_t^{(m+1)}f|)^{q} \, \frac{dt}{t}\right)^{\frac{1}{q}} \right\|_{L^p(d\mu_{\nu})} \\
& \lesssim \left\|\left( \int_{0}^{1} (V_{\nu}(t)^{-\alpha/Q} \, V_{\nu}^{\eta} \, |W_t^{(m)}f|)^{q} \, \frac{dt}{t}\right)^{\frac{1}{q}} \right\|_{L^p(d\mu_{\nu})}, 
\end{align*}
which follows immediately from Proposition \ref{prop:semigroup_continuous_version_of_Fefferman_Stein_theorem} once we have $|W_t^{(m+1)}f| \lesssim e^{-\frac{b}{2} tG_\nu} |W_{t/2}^{(m)} f|$ and the latter can be shown to hold true with the help of Lemma \ref{lem:heat_operator_and_derivative} as follows:  
\begin{align*}
|W_t^{(m+1)}f| = |(tG_\nu)^{m+1} \, e^{-t G_\nu} f| 
\lesssim |(t G_\nu) \, e^{-\frac{t}{2} G_\nu}(W_{t/2}^{(m)} f)| \lesssim e^{-\frac{b}{2} tG_\nu} |W_{t/2}^{(m)} f|. 
\end{align*}

\medskip 
\textbf{\underline{Step 2}:} In this step, we shall show that  
\begin{align*}
& \left\|\left( \int_{0}^{1} (V_{\nu}(t)^{-\alpha/Q} \, V_{\nu}^{\eta} \, |W_t^{(m)}f|)^q \, \frac{dt}{t}\right)^{\frac{1}{q}} \right\|_{L^p(d\mu_{\nu})} \\
& \lesssim \left\|\left( \int_{0}^{1} (V_{\nu}(t)^{-\alpha/Q} \, V_{\nu}^{\eta} \,  |W_t^{(m+1)}f|)^q \, \frac{dt}{t}\right)^{\frac{1}{q}} \right\|_{L^p(d\mu_{\nu})} + \|V_{\nu}^{-\alpha/Q + \eta} e^{-t_0 G_\nu}f\|_{L^p(d\mu_{\nu})}, 
\end{align*}
for any $\alpha \geq 0$ and $0 \leq t_0 < 1$. 

\medskip For the same, note first that it follows from integration by parts that 
$$
W_t^{(m)} f = W_t^{(m)} e^{-G_\nu} f + \int_{0}^1 t^{-1} W_t^{(m+1)} e^{-s G_\nu} f \, ds,
$$
using which we have 
\begin{align*}
&\left\|\left( \int_{0}^{1} (V_{\nu}(t)^{-\alpha/Q} \, V_{\nu}^{\eta} \, |W_t^{(m)} f|)^q \, \frac{dt}{t}\right)^{\frac{1}{q}} \right\|_{L^p(d\mu_{\nu})}  \\
& \lesssim \left\|\left( \int_{0}^{1} (V_{\nu}(t)^{-\alpha/Q}\, V_{\nu}^{\eta} \, |W_t^{(m)} e^{-G_\nu} f|)^q \, \frac{dt}{t}\right)^{\frac{1}{q}} \right\|_{L^p(d\mu_{\nu})}   \\
& \quad + \left\|\left( \int_{0}^{1} \left(V_{\nu}(t)^{-\alpha/Q}\, V_{\nu}^{\eta} \,  \left|\int_{0}^1 t^{-1} W_t^{(m+1)} e^{-s G_\nu} f \, ds \right|\right)^q \, \frac{dt}{t}\right)^{\frac{1}{q}} \right\|_{L^p(d\mu_{\nu})} \\
& =: I_1 + I_2. 
\end{align*}
First, we shall estimate $I_1$. For that, we consider 
\begin{align*}
|W_t^{(m)} e^{-G_\nu}f| & = |(tG_\nu)^m \, e^{-(t+1-t_0)G_\nu} \, (e^{-t_0 G_\nu} f)| \\ 
& \lesssim t^m \, (t+1-t_0)^{-m} \, e^{-b \, (t+1-t_0) G_\nu} \, |e^{-t_0 G_\nu}f| \\
& \lesssim e^{-b \, (1-t_0)G_\nu} \left( t^m \, e^{-b t G_\nu} \, |e^{-t_0G_\nu}f| \right),
\end{align*}
where the first inequality is due to Lemma \ref{lem:heat_operator_and_derivative}.
Now, using the above estimate, let us perform the following computations using Lemma \ref{lemma:ball_volume_and_heat_operator_commutation}, Proposition \ref{prop:semigroup_continuous_version_of_Fefferman_Stein_theorem}, and the Minkowski's integral inequality at intermediary steps to have 
\begin{align*}
I_1 & \lesssim \left\|\left( \int_{0}^{1} \left( V_{\nu}(t)^{-\alpha/Q}\, V_{\nu}^{\eta} \, e^{-b \, (1-t_0) G_\nu} \left( t^m \, e^{-b \, tG_\nu} \, |e^{-t_0 G_\nu} f| \right) \right)^q \, \frac{dt}{t}\right)^{\frac{1}{q}} \right\|_{L^p(d\mu_{\nu})} \\ 
& \lesssim \left\|e^{-2b^2 (1-t_0)G_{\nu}}\left(\int_{0}^{1} (V_{\nu}^{-\alpha/Q + \eta} \,  t^{m-\alpha/2} \, e^{-b \, tG_\nu} \, |e^{-t_0 G_\nu} f|)^q \, \frac{dt}{t}\right)^{\frac{1}{q}} \right\|_{L^p(d\mu_{\nu})} \\
& \lesssim \left\|\left(\int_{0}^{1} (V_{\nu}^{-\alpha/Q + \eta} \, t^{m-\alpha/2} \, e^{-btG_{\nu}} \, |e^{-t_0 G_\nu} f|)^q \, \frac{dt}{t}\right)^{\frac{1}{q}} \right\|_{L^p(d\mu_{\nu})} \\ 
& \lesssim \left\|\left(\int_{0}^{1} (V_{\nu}^{-\alpha/Q + \eta} \, t^{m-\alpha/2} \, |e^{-t_0 G_\nu} f|)^q \, \frac{dt}{t}\right)^{\frac{1}{q}} \right\|_{L^p(d\mu_{\nu})} \\ 
& \lesssim \left\|V_{\nu}^{-\alpha/Q  + \eta} e^{-t_0 G_\nu}f\right\|_{L^p(d\mu_{\nu})}. 
\end{align*}

\medskip Next, we consider $I_2$ and split the integral in $s$-variable into two parts, namely on the intervals $(0,t)$ and $[t,1]$. Let us first consider the integral concerning $s \in (0,t)$. For the same, note that
\begin{align*}
|W_{t}^{(m+1)}\, e^{-s G_\nu} f|  \lesssim |e^{-(s+t/2) G_\nu} W_{t/2}^{(m+1)} f| \lesssim  e^{-\frac{3}{2} b tG_\nu} |W_{t/2}^{(m+1)} f|,
\end{align*}
where we used Lemma \ref{lem:heat_operator_and_derivative} with $\kappa' = \frac{1}{2}$ and $\kappa = \frac{3}{2}$. The above estimate along with Proposition \ref{prop:semigroup_continuous_version_of_Fefferman_Stein_theorem} implies that 
\begin{align*}
& \left\|\left(\int_{0}^{1}  \left(V_{\nu}(t)^{-\alpha/Q} \,  V_{\nu}^{\eta} \, t^{-1} \int_{0}^{t} |W_t^{(m+1)} \, e^{-s G_\nu} f| \, ds \right)^{q} \frac{dt}{t} \right)^{1/q} \right\|_{L^p(d\mu_{\nu})} \\ 
& \quad \lesssim \left\|\left(\int_{0}^{1} \left(V_{\nu}(t)^{-\alpha/Q} \, V_{\nu}^{\eta} \, |W_{t}^{(m+1)} f| \right)^{q} \frac{dt}{t}\right)^{1/q}\right\|_{L^p(d\mu_{\nu})}.
\end{align*}

On the other hand, using Proposition \ref{prop:semigroup_continuous_version_of_Fefferman_Stein_theorem} in the other part involving the integral over $s \in [t,1]$, we have 
\begin{align*}
&\left\|\left(\int_{0}^{1}  \left(V_{\nu}(t)^{-\alpha/Q} \, V_{\nu}^{\eta} \, t^{-1} \int_{t}^{1} |W_t^{(m+1)} \, e^{-s G_\nu}f| \, ds \right)^{q} \frac{dt}{t} \right)^{1/q} \right\|_{L^p(d\mu_{\nu})} \\ 
& \leq \left\| \left(\int_{0}^{1}  \left(V_{\nu}(t)^{-\alpha/Q} \, V_{\nu}^{\eta} \, t^{m} \, e^{-t G_\nu} \int_{t}^{1} s^{-(m+1)} |W_s^{(m+1)} f| \, ds \right)^{q} \frac{dt}{t} \right)^{1/q}\right\|_{L^p(d\mu_{\nu})} \\
& \leq \left\| \left(\int_{0}^{1}  \left(V_{\nu}(t)^{-\alpha/Q} \, V_{\nu}^{\eta} \, t^{m} \int_{t}^{1} s^{-(m+1)}|W_s^{(m+1)} f| \, ds \right)^{q} \frac{dt}{t} \right)^{1/q}\right\|_{L^p(d\mu_{\nu})}\\ & \leq \left\| \left(\int_{0}^{1}  \left(\int_{0}^{1} K(s,t) \, g(s) \, \frac{ds}{s}\right)^{q}\frac{dt}{t} \right)^{1/q}\right\|_{L^p(d\mu_{\nu})},
\end{align*}
where 
\begin{align*}
K(s,t) & = \left(\frac{t}{s}\right)^{m} \left(\frac{V_{\nu}(s)}{V_{\nu}(t)}\right)^{\alpha/Q} \chi_{\{s\geq t\}}, \\ 
\text{and} \qquad \qquad 
g(s) & = V_{\nu}(s)^{-\alpha/Q } \, V_{\nu}^{\eta} \, |W_s^{(m+1)}f|.
\end{align*}
Now, it is easy to prove that
$$
\sup_{t \in (0,1)} \int_{0}^{1} K(s,t) \, \frac{ds}{s} \lesssim 1 \qquad \text{and} \qquad \sup_{s \in (0,1)} \int_{0}^{1} K(s,t) \, \frac{dt}{t} \lesssim 1, 
$$
and with this, one can invoke Schur's lemma to conclude that 
\begin{align*}
&\left\|\left(\int_{0}^{1}  \left(V_{\nu}(t)^{-\alpha/Q} \, V_{\nu}^{\eta} \, t^{-1} \int_{t}^{1} |W_t^{(m+1)} e^{-s G_\nu} f| \, ds \right)^{q} \frac{dt}{t} \right)^{1/q}\right\|_{L^p(d\mu_{\nu})} \\ 
& \lesssim \left\| \left(\int_{0}^{1} g(t)^{q} \frac{dt}{t} \right)^{1/q}\right\|_{L^p(d\mu_{\nu})} \\
& = \left\| \left(\int_{0}^{1} (V_{\nu}(t)^{-\alpha/Q} \, V_{\nu}^{\eta} \, |W_t^{(m+1)} f|)^{q} \frac{dt}{t} \right)^{1/q}\right\|_{L^p(d\mu_{\nu})},
\end{align*}
which completes the proof of Step 2.

\medskip 
\textbf{\underline{Step 3}:} In this final step, we shall prove the following estimate: 
\begin{align*}
& \|V_{\nu}^{-\alpha/Q + \eta} \, e^{-t_0G_{\nu}} f\|_{L^p(d\mu_{\nu})} \\
& \lesssim \left\|\left( \int_{0}^{1} (V_{\nu}(t)^{-\alpha/Q}\, V_{\nu}^{\eta} \, |W_t^{(m)}f|)^q \, \frac{dt}{t}\right)^{\frac{1}{q}} \right\|_{L^p(d\mu_{\nu})} + \|V_{\nu}^{-\alpha/Q + \eta} \, e^{-\frac{1}{2}G_{\nu}} f\|_{L^p(d\mu_{\nu})}, 
\end{align*}
and it will be clear from the proof of the above estimate that one can perform similar computations to also prove the following analogous estimate reversing the roles of exponent $t_0$ and $1/2$: 
\begin{align*}
& \|V_{\nu}^{-\alpha/Q + \eta} \, e^{-\frac{1}{2}G_{\nu}} f\|_{L^p(d\mu_{\nu})} \\
& \lesssim \left\|\left( \int_{0}^{1} (V_{\nu}(t)^{-\alpha/Q}\, V_{\nu}^{\eta} \, |W_t^{(m)}f|)^q \, \frac{dt}{t}\right)^{\frac{1}{q}} \right\|_{L^p(d\mu_{\nu})} + \|V_{\nu}^{-\alpha/Q + \eta} \, e^{-t_{0}G_{\nu}} f\|_{L^p(d\mu_{\nu})}. \end{align*}

Let us first assume that $\alpha > 0$ and $0 \leq t_0 < 1$. In this case, note that by making use of Lemma \ref{lem:decomposition_of_a_distibution/function}, we can write 
\begin{align*}
\|V_{\nu}^{-\alpha/Q + \eta} \, e^{-t_0G_{\nu}} f\|_{L^p(d\mu_{\nu})} & \lesssim 
\left\|V_{\nu}^{-\alpha/Q + \eta} \int_{0}^{1} W_t^{(m)}(e^{-t_0G_{\nu}}f) \, \frac{dt}{t} \right\|_{L^p(d\mu_{\nu})} \\ 
& \quad + \sum_{k=0}^{m-1} \|V_{\nu}^{-\alpha/Q + \eta} \, W_1^{(k)}(e^{-t_0G_{\nu}} f)\|_{L^p(d\mu_{\nu})}
\\ & =: I_3 +I_4. 
\end{align*}
Working with the term $I_4$, it follows from Lemma \ref{lem:heat_operator_and_derivative} that for any $k \geq 0$, 
\begin{align*}
|W_1^{(k)} \, e^{-t_0G_{\nu}} f(x)| & \lesssim (t_0+1/2)^{-k} e^{-b \, (t_0+1/2) G_\nu} \, |e^{-\frac{1}{2}G_\nu}f|(x), 
\end{align*}
and then by Lemma \ref{lemma:ball_volume_and_heat_operator_commutation} we get 
\begin{align*}
I_4 \lesssim
\| e^{-2b^2 \, (t_0+1/2)G_\nu} (V_{\nu}^{-\alpha/Q + \eta} |e^{-\frac{1}{2}G_\nu}f|)\|_{L^p(d\mu_{\nu})} \lesssim
\|V_{\nu}^{-\alpha/Q + \eta} e^{-\frac{1}{2}G_\nu}f\|_{L^p(d\mu_{\nu})}, 
\end{align*}
where the last step follows from the $L^p(d\mu_{\nu})$-boundedness of the heat operator $e^{-t G_\nu}$ with bound uniform in $t>0$. 

\medskip 
Next, we consider $I_3$. For the same, note first that 
\begin{align*}
V_{\nu}(x, 1)^{-\alpha/Q + \eta} \int_{0}^{1} |W_t^{(m)}(e^{-t_0G_{\nu}}f)(x)|  \, \frac{dt}{t} \lesssim e^{- 2b \, t_0 G_\nu} \left( V_{\nu}^{-\alpha/Q + \eta}
\int_{0}^{1} |W_t^{(m)} f| \frac{\,dt}{t} \right)(x),
\end{align*}
and therefore,  
\begin{align*}
I_3 & \lesssim \left\| V_{\nu}^{-\alpha/Q + \eta}
\int_{0}^{1} |W_t^{(m)} f| \frac{\,dt}{t} \right\|_{L^p(d\mu_{\nu})} \\ 
& = \left\|
\int_{0}^{1} \left(\frac{V_{\nu}(t)}{V_{\nu}}\right)^{\alpha/Q} V_{\nu}(t)^{-\alpha/Q} V_{\nu}^{\eta} |W_t^{(m)} f| \frac{\,dt}{t} \right\|_{L^p(d\mu_{\nu})}.
\end{align*}
Since $\frac{V_{\nu}(t)}{V_{\nu}} \leq t^{\frac{d}{2}}$ and $\alpha > 0$, a simple application of H\"{o}lder's inequality implies the claimed estimate 
\begin{align*}
I_3 & \lesssim \left\|
\left(\int_{0}^{1} (V_{\nu}(t)^{-\alpha/Q} \, V_{\nu}^{\eta} \, |W_t^{(m)} f|)^q \frac{\,dt}{t} \right)^{1/q} \right\|_{L^p(d\mu_{\nu})}.
\end{align*}

When $\alpha = 0$ and $t_0 \in (0,1)$, we may have to tweak the above argument as follows. First of all, we would use $m+1$ instead of $m$ in the decomposition formula leading to the terms $I_3$ and $I_4$. The treatment of $I_4$ remains unchanged. But, for $I_3$, using Lemmas \ref{lem:heat_operator_and_derivative} and \ref{lemma:ball_volume_and_heat_operator_commutation}, we get  
\begin{align*}
I_{3} & = \left\|V_{\nu}^{\eta} \int_{0}^{1} W_t^{(m+1)}(e^{-t_0G_{\nu}}f) \, \frac{dt}{t} \right\|_{L^p(d\mu_{\nu})} \\
& \lesssim \left\|V_{\nu}^{\eta} \int_{0}^{1} t \, e^{-bt_0G_{\nu}} \, |W_t^{(m)}f| \, \frac{dt}{t} \right\|_{L^p(d\mu_{\nu})} \\
& \lesssim \left\|\int_{0}^{1} t \, V_{\nu}^{\eta} \, |W_t^{(m)}f| \, \frac{dt}{t} \right\|_{L^p(d\mu_{\nu})} 
\lesssim \left\|\left(\int_{0}^{1} ( V_{\nu}^{\eta} \, |W_t^{(m)}f|)^{q} \, \frac{dt}{t} \right)^{1/q} \right\|_{L^p(d\mu_{\nu})}, 
\end{align*}
where the last step follows from the H\"{o}lder's inequality, and this completes the proof of Step 3.

\medskip 
These three steps together complete the proof of the independence of parameters.
\end{proof}


\subsection{Littlewood--Paley type characterisation} 
\label{subsec:Littlewood_Paley_characterisation}
Here, we shall prove a Littlewood--Paley type characterisation of the Besov and Triebel--Lizorkin norms. 
\begin{theorem} \label{thm:discrete-characterisation-Besov}
Let $1 \leq p, \, q \leq \infty, \, \alpha > 0$ and $\eta \in \R$. Then, the classical Besov norm $\displaystyle \|f\|_{B^{p,  q}_{\alpha, \eta}(d\mu_{\nu})}$ is equivalent to 
\begin{equation}
\left(\sum_{j=1}^{\infty} \|2^{j\alpha/2} \, V_{\nu}^{\eta} \, |W_{2^{-j}}^{(m)} f|\|_{L^p(d\mu_{\nu})}^q \right)^{1/q} + \|V_{\nu}^{\eta} \, e^{-t_0 G_{\nu}} f\|_{L^{p}(d\mu_{\nu})}, 
\end{equation}
and the non-classical Besov norm $\displaystyle \|f\|_{\tilde{B}^{p,  q}_{\alpha, \eta}(d\mu_{\nu})}$ is equivalent to
\begin{equation}
\left(\sum_{j=1}^{\infty} \|V_{\nu}(2^{-j})^{-\alpha/Q} \, V_{\nu}^{\eta} \, |W_{2^{-j}}^{(m)} f|\|_{L^p(d\mu_{\nu})}^q \right)^{1/q} + \|V_{\nu}^{-\alpha/Q +\eta} e^{-t_0 G_{\nu}} f\|_{L^{p}(d\mu_{\nu})}, 
\end{equation}
for any $0 \leq t_0 < 1$ and $m \in \N$ such that $m > \alpha/2$, with the usual modifications for $q = \infty$.
Also, if $\alpha = 0$, then above result holds for $t_0 \in (0,1)$.
\end{theorem}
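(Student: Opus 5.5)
By Theorem \ref{thm:independence_of_parameters_Besov}, it suffices to show that the discrete sum is comparable to the continuous integral
\[
\left(\int_{0}^{1} \|V_{\nu}(t)^{-\alpha/Q} \, V_{\nu}^{\eta} \, |W_{t}^{(m)} f|\|^{q}_{L^{p}(d\mu_{\nu})} \, \frac{dt}{t} \right)^{1/q},
\]
modulo the term $\|V_{\nu}^{-\alpha/Q+\eta} e^{-t_0 G_\nu} f\|_{L^p(d\mu_\nu)}$, for one fixed admissible $m$ and $t_0$. I will treat the non-classical case; the classical one is identical with $V_\nu(t)^{-\alpha/Q}$ replaced by $t^{-\alpha/2}$. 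The basic tool is a pointwise comparison of $W^{(m)}_t f$ and $W^{(m)}_{s} f$ for $t$ and $s$ in a common dyadic block. On the block $t\in[2^{-j},2^{-j+1}]$, the weights $V_\nu(t)$ and $V_\nu(2^{-j})$ are comparable by local doubling, uniformly in $x$ and $j$, since $t\le 2$.

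\textbf{Continuous $\lesssim$ discrete.} For $t \in [2^{-j}, 2^{-j+1}]$ with $j\ge 2$, write
\[
W_t^{(m)} f = (t/2^{-j-1})^{m} \, e^{-(t-2^{-j-1})G_\nu} W^{(m)}_{2^{-j-1}} f,
\]
where $t-2^{-j-1}\in[2^{-j-1},3\cdot 2^{-j-1}]$. Lemma \ref{lem:heat_operator_and_derivative}(i) then gives
\[
|W_t^{(m)} f|\lesssim e^{-c2^{-j}G_\nu}|W^{(m)}_{2^{-j-1}}f|.
\]
Next I move the weight $V_\nu(2^{-j})^{-\alpha/Q}V_\nu^{\eta}$ inside the semigroup using Lemma \ref{lemma:ball_volume_and_heat_operator_commutation}, applied once with weight $V_\nu(\cdot,t)^s$ and once with $V_\nu(\cdot,1)^s$, at the cost of enlarging the time parameter by constant factors. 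The $L^p(d\mu_\nu)$-boundedness of the heat semigroup, uniform in $t$, then bounds the $L^p$ norm on this block by the $(j+1)$-th term of the discrete sum. Summing over $j$ gives the claim.

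For the top block $t\in[1/2,1]$ (i.e.\ $j=1$), the same argument would compare it with $j=2$ as well. Alternatively, one can bound it directly by the low-frequency term: write $W_t^{(m)}f=(tG_\nu)^m e^{-(t-t_0)G_\nu}e^{-t_0G_\nu}f$ and use Lemma \ref{lem:heat_operator_and_derivative}(ii), noting that $t-t_0$ is bounded below once $t\ge 1/2$ and $t_0$ is small. For $t_0$ close to $1$, I would first reduce to $t_0=1/2$ via Theorem \ref{thm:independence_of_parameters_Besov}.

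\textbf{Discrete $\lesssim$ continuous.} For $t\in[2^{-j},2^{-j+1}]$, write
\[
W^{(m)}_{2^{-j+1}} f=(2^{-j+1}/t)^m e^{-(2^{-j+1}-t)G_\nu}W^{(m)}_tf .
\]
The difficulty is that $2^{-j+1}-t$ may vanish, so Lemma \ref{lem:heat_operator_and_derivative}(i) does not apply directly. To avoid this, I compare $W^{(m)}_{2^{-j}}$ with $W_t^{(m)}$ for $t\in[2^{-j-2},2^{-j-1}]$. Then the difference of times lies in $[2^{-j-1},3\cdot2^{-j-2}]$, and we get
\[
|W^{(m)}_{2^{-j}}f|\lesssim e^{-c2^{-j}G_\nu}|W^{(m)}_tf|
\]
for every such $t$. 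Weighting as before and taking $L^p$ norms yields
\[
\|V_\nu(2^{-j})^{-\alpha/Q}V_\nu^\eta W^{(m)}_{2^{-j}}f\|_p\lesssim \|V_\nu(t)^{-\alpha/Q}V_\nu^\eta W^{(m)}_tf\|_p .
\]
Averaging this over $t$ in the block with respect to $dt/t$ (the block has $dt/t$-measure $\log 2$), raising to the power $q$ and summing over $j\ge1$ bounds the discrete sum by the continuous integral over $(0,1/4]$. The case $q=\infty$ is handled by taking suprema instead.

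\textbf{Main obstacle.} The delicate point is this bookkeeping of time shifts. The heat operator must always act over a time comparable to $2^{-j}$, so that the constants from Lemmas \ref{lem:heat_operator_and_derivative} and \ref{lemma:ball_volume_and_heat_operator_commutation} are uniform in $j$; this forces the offset choice of blocks in the second direction. Everything else is routine. The case $\alpha=0$ is covered in the same way, with $t_0\in(0,1)$, by the corresponding version of Theorem \ref{thm:independence_of_parameters_Besov}.
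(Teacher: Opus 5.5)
Your proposal is correct and follows essentially the paper's route. You reduce via Theorem \ref{thm:independence_of_parameters_Besov} to comparing the continuous and dyadic quantities, dominate $W_t^{(m)}f$ on each dyadic block by a heat semigroup at time $\sim 2^{-j}$ applied to $W_{2^{-j-1}}^{(m)}f$ using Lemma \ref{lem:heat_operator_and_derivative}(i), move the weights with Lemma \ref{lemma:ball_volume_and_heat_operator_commutation}, and conclude with uniform $L^p(d\mu_\nu)$-boundedness of the semigroup, which for Besov replaces the vector-valued Proposition \ref{prop:semigroup_version_of_Fefferman_Stein_theorem} used in the paper's Triebel--Lizorkin proof; your offset-block argument for the reverse inequality also correctly fills in the details the paper omits as ``reversing the steps''.
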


\begin{theorem} \label{thm:discrete_characterisation-Triebel--Lizorkin} 
Let $1 < p < \infty, \, 1 \leq q \leq \infty, \, \alpha > 0$ and  $\eta \in \R$. Then, the classical Triebel--Lizorkin norm $\displaystyle \|f\|_{F^{p,  q}_{\alpha, \eta}(d\mu_{\nu})}$ is equivalent to 
\begin{equation}
\left\|\left(\sum_{j=1}^{\infty} |2^{j\alpha/2}\, V_{\nu}^{\eta} \, W_{2^{-j}}^{(m)} f|^{q} \right)^{1/q}\right\|_{L^p(d\mu_{\nu})} + \|V_{\nu}^{\eta} \, e^{-t_0 G_{\nu}} f\|_{L^{p}(d\mu_{\nu})},    
\end{equation}
and the non-classical Triebel--Lizorkin norm $\displaystyle \|f\|_{\tilde{F}^{p,  q}_{\alpha, \eta}(d\mu_{\nu})}$ is equivalent to 
\begin{equation}
\left\|\left(\sum_{j=1}^{\infty} |V_{\nu}(2^{-j})^{-\alpha/Q} \, V_{\nu}^{\eta} \, W_{2^{-j}}^{(m)} f|^{q} \right)^{1/q}\right\|_{L^p(d\mu_{\nu})} + \|V_{\nu}^{-\alpha/Q +\eta} e^{-t_0 G_{\nu}} f\|_{L^{p}(d\mu_{\nu})},    
\end{equation}
for any $0 \leq t_0 < 1$ and $m \in \N$ such that $m > \alpha/2$, with the usual modifications for $q = \infty$.
Also, if $\alpha = 0$, then above result holds for $t_0 \in (0,1)$.
\end{theorem}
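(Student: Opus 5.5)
By Theorem \ref{thm:independence_of_parameters_Triebel_Lizorkin} it is enough to show that the continuous quantity
$$\left\|\left(\int_0^1 \big(V_\nu(t)^{-\alpha/Q} V_\nu^{\eta} |W^{(m)}_t f|\big)^q \tfrac{dt}{t}\right)^{1/q}\right\|_{L^p(d\mu_\nu)}$$
and its discrete counterpart $\big\|\big(\sum_{j\ge1} |V_\nu(2^{-j})^{-\alpha/Q} V_\nu^\eta W^{(m)}_{2^{-j}} f|^q\big)^{1/q}\big\|_{L^p(d\mu_\nu)}$ are mutually comparable, modulo the low-frequency term $\|V_\nu^{-\alpha/Q+\eta} e^{-t_0G_\nu}f\|_{L^p(d\mu_\nu)}$. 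We treat only the non-classical case with $q<\infty$; the classical case drops the ratio $V_\nu(t)/V_\nu(2^{-j})$, and $q=\infty$ needs only routine changes. We split $(0,1]=\bigcup_{j\ge1}[2^{-j},2^{-j+1}]$. On each dyadic block, local doubling gives $V_\nu(x,t)\sim V_\nu(x,2^{-j})$ uniformly in $x$ and $j$, so the volume weights can be freely replaced by their value at $2^{-j}$. The proof then reduces to two pointwise comparisons between $W^{(m)}_t f$ and $W^{(m)}_{2^{-j}} f$ at comparable scales, in each direction.

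\emph{Continuous $\lesssim$ discrete.} Take $t\in[2^{-j},2^{-j+1}]$, $j\ge2$, and write $t=2^{-j}+s$ with $s\in[0,2^{-j}]$. Then
$$W^{(m)}_t f=(t/2^{-j})^m e^{-sG_\nu}W^{(m)}_{2^{-j}}f,$$
so $|W^{(m)}_t f|\lesssim |e^{-sG_\nu}W^{(m)}_{2^{-j}}f|$. The range $s=0$ is awkward for Lemma \ref{lem:heat_operator_and_derivative}(i), so instead we write $t=2^{-j-1}+s$ with $s\in[2^{-j-1},3\cdot 2^{-j-1}]$. Lemma \ref{lem:heat_operator_and_derivative}(i) then gives $|W^{(m)}_t f|\lesssim e^{-c2^{-j}G_\nu}|W^{(m)}_{2^{-j-1}}f|$ with $c$ a fixed constant. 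Lemma \ref{lemma:ball_volume_and_heat_operator_commutation} moves $V_\nu(2^{-j})^{-\alpha/Q}V_\nu^\eta$ inside the semigroup, at the price of enlarging $c$. Integrating in $t$ over the block costs only a constant in $dt/t$, and Proposition \ref{prop:semigroup_version_of_Fefferman_Stein_theorem} (with constant functions $t_j$) removes the semigroups. The result is a bound by the discrete sum shifted by one index, which is harmless. The top block $j=1$, where $t\in[1/2,1]$, is handled the same way, or directly by $|W^{(m)}_t f|\lesssim e^{-cG_\nu}|e^{-t_0G_\nu}f|$ when $t_0<1/2$ and by $W^{(m)}_{1/2}$ otherwise.

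\emph{Discrete $\lesssim$ continuous.} For each $t\in[2^{-j-1},2^{-j}]$ we have $W^{(m)}_{2^{-j}}f=(2^{-j}/t)^m e^{-(2^{-j}-t)G_\nu}W^{(m)}_t f$, and again we need a strictly positive time gap. So we instead average over $t\in[2^{-j-2},2^{-j-1}]$, where the gap $2^{-j}-t$ lies in $[2^{-j-1},3\cdot2^{-j-2}]$ and Lemma \ref{lem:heat_operator_and_derivative}(i) yields
$$|W^{(m)}_{2^{-j}}f|\lesssim e^{-cb2^{-j}G_\nu}|W^{(m)}_t f|.$$
Raising to the power $q$ and averaging with $dt/t$ over this interval, Minkowski's inequality inside the positive kernel gives
$$|W^{(m)}_{2^{-j}}f|\lesssim e^{-c'2^{-j}G_\nu}\Big(\int_{2^{-j-2}}^{2^{-j-1}}|W^{(m)}_t f|^q\tfrac{dt}{t}\Big)^{1/q}.$$
We insert the weights via Lemma \ref{lemma:ball_volume_and_heat_operator_commutation}, apply Proposition \ref{prop:semigroup_version_of_Fefferman_Stein_theorem} to the sequence $F_j=(\int_{I_j}(\cdot)^q\,dt/t)^{1/q}$, and sum over $j$ to recover the continuous integral over $(0,1/2)$. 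This is the same mechanism as in the proof of Proposition \ref{prop:semigroup_continuous_version_of_Fefferman_Stein_theorem}.

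The main obstacle is the bookkeeping of constants. Each application of Lemma \ref{lemma:ball_volume_and_heat_operator_commutation} inflates the time by a factor $2b$, so we must check that the resulting times still lie in a fixed window $[\kappa'2^{-j},\kappa2^{-j}]$; this is needed for Proposition \ref{prop:semigroup_version_of_Fefferman_Stein_theorem} to apply with $j$-independent constants. The step also requires that the weight $V_\nu(2^{-j})^{-\alpha/Q}$ commute with the semigroup uniformly in $j$; this holds because Lemma \ref{lemma:ball_volume_and_heat_operator_commutation} is uniform for $t\le C$ and the ratio $V_\nu(x,2^{-j})/V_\nu(x,c2^{-j})$ is bounded. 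For the Besov version no vector-valued inequality is needed: $L^p$-boundedness of $e^{-tG_\nu}$ suffices after the same pointwise bounds. This explains why the Besov theorem allows $p\in\{1,\infty\}$ while the Triebel--Lizorkin theorem is restricted to $1<p<\infty$.
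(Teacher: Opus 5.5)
Your proposal is correct and follows essentially the same route as the paper. You reduce via Theorem~\ref{thm:independence_of_parameters_Triebel_Lizorkin} to comparing the continuous and discrete quantities, compare $W^{(m)}_t f$ with a neighbouring dyadic scale through Lemma~\ref{lem:heat_operator_and_derivative}(i) with a strictly positive time gap, move the weights with Lemma~\ref{lemma:ball_volume_and_heat_operator_commutation}, and remove the semigroups with Proposition~\ref{prop:semigroup_version_of_Fefferman_Stein_theorem}. Your handling of the index shift (landing on $W^{(m)}_{2^{-j-1}}f$ and reindexing by local doubling) matches what the paper's computation actually produces, and your averaging-plus-Minkowski argument over $[2^{-j-2},2^{-j-1}]$ correctly supplies the reverse inequality that the paper only describes as ``reversing the steps''.
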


We shall only prove Theorem \ref{thm:discrete_characterisation-Triebel--Lizorkin}. The proof of Theorem \ref{thm:discrete-characterisation-Besov} can be verified following similar ideas. 

\begin{proof}[Proof of Theorem \ref{thm:discrete_characterisation-Triebel--Lizorkin}] 
We shall only prove the non-classical case. The classical case can be proved by following similar steps and with lesser difficulty. In view of Theorem \ref{thm:independence_of_parameters_Triebel_Lizorkin}, it suffices to prove that 
\begin{align} 
\label{main-ineq-discrete_characterisation}
&\left\|\left(\int_{0}^{1}(V_{\nu}(t)^{-\alpha/Q}\, V_{\nu}^{\eta} \, |W_t^{(m)} f|)^q \, \frac{dt}{t} \right)^{1/q} \right\|_{L^p(d\mu_{\nu})} \\
\nonumber & \sim \left\|\left(\sum_{j=1}^{\infty}(V_{\nu}(2^{-j})^{-\alpha/Q} \, V_{\nu}^{\eta} \, |W_{2^{-j}}^{(m)} f|)^q \right)^{1/q} \right\|_{L^p(d\mu_{\nu})}.
\end{align}

For $\lesssim$ part in \eqref{main-ineq-discrete_characterisation}, note that 
\begin{align*}
&\int_{0}^{1}(V_{\nu}(t)^{-\alpha/Q}\, V_{\nu}^{\eta} \, |W_t^{(m)} f|)^q \, \frac{dt}{t} \\
& \lesssim \sum_{j=1}^\infty \int_{2^{-j}}^{2^{-j+1}}  (V_{\nu}(2^{-j})^{-\alpha/Q}  \, V_{\nu}^{\eta} \, |(2^{-j}G_\nu)^m e^{-(t-2^{-j-1})G_\nu} (e^{-2^{-j-1}G_\nu}f)|)^q \, \frac{dt}{t} \\
& \lesssim \sum_{j=1}^\infty \int_{2^{-j}}^{2^{-j+1}}  (V_{\nu}(2^{-j})^{-\alpha/Q}  \, V_{\nu}^{\eta} e^{-\frac{3}{2} 2^{-j} bG_\nu}\, |(2^{-j}G_\nu)^m e^{-2^{-j-1}G_\nu}f)|)^{q} \, \frac{dt}{t} \\
& \lesssim \sum_{j=1}^\infty (e^{-6 b^{3} \, 2^{-j}G_\nu}(V_{\nu}(2^{-j})^{-\alpha/Q}  \, V_{\nu}^{\eta} \, |(2^{-j}G_\nu)^m e^{-2^{-j-1}G_\nu} f)|)^q \\
& \lesssim \sum_{j=1}^\infty   (e^{-12 b^{3} 2^{-j}G_\nu}(V_{\nu}(2^{-j})^{-\alpha/Q}  \, V_{\nu}^{\eta} \, |W_{2^{-j}}^{(m)}f)|)^q,
\end{align*}
where we used Lemma \ref{lem:heat_operator_and_derivative} (with $\kappa' = 1/2, \, \kappa = 3/2$ and $t_0 = 2^{-j}$) and Lemma \ref{lemma:ball_volume_and_heat_operator_commutation}. And, with this estimate, the inequality $\lesssim$ in \eqref{main-ineq-discrete_characterisation} follows from Proposition \ref{prop:semigroup_version_of_Fefferman_Stein_theorem}. 

\medskip 
The reverse inequality $\gtrsim$ in \eqref{main-ineq-discrete_characterisation} can be established by more or less reversing the steps of the above computations, so we omit the details. 
\end{proof}


\subsection{Characterisation in terms of vector fields} 
\label{subsec:characterisation_in_terms_of_vector_fields}
Here, we shall characterise Besov and Triebel--Lizorkin norms in terms of the vector fields. For this, let us first define
$$
W_{2^{-j}}^{(m), *} f = \sup_{t \in [2^{-j},2^{-j+1}]} \max_{|\gamma|\leq 2m} | t^m X^{\gamma} e^{-t G_\nu} f|.
$$

Unlike the previous characterisations, the following one is only for $\alpha > 0$. 
\begin{theorem}
\label{thm:vector_field_characterisation_Besov_norm}
Let $1 \leq p, \, q \leq \infty, \, \alpha > 0$ and $\eta \in \R$.
Then the classical Besov norm $\|f\|_{B^{p, q}_{\alpha, \eta}(d\mu_{\nu})}$ is equivalent to 
\begin{equation*}
\left(\sum_{j=1}^\infty \|2^{j\alpha/2} \, V_{\nu}^{\eta} \, W_{2^{-j}}^{(m), *} f\|_{L^p(d\mu_{\nu})}^q \right)^{\frac{1}{q}} + \| V_{\nu}^{\eta} \, f\|_{L^{p}(d\mu_{\nu})},    
\end{equation*}
and the non-classical Besov norm $\|f\|_{\tilde{B}^{p, q}_{\alpha, \eta}(d\mu_{\nu})}$ is equivalent to 
\begin{equation*}
\left(\sum_{j=1}^\infty \|V_{\nu}(2^{-j})^{-\alpha/Q} \, V_{\nu}^{\eta} \, W_{2^{-j}}^{(m), *} f\|_{L^p(d\mu_{\nu})}^q \right)^{\frac{1}{q}} + \|V_{\nu}^{-\alpha/Q +\eta} f\|_{L^{p}(d\mu_{\nu})},    
\end{equation*}
for any $m \in \mathbb{N}$ such that $m > \alpha/2$, with usual modifications for $q = \infty$.
\end{theorem}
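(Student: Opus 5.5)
We prove the non-classical case; the classical one is identical with $t^{-\alpha/2}$ in place of $V_\nu(t)^{-\alpha/Q}$. Write $N(f)$ for the proposed expression. By Theorem \ref{thm:discrete-characterisation-Besov}, $\|f\|_{\tilde{B}^{p,q}_{\alpha,\eta}(d\mu_\nu)}$ is equivalent to the discrete norm built from $V_\nu(2^{-j})^{-\alpha/Q}V_\nu^\eta |W^{(m)}_{2^{-j}}f|$ plus $\|V_\nu^{-\alpha/Q+\eta}e^{-t_0G_\nu}f\|_{L^p(d\mu_\nu)}$. We may take $t_0=0$, which is allowed there because $\alpha>0$. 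The lower-order terms then match exactly, and it remains to compare the dyadic parts.

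\textbf{The bound $\|f\|\lesssim N(f)$.} Since $G_\nu = G - 2\nu\cdot X'$, the operator $G_\nu^m$ is a finite linear combination, with constant coefficients, of monomials $X^\gamma$ with $|\gamma|\le 2m$. Hence
\[
|W^{(m)}_{2^{-j}}f| = |(2^{-j})^m G_\nu^m e^{-2^{-j}G_\nu}f| \lesssim \max_{|\gamma|\le 2m}|(2^{-j})^m X^\gamma e^{-2^{-j}G_\nu}f| \le W^{(m),*}_{2^{-j}}f .
\]
The lower-order terms of $G_\nu^m$ carry extra factors $\nu$, which are harmless since $2^{-j}\le 1$. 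This inequality gives the bound immediately.

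\textbf{The bound $N(f)\lesssim\|f\|$.} This is the main step. Apply Lemma \ref{lem:decomposition_of_a_distibution/function} with some $M> m$ to write
\[
f=\frac{1}{(M-1)!}\int_0^1 W^{(M)}_s f\,\frac{ds}{s}+\sum_{k<M}\frac1{k!}W^{(k)}_1 f .
\]
Then, for $t\in[2^{-j},2^{-j+1}]$ and $|\gamma|\le 2m$, estimate $t^mX^\gamma e^{-tG_\nu}W^{(M)}_s f$ in two regimes.
\begin{itemize}
\item For $s\le t$, write $W^{(M)}_s f = (sG_\nu)^{M-m}e^{-\frac s2 G_\nu}W^{(m)}_{s/2}f$ and use Lemma \ref{lem:heat_operator_and_derivative}(ii): the vector fields fall on $e^{-tG_\nu}$ and the powers of $G_\nu$ are absorbed, giving the bound $(s/t)^{M-m}$ times a heat average of $|W^{(m)}_{s/2}f|$.
\item For $s\ge t$, write $t^mX^\gamma e^{-tG_\nu}W^{(M)}_sf = t^m s^{M}X^\gamma G_\nu^{M}e^{-(t+s/2)G_\nu}e^{-\frac s2G_\nu}f$. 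Grouping as $X^\gamma e^{-\frac t2 G_\nu}$ applied to $G_\nu^{m}e^{-\frac s4 G_\nu}W^{(M-m)}_{s/4}(\cdot)$ gives the bound $(t/s)^{m-|\gamma|/2}\lesssim 1$, which is not good enough. Instead, exploit $m>\alpha/2$ by keeping $t^m$ against $s^{-m}$: bound by $(t/s)^{m}\,t^{-|\gamma|/2}\cdot s^{|\gamma|/2}$. We will need the decay $(t/s)^{m-|\gamma|/2}$ with $|\gamma|\le 2m$, which can vanish. To fix this, first choose $M$ large, and in this regime put only $e^{-\frac t2G_\nu}$ with $X^\gamma$, producing $t^{m-|\gamma|/2}$. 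For $|\gamma|=2m$ this gives no gain, and I expect to need the pair $m$ and $m'>\alpha/2$ handled by the independence result to recover decay $(t/s)^{\epsilon}$ with $\epsilon=m-\alpha/2>0$.
\end{itemize}
This trade-off is the main obstacle. The move I would try first is to replace $f$ by the decomposition for $e^{-\frac t2G_\nu}$-smoothed pieces, so that the factor $V_\nu(t)^{-\alpha/Q}$ versus $V_\nu(s)^{-\alpha/Q}$ supplies the needed ratio $(s/t)^{\alpha/2}$ to offset via Lemma \ref{lem:Cor_of_Schur_lemma}(i).

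\textbf{Summation and low frequencies.} After moving the weights $V_\nu(t)^{-\alpha/Q}V_\nu^\eta$ through the heat operators with Lemma \ref{lemma:ball_volume_and_heat_operator_commutation} and Lemma \ref{lem:heat_operator_and_derivative}(i) (the supremum over $t$ is dominated by a fixed heat operator at scale $2^{-j}$), we use uniform $L^p$-boundedness. Lemma \ref{lem:Cor_of_Schur_lemma}(i) then sums the dyadic pieces with the factors $\min(2^{-j},2^{-n})$-type decay, giving control by the discrete Besov part. The terms $W^{(k)}_1f$ contribute $t^m X^\gamma e^{-(t+1)G_\nu}f$, which is bounded pointwise by $e^{-cG_\nu}|f|$. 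Since $V_\nu(2^{-j})^{-\alpha/Q}\lesssim 2^{j\alpha/2}V_\nu^{-\alpha/Q}$ is beaten by $t^m\sim 2^{-jm}$, their contribution is $\lesssim\|V_\nu^{-\alpha/Q+\eta}f\|_{L^p(d\mu_\nu)}$ after summing in $j$.
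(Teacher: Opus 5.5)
Your reduction via the discrete characterisation with $t_0=0$, the easy bound $|W^{(m)}_{2^{-j}}f|\lesssim W^{(m),*}_{2^{-j}}f$, the low-frequency terms $W^{(k)}_1f$, and your treatment of the regime $s\le t$ are all fine. In that regime your choice $M>m$ even gives operator decay $(s/t)^{M-m}$. The paper instead decomposes with the same $m$ into $f_n=\int_{2^{-n}}^{2^{-n+1}}W^{(m)}_sf\,\frac{ds}{s}$; it gets no operator decay there and uses $\alpha>0$ through the volume ratio.

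The gap is the regime $s\ge t$, which is the heart of $N(f)\lesssim\|f\|$. You never obtain a decaying bound there and leave it as ``the main obstacle''. Your summation paragraph then presupposes exactly the missing decay. The failure comes from letting $X^\gamma$ act on $e^{-\frac t2G_\nu}$. That costs $t^{-|\gamma|/2}$, and for $|\gamma|=2m$ it swallows all of $t^m$.

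Your suggested remedy also points the wrong way. For $s\ge t$ one has $V_\nu(t)^{-\alpha/Q}=(V_\nu(s)/V_\nu(t))^{\alpha/Q}\,V_\nu(s)^{-\alpha/Q}$, and this ratio can be as large as $(s/t)^{\alpha/2}$ (it is exactly $(s/t)^{\alpha/2}$ in the classical case). So in this regime the weights are a loss that must be beaten, not a source of decay.

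The missing idea is the same grouping you used in your first regime. Merge the heat factors into $e^{-(t+s/2)G_\nu}$, whose scale is $\sim s$, and let both $X^\gamma$ and $G_\nu^{M-m}$ fall on it. Lemma \ref{lem:heat_operator_and_derivative}(ii) then gives
\[
|t^mX^\gamma e^{-tG_\nu}W^{(M)}_sf|\lesssim t^m s^{M-m}(t+s/2)^{-(M-m)-|\gamma|/2}\,e^{-b(t+s/2)G_\nu}|W^{(m)}_{s/2}f|\lesssim (t/s)^{m}\,e^{-b(t+s/2)G_\nu}|W^{(m)}_{s/2}f|,
\]
since $t+s/2\sim s$, $s<1$ and $|\gamma|\le 2m$, so that $s^{-|\gamma|/2}\le s^{-m}$. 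This is the paper's factor $2^{-jm}(2^{-n}+2^{-j})^{-m}\sim 2^{-(j-n)m}$ for $n<j$.

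Set against the weight loss $(s/t)^{\alpha/2}$, it leaves the kernel $(t/s)^{m-\alpha/2}$. That kernel decays precisely because $m>\alpha/2$, which is where this hypothesis is used. Lemma \ref{lem:Cor_of_Schur_lemma}(i) with exponents $m-\alpha/2<m$, or a discrete Young inequality, then sums the pieces. With this estimate your outline closes; without it the main inequality is unproved.
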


\begin{theorem} \label{thm:vector_field_characterisation_Triebel_Lizorkin_norm}
Let $1 < p < \infty, \, 1 \leq  q \leq \infty, \, \alpha > 0$ and $ \eta \in \R$.
Then the classical Triebel--Lizorkin norm $\|f\|_{F^{p, \, q}_{\alpha, \eta}(d\mu_{\nu})}$ is equivalent to 
\begin{equation*}
\left\|\left(\sum_{j=1}^\infty (2^{j\alpha/2} \,  V_{\nu}^{\eta} \, W_{2^{-j}}^{(m), *} f)^q \right)^{\frac{1}{q}}\right\|_{L^p(d\mu_{\nu})} + \| V_{\nu}^{\eta} \, f\|_{L^{p}(d\mu_{\nu})},    
\end{equation*}
and the non-classical Triebel--Lizorkin norm $\|f\|_{\tilde{F}^{p, \, q}_{\alpha, \, \eta}(d\mu_{\nu})}$ is equivalent to 
\begin{equation*}
\left\|\left(\sum_{j=1}^\infty (V_{\nu}(2^{-j})^{-\alpha/Q} \, V_{\nu}^{\eta} \, W_{2^{-j}}^{(m), *} f)^q \right)^{\frac{1}{q}}\right\|_{L^p(d\mu_{\nu})} + \|V_{\nu}^{-\alpha/Q +\eta} f\|_{L^{p}(d\mu_{\nu})},    
\end{equation*}
for any $m \in \mathbb{N}$ such that $m > \alpha/2$, with usual modifications for $q = \infty$. 
\end{theorem}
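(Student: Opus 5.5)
We treat only the non-classical case, since the classical one is the same with $V_\nu(2^{-j})^{-\alpha/Q}$ replaced by $2^{j\alpha/2}$. By Theorem \ref{thm:discrete_characterisation-Triebel--Lizorkin} it suffices to compare the vector-field quantity with
$$
\Big\|\Big(\sum_{j\ge1}(V_\nu(2^{-j})^{-\alpha/Q}V_\nu^\eta|W^{(m)}_{2^{-j}}f|)^q\Big)^{1/q}\Big\|_{L^p(d\mu_\nu)}+\|V_\nu^{-\alpha/Q+\eta}e^{-t_0G_\nu}f\|_{L^p(d\mu_\nu)},
$$
with any convenient $t_0\in[0,1)$. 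Taking $t_0=0$ covers the low-frequency term $\|V_\nu^{-\alpha/Q+\eta}f\|_{L^p}$ directly, and $t_0=0$ is allowed because $\alpha>0$. This is where the hypothesis $\alpha>0$ enters.

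\textbf{Lower bound on the vector-field quantity.} The easy direction is $\lesssim$ for the Littlewood--Paley norm. We expand $G_\nu^m = (G-2\nu\cdot X')^m$ as a linear combination of $X^\gamma$ with $|\gamma|\le 2m$ and bounded coefficients. The coefficients are bounded because $\nu$ is a constant vector and the fields $X_j$, $X_{j,k}$ have the explicit form given above; note that commutators such as $[X_j,X_{j,k}]=\partial_{x''_k}$ never occur, since we only multiply the fields as they appear. For $t\in[2^{-j},2^{-j+1}]$ this gives, pointwise,
$$
|W^{(m)}_t f|\lesssim \sum_{|\gamma|\le2m}|t^mX^\gamma e^{-tG_\nu}f|\lesssim W^{(m),*}_{2^{-j}}f .
$$
The pointwise bound then yields the inequality at once.

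\textbf{Upper bound on the vector-field quantity.} For $t\in[2^{-j},2^{-j+1}]$ we apply the decomposition Lemma \ref{lem:decomposition_of_a_distibution/function} with exponent $m$ and write
$$
X^\gamma e^{-tG_\nu}f=\frac{1}{(m-1)!}\int_0^1 X^\gamma e^{-tG_\nu}W^{(m)}_s f\,\frac{ds}{s}+\sum_{k<m}\frac1{k!}X^\gamma e^{-tG_\nu}W_1^{(k)}f .
$$
We then estimate the integral piece by splitting it at $s=t$.

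\emph{Case $s\le t$.} We write $e^{-tG_\nu}W_s^{(m)}=e^{-(t-s/2)G_\nu}W_s^{(m)}$ and use Lemma \ref{lem:heat_operator_and_derivative}(ii) to get
$$
|t^mX^\gamma e^{-tG_\nu}W_s^{(m)}f|\lesssim t^{m-|\gamma|/2}\,e^{-cbtG_\nu}|W_s^{(m)}f|\lesssim e^{-cbtG_\nu}|W_s^{(m)}f|.
$$
The last step uses $|\gamma|\le 2m$ and $t<2$. Since this gives no decay in $s/t$, we instead move part of the power onto $f$. We write $W_s^{(m)}=s^m G_\nu^m e^{-sG_\nu}$ and let $X^\gamma$ fall on the heat operator at time about $t$, which gives the factor $(s/t)^m\,t^{m-|\gamma|/2}$. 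To control $s^mG_\nu^me^{-sG_\nu}f$ at time $s$, we transfer it through $e^{-sG_\nu/2}$ to $|W^{(m)}_{s/2}f|$, in the spirit of Step 1 of Theorem \ref{thm:independence_of_parameters_Triebel_Lizorkin}.

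\emph{Case $s>t$.} We keep all derivatives and write $t^mX^\gamma e^{-tG_\nu}s^mG_\nu^me^{-sG_\nu}$. Lemma \ref{lem:heat_operator_and_derivative}(ii) applied at time $s$ gives the bound $(t/s)^{m}\,e^{-cbsG_\nu}|W^{(m)}_{s/2}f|$, after absorbing $t^{-|\gamma|/2}\le s^{-|\gamma|/2}$ suitably; the power of $t$ is chosen so that $|\gamma|/2\le m$ is used.

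\textbf{Summation.} After discretising $s\sim2^{-n}$, both cases leave a kernel $2^{-|j-n|\epsilon}$. The weight ratio $(V_\nu(2^{-n})/V_\nu(2^{-j}))^{\alpha/Q}$ costs at most $2^{|j-n|\alpha/2}$ in one direction by local doubling, and costs nothing in the other direction since $V_\nu(t)$ is increasing. We sum this using Lemma \ref{lem:Cor_of_Schur_lemma}(i), which needs $m>\alpha/2$. The heat operators, which are pulled outside using Lemma \ref{lemma:ball_volume_and_heat_operator_commutation}, are then removed by Proposition \ref{prop:semigroup_version_of_Fefferman_Stein_theorem}. The supremum over $t\in[2^{-j},2^{-j+1}]$ is handled by Lemma \ref{lem:heat_operator_and_derivative}(i), which dominates $e^{-ctG_\nu}|g|$ by $e^{-c'2^{-j}G_\nu}|g|$ uniformly in that range. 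The terms with $W_1^{(k)}$ are bounded pointwise by $e^{-cG_\nu}|f|$, and hence by $\|V_\nu^{-\alpha/Q+\eta}f\|_{L^p}$ with geometric factors $V_\nu(2^{-j})^{-\alpha/Q}/V_\nu^{-\alpha/Q}\lesssim 2^{j\alpha/2}$. Those factors are compensated by $t^m$ with $m>\alpha/2$.

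\textbf{Main obstacle.} The hardest part is the $s\le t$ range, where the $t^m$ gain must be traded against derivatives of order up to $2m$ falling on the heat kernel at time $t$. The first approach to try is to absorb the $X^\gamma$ into $e^{-(t-s)G_\nu}$ with $t-s\ge t/2$ when $s\le t/2$, and to treat $s\in(t/2,t]$ trivially as a comparable scale.
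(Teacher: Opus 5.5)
\textbf{There is a genuine gap in your case $s\le t$}, which is the range you yourself flag as the main obstacle.

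The estimate you claim there is
$|t^mX^\gamma e^{-tG_\nu}W_s^{(m)}f|\lesssim (s/t)^m\,t^{m-|\gamma|/2}\,e^{-ctG_\nu}|W^{(m)}_{s/2}f|$.
It uses $G_\nu^m$ twice. The factor $t^{-m}$ that produces $(s/t)^m$ can only come from letting $G_\nu^m$ act on a heat operator at time $\sim t$. After that, $f$ carries only $e^{-sG_\nu}$, so no $W^{(m)}_{s/2}f$ is left. Heuristically, for $f$ concentrated where $G_\nu\sim 1/t$, your bound is off by a factor $(t/s)^m$.

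If you keep $W^{(m)}_{s/2}f$, the correct identity is $e^{-tG_\nu}W_s^{(m)}=2^m e^{-(t+s/2)G_\nu}W_{s/2}^{(m)}$, not $e^{-(t-s/2)G_\nu}W_s^{(m)}$. Lemma \ref{lem:heat_operator_and_derivative}(ii) then gives only the factor $t^{m-|\gamma|/2}\lesssim 1$, i.e.\ no decay in $s/t$. Your fallback of putting $X^\gamma$ on a heat operator of time $\ge t/2$ has the same defect. The paper gets exactly this too: $t^m(2^{-n}+2^{-j})^{-m}\sim 1$ for $n\ge j$.

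Consequently your summation step fails. It assumes the operator bounds give a kernel $2^{-|j-n|\epsilon}$ and says the weight ratio ``costs nothing'' for $n>j$. What is actually left is $\sum_{n\ge j}g_n$ for each $j$, and this is not bounded in $\ell^q$ by $(g_n)$.

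\textbf{The missing idea} is that for $n>j$ the decay must come from the weights, not the operator. The paper writes
$V_\nu(2^{-j})^{-\alpha/Q}=\bigl(V_\nu(2^{-n})/V_\nu(2^{-j})\bigr)^{\alpha/Q}V_\nu(2^{-n})^{-\alpha/Q}$.
It then uses the lower local doubling bound $V_\nu(x,2^{-n})/V_\nu(x,2^{-j})\lesssim 2^{-(n-j)d/2}$, which comes from the factor $r^{(d_1+d_2)/2}$ in \eqref{main:ball-vol-est}. This yields decay $2^{-(n-j)d\alpha/(2Q)}$, or $2^{(j-n)\alpha/2}$ in the classical case. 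This step, not only the choice $t_0=0$, is where $\alpha>0$ is needed. Lemma \ref{lem:Cor_of_Schur_lemma}(i) then applies with exponents $m-\alpha/2$ (resp.\ $m-d\alpha/(2Q)$) and $m$.

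\textbf{Alternative repair.} Your factor $(s/t)^m t^{m-|\gamma|/2}$ is correct if you measure against scale $t$ rather than $s$. Indeed $t^mX^\gamma e^{-tG_\nu}W^{(m)}_sf=(2s)^mX^\gamma e^{-(t/2+s)G_\nu}W^{(m)}_{t/2}f$. Hence $|t^mX^\gamma e^{-tG_\nu}W^{(m)}_sf|\lesssim (s/t)^m e^{-ctG_\nu}|W^{(m)}_{t/2}f|$, using Lemma \ref{lem:heat_operator_and_derivative}(i) to make the time uniform in $s\in(0,t]$. Since $\int_0^t (s/t)^m\,ds/s=1/m$, the whole range $s\le t$ is bounded by a same-scale term, with no weight ratio needed.

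Your $s>t$ case and the $W_1^{(k)}$ terms are fine and match the paper. The justification ``$t^{-|\gamma|/2}\le s^{-|\gamma|/2}$'' is reversed, though. The real reason is that $X^\gamma$ lands on $e^{-(t+s/2)G_\nu}$ with $t+s/2\sim s$.
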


We shall only prove Theorem \ref{thm:vector_field_characterisation_Triebel_Lizorkin_norm}. The proof of Theorem \ref{thm:vector_field_characterisation_Besov_norm} can be verified following similar ideas. 

\begin{proof}[Proof of Theorem \ref{thm:vector_field_characterisation_Triebel_Lizorkin_norm}] 
As earlier, we shall only prove the non-classical case. The classical case can be proved by following similar steps and with lesser difficulty. We only need to prove that 
\begin{align} 
\label{claim:vector_field_characterisation_Triebel_Lizorkin_norm}
\left\|\left(\sum_{j=1}^\infty (V_{\nu}(2^{-j})^{-\alpha/Q} \, V_{\nu}^{\eta} \, W_{2^{-j}}^{(m), *} f)^q \right)^{\frac{1}{q}}\right\|_{L^p(d\mu_{\nu})} \lesssim \|f\|_{\tilde{F}^{p,q}_{\alpha, \eta}(d\mu_{\nu})}. 
\end{align}

Recall from Lemma \ref{lem:decomposition_of_a_distibution/function} that we have the following decomposition: 
\begin{align} \label{lem:decomposition_of_a_distibution/function-again}
f = \sum_{k=0}^{m-1} \frac{1}{k!} \, W_1^{(k)} f + \frac{1}{(m-1)!} \sum_{n=1}^\infty f_n, 
\end{align}
where $f_n = \int_{2^{-n}}^{2^{-n+1}} W_s^{(m)} f \, \frac{ds}{s}$.

\medskip 
Note that for any $|\gamma| \leq 2m, \, t \in [2^{-j}, 2^{-j+1}]$ and $k \in \{0,1,\ldots,m-1\}$, we have 
\begin{align*}
|X^{\gamma} \, e^{-tG_\nu} \, W_1^{(k)} f| &= |X^{\gamma} \, e^{-tG_\nu} \, G_\nu^k \, e^{-G_\nu} f| = |X^{\gamma} \, G_\nu^k \, e^{-(t+1)G_\nu} f| \\
& \lesssim (t+1)^{-k-|\gamma|/2} \, e^{-b \, (t+1)G_\nu} |f| \, \lesssim  e^{-2b^2 \, G_\nu} |f|,
\end{align*}
where the last two steps follow from Lemma \ref{lem:heat_operator_and_derivative} (with $\kappa' = 2b, \, \kappa = 4b$ and $t_0 = 1/2$). 

\medskip 
Now, for each term in the finite sum in \eqref{lem:decomposition_of_a_distibution/function-again}, that is, for each $W_1^{(k)} f$, we have 
\begin{align}
\label{claim-part-1:vector_field_characterisation_Triebel_Lizorkin_norm} 
& \left\|\left(\sum_{j=1}^{\infty} (V_{\nu}(2^{-j})^{-\alpha/Q} V_{\nu}^{\eta} \, W_{2^{-j}}^{(m),*} (W_1^{(k)} f))^q \right)^{1/q}\right\|_{L^p(d\mu_{\nu})} \\ 
\nonumber & \lesssim \left\|\left(\sum_{j=1}^{\infty} (2^{j\alpha/2} V_{\nu}^{-\alpha/Q+ \eta} \, 2^{-jm} \, e^{- 2b^2\, G_\nu} |f|)^q \right)^{1/q}\right\|_{L^p(d\mu_{\nu})} \\ 
\nonumber & \lesssim \left\|e^{-4b^{3}G_{\nu}}\left(\sum_{j=1}^{\infty} (2^{j\alpha/2}V_{\nu}^{-\alpha/Q + \eta} 2^{-jm} |f|)^q \right)^{1/q}\right\|_{L^p(d\mu_{\nu})} \\ 
\nonumber & \lesssim \left\|\left(\sum_{j=1}^{\infty} (2^{j\alpha/2}V_{\nu}^{-\alpha/Q + \eta} 2^{-jm} |f|)^q \right)^{1/q}\right\|_{L^p(d\mu_{\nu})} \\ 
\nonumber & \lesssim \left\| \left(\sum_{j=1}^{\infty} 2^{-j q(m-\frac{\alpha}{2})} \right)^{1/q} V_{\nu}^{-\alpha/Q + \eta} \, f \right\|_{L^p(d\mu_{\nu})} \\
\nonumber & \lesssim \left\|V_{\nu}^{-\alpha/Q + \eta} f \right\|_{L^p(d\mu_{\nu})}.
\end{align}

On the other hand, for each $f_n$ appearing in \eqref{lem:decomposition_of_a_distibution/function-again}, note that 
\begin{align*}
X^{\gamma} \, e^{-tG_\nu} \, f_n &= X^{\gamma} \, e^{-(2^{-n-1}+t) G_\nu} \int_{2^{-n}}^{3 \times 2^{-n-1}} (sG_\nu)^m \, e^{-(s-2^{-n-1})G_\nu} f \, \frac{ds}{s} \\ 
& \quad + X^{\gamma} \, e^{-(2^{-n}+t)G_\nu} \int_{3 \times 2^{-n-1}}^{2^{-n+1}} (sG_\nu)^m \, e^{-(s-2^{-n})G_\nu} f \, \frac{ds}{s} \\ 
& = X^{\gamma} \, e^{-(2^{-n-1}+t)G_\nu} \int_{2^{-n-1}}^{2^{-n}} (s+2^{-n-1})^{m-1} \, G_\nu^m \, e^{-sG_\nu} f \, ds \\ 
& \quad + X^{\gamma} \, e^{-(2^{-n}+t)G_\nu} \int_{2^{-n-1}}^{2^{-n}} (s+2^{-n})^{m-1} \, G_\nu^m \, e^{-sG_\nu} f \, ds \\
& =: I_1 + I_2, 
\end{align*}
and therefore, with $|\gamma|\leq 2m$ and $t \in [2^{-j},2^{-j+1}]$, by using Lemma \ref{lem:heat_operator_and_derivative} (with $\kappa' = b, \, \kappa = 2b$ and $t_0 = 2^{-j}$), we have  
\begin{align*}
|I_1| & \lesssim (2^{-n-1}+t)^{-|\gamma|/2} \, e^{-b \, (2^{-n-1}+t)G_\nu} \int_{2^{-n-1}}^{2^{-n}} |(s+2^{-n-1})^{m-1} \, G_\nu^m \, e^{-sG_\nu} f| \, ds \\ 
& \lesssim (2^{-n-1}+t)^{-m} \, e^{-b \, (2^{-n-1}+t)G_\nu} \int_{2^{-n-1}}^{2^{-n}} |(sG_\nu)^m \, e^{-sG_\nu} f| \, \frac{ds}{s} \\
& \lesssim (2^{-n}+2^{-j})^{-m} \, e^{-b \, 2^{-n-1} G_{\nu}} \, e^{- 2b^{2}2^{-j} G_\nu} \int_{2^{-n-1}}^{2^{-n}} |W_s^{(m)} f| \, \frac{ds}{s}. 
\end{align*}
Similarly, one can show that for $I_2$, the following estimate holds true.
\begin{align*}
|I_2| \lesssim (2^{-n}+2^{-j})^{-m} \, e^{-b \, 2^{-n} G_{\nu}} \, e^{- 2b^{2}2^{-j} G_\nu} \int_{2^{-n-1}}^{2^{-n}} |W_s^{(m)} f| \, \frac{ds}{s}.    
\end{align*} 

Thus, we have 
$$
\sup_{t \in [2^{-j}, 2^{-j+1}]} \max_{|\gamma| \leq 2m} |X^{\gamma} \, e^{-tG_\nu} \, f_n| \lesssim 2^{m \min \{j,n\}} \, e^{-2b^{2} \, 2^{-j} G_\nu} F_n, 
$$
where 
\begin{align*}
F_n &= e^{-b \, 2^{-n-1}G_\nu} \int_{2^{-n-1}}^{2^{-n}} |W_s^{(m)} f| \, \frac{ds}{s} + e^{-b \, 2^{-n}G_\nu} \int_{2^{-n-1}}^{2^{-n}} |W_s^{(m)} f| \, \frac{ds}{s}. 
\end{align*}

Therefore, making use of Lemma \ref{lemma:ball_volume_and_heat_operator_commutation} and Proposition \ref{prop:semigroup_version_of_Fefferman_Stein_theorem}, we get 
\begin{align*}
& \left\|\left(\sum_{j=1}^{\infty} (V_{\nu}(2^{-j})^{-\alpha/Q}  V_{\nu}^{\eta} \, \sum_{n=1}^\infty W_{2^{-j}}^{(m),*} f_n)^q\right)^{1/q}\right\|_{L^p(d\mu_{\nu})} \\ 
& \lesssim \left\|\left(\sum_{j=1}^{\infty} (V_{\nu}(2^{-j})^{-\alpha/Q}  V_{\nu}^{\eta} \, 2^{-jm} \, e^{- 2b^{2} \, 2^{-j} G_\nu} \sum_{n=1}^{\infty} 2^{m \min\{j,n\}} F_n)^q\right)^{1/q}\right\|_{L^p(d\mu_{\nu})}  \\ 
& \lesssim \left\|\left(\sum_{j=1}^{\infty} (V_{\nu}(2^{-j})^{-\alpha/Q}  V_{\nu}^{\eta} \, 2^{-jm} \sum_{n=1}^{\infty} 2^{m \min\{j,n\}} F_n)^q\right)^{1/q}\right\|_{L^p(d\mu_{\nu})} \\ 
& \lesssim \left\|\left(\sum_{j=1}^{\infty} \left(2^{-jm} \sum_{n=1}^{\infty} 2^{m \min\{j,n\}} \left( \frac{V_{\nu}(2^{-n})}{V_{\nu}(2^{-j})} \right)^{\alpha/Q} g_n\right)^q\right)^{1/q}\right\|_{L^p(d\mu_{\nu})}, 
\end{align*}
where $g_n = V_{\nu}(2^{-n})^{-\alpha/Q} \, V_{\nu}^{\eta} \, F_n$. We shall also write $g_{n,1} = 2^{-n\alpha/2} \, g_n$ and $g_{n,2} = 2^{-nd\alpha/(2Q)} \, g_n$. Then, making use of the two types of ball volume estimates, for $1 \leq n \leq j$ and $n > j \geq 1$, the final estimate is further dominated by 
\begin{align*} 
& \left\|\left(\sum_{j=1}^{\infty}  \left(2^{-jm} \sum_{n=1}^{j} 2^{m \min\{j,n\}} \, 2^{(j-n)\alpha/2} g_n\right)^{q} \right)^{1/q} \right\|_{L^p(d\mu_{\nu})} \\
& \quad + \left\|\left(\sum_{j=1}^\infty \left(2^{-jm} \sum_{n=j+1}^{\infty} 2^{m \min\{j,n\}} \, 2^{(j-n)d\alpha/(2Q)} g_n \right)^q \right)^{1/q}\right\|_{L^p(d\mu_{\nu})}\\ 
& \leq \left\|\left(\sum_{j=1}^{\infty}  \left(2^{-j(m-\alpha/2)} \sum_{n=1}^{\infty} 2^{m \min\{j,n\}} g_{n,1}\right)^{q} \right)^{1/q} \right\|_{L^p(d\mu_{\nu})} \\
& \quad + \left\|\left( \sum_{j=1}^\infty \left(2^{-j(m-d\alpha/(2Q))} \sum_{n=1}^{\infty} 2^{m \min\{j,n\}} g_{n,2} \right)^q \right)^{1/q}\right\|_{L^p(d\mu_{\nu})} \\ 
& \lesssim \left\|\left(\sum_{n=1}^{\infty}  \left(2^{n\alpha/2} g_{n,1}\right)^{q} \right)^{1/q} \right\|_{L^p(d\mu_{\nu})} + \left\|\left(\sum_{n=1}^\infty \left(2^{nd\alpha/(2Q)} g_{n,2} \right)^q \right)^{1/q}\right\|_{L^p(d\mu_{\nu})} \\ 
& \lesssim \left\|\left(\sum_{n=1}^{\infty}  (V_{\nu}(2^{-n})^{-\alpha/Q} V_{\nu}^{\eta} \, F_n)^q  \right)^{1/q} \right\|_{L^p(d\mu_{\nu})},
\end{align*}
where we have used Lemma \ref{lem:Cor_of_Schur_lemma}.

\medskip 
One can estimate the first term in $F_n$ easily as follows: 
\begin{align*}
&e^{-b \, 2^{-n-1}G_\nu} \int_{2^{-n-1}}^{2^{-n}} |W_s^{(m)} f| \, \frac{ds}{s} \\ 
& \sim e^{-b \, 2^{-n-1}G_\nu} \int_{2^{-n-1}}^{2^{-n}} |e^{-(s-2^{-n-2)}G_\nu} \, (2^{-n-2}G_\nu)^m \, e^{-2^{-n-2}G_\nu} f| \, \frac{ds}{s} \\
& \lesssim e^{-b \, 2^{-n-1}G_\nu} \int_{2^{-n-1}}^{2^{-n}} e^{-3b \, 2^{-n-2} G_\nu}|W_{2^{-n-2}}^{(m)} f| \, \frac{ds}{s} \\ 
& \sim e^{-\frac{5}{4}b \, 2^{-n}G_\nu} |W_{2^{-n-2}}^{(m)} f|, 
\end{align*}
where the second last estimate follows from Lemma \ref{lem:heat_operator_and_derivative} (with $\kappa' = \frac{1}{4}, \, \kappa = \frac{3}{4}$ and $t_0 = 2^{-n}$). Similarly, one can show that the second term in $F_{n}$ can be dominated by $e^{-\frac{7}{4}b \, 2^{-n}G_\nu} |W_{2^{-n-2}}^{(m)} f|$.    

\medskip 
Using it in the above estimate, one can make use of Lemma \ref{lemma:ball_volume_and_heat_operator_commutation} and Proposition \ref{prop:semigroup_version_of_Fefferman_Stein_theorem} (as seen in several earlier computations) to deduce that 
\begin{align}
\label{claim-part-2:vector_field_characterisation_Triebel_Lizorkin_norm} 
& \left\|\left(\sum_{j=1}^{\infty} (V_{\nu}(2^{-j})^{-\alpha/Q}  V_{\nu}^{\eta} \, \sum_{n=1}^\infty W_{2^{-j}}^{(m),*} f_n)^q\right)^{1/q}\right\|_{L^p(d\mu_{\nu})} \\ 
\nonumber & \lesssim \left\|\left(\sum_{n=1}^{\infty}  (V_{\nu}(2^{-n})^{-\alpha/Q} V_{\nu}^{\eta} \,  |W_{2^{-n}}^{(m)} f|)^q  \right)^{1/q} \right\|_{L^p(d\mu_{\nu})}. 
\end{align}

Combining estimates \eqref{claim-part-1:vector_field_characterisation_Triebel_Lizorkin_norm} and \eqref{claim-part-2:vector_field_characterisation_Triebel_Lizorkin_norm}, we have \eqref{claim:vector_field_characterisation_Triebel_Lizorkin_norm}, and this completes the proof of Theorem \ref{thm:vector_field_characterisation_Triebel_Lizorkin_norm}. 
\end{proof}


\subsection{Recursive characterisation} \label{subsec:Recursive-characterisations} 

Following the idea of \cite[Theorem 4.5]{Bruno_Marco_Vallarino_Besov_TL_20}, one can easily prove the following recursive characterisation for classical Besov and Triebel--Lizorkin spaces, for $\alpha > 0$, and therefore we don't write its proof. 

\begin{theorem} \label{thm:recursive_characterisation_classical_spaces}
Let $\alpha > 0, \, \eta \in \R$ and $q \in [1, \infty]$.
\begin{enumerate}[(i)]
\item Let $p \in [1, \infty]$, then $f \in B^{p, \, q}_{\alpha+1, \eta}(d\mu_{\nu})$ if any only if $f, \, X_{j}f, \, X_{j, k}f \in B^{p, \, q}_{\alpha, \eta}(d\mu_{\nu})$ for every $1 \leq j \leq d_1, \, 1 \leq k \leq d_2$, and we have following norm equivalence: 
\begin{align*}
\|f\|_{B^{p, \, q}_{\alpha+1, \eta}(d\mu_{\nu})} \sim \sum_{j=1}^{d_1} \|X_{j}f\|_{B^{p, \, q}_{\alpha, \eta}(d\mu_{\nu})} + \sum_{j=1}^{d_1} \sum_{k=1}^{d_2}\|X_{j,k} f\|_{B^{p, \, q}_{\alpha, \eta}(d\mu_{\nu})} + \|f\|_{B^{p, \, q}_{\alpha, \eta}(d\mu_{\nu})}.
\end{align*}

\item Let $p \in (1, \infty)$, then $f \in F^{p, \, q}_{\alpha+1, \eta}(d\mu_{\nu})$ if any only if $f, \, X_{j}f, \, X_{j, k}f \in F^{p, \, q}_{\alpha, \eta}(d\mu_{\nu})$ for every $1 \leq j \leq d_1, \, 1 \leq k \leq d_2$, and we have following norm equivalence: 
\begin{align*}
\|f\|_{F^{p, \, q}_{\alpha+1, \eta}(d\mu_{\nu})} \sim \sum_{j=1}^{d_1} \|X_{j}f\|_{F^{p, \, q}_{\alpha, \eta}(d\mu_{\nu})} + \sum_{j=1}^{d_1} \sum_{k=1}^{d_2}\|X_{j,k} f\|_{F^{p, \, q}_{\alpha, \eta}(d\mu_{\nu})} + \|f\|_{F^{p, \, q}_{\alpha, \eta}(d\mu_{\nu})}.
\end{align*}
\end{enumerate}
\end{theorem}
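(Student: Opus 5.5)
We will prove the two inequalities in each equivalence separately, using the vector field characterisation (Theorems \ref{thm:vector_field_characterisation_Besov_norm} and \ref{thm:vector_field_characterisation_Triebel_Lizorkin_norm}) for the direction ``$\gtrsim$'' and the heat semigroup characterisation with a suitable choice of $m$ (Theorems \ref{thm:independence_of_parameters_Besov} and \ref{thm:independence_of_parameters_Triebel_Lizorkin}) for the direction ``$\lesssim$''. We treat only the Triebel--Lizorkin case (ii); the Besov case follows the same scheme with the $L^p$ norm taken inside the sum, using the Besov analogues of the earlier results. Throughout, write $Y$ for any of the vector fields $X_j, X_{j,k}$.

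\emph{Step 1: $\|Yf\|_{F^{p,q}_{\alpha,\eta}} + \|f\|_{F^{p,q}_{\alpha,\eta}} \lesssim \|f\|_{F^{p,q}_{\alpha+1,\eta}}$.} The bound for $\|f\|_{F^{p,q}_{\alpha,\eta}}$ is immediate from the definitions, since $t^{-\alpha/2} \le t^{-(\alpha+1)/2}$ on $(0,1)$ and Theorem \ref{thm:independence_of_parameters_Triebel_Lizorkin} lets us use the same $m$ for both norms. For $Yf$, choose $m$ with $m>(\alpha+1)/2$ and use the characterisation of Theorem \ref{thm:independence_of_parameters_Triebel_Lizorkin}. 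We write
\[
W^{(m)}_t Yf=(tG_\nu)^m e^{-tG_\nu}Yf .
\]
Splitting $e^{-tG_\nu}=e^{-\frac t2 G_\nu}e^{-\frac t2 G_\nu}$ does not help, because $Y$ does not commute with $G_\nu$. Instead, we apply Lemma \ref{lem:heat_operator_and_derivative}(ii) (the estimate for $G_\nu^k e^{-tG_\nu}X^\gamma$) after inserting the decomposition of Lemma \ref{lem:decomposition_of_a_distibution/function} for $f$, with $f_n=\int_{2^{-n}}^{2^{-n+1}}W_s^{(m')}f\,\frac{ds}{s}$, exactly as in the proof of Theorem \ref{thm:vector_field_characterisation_Triebel_Lizorkin_norm}. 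This yields bounds of the form
\[
|W^{(m)}_{t}Yf_n|\lesssim 2^{\frac12\min\{j,n\}}\,2^{-m|j-n|_+}\,e^{-cb2^{-j}G_\nu}F_n \qquad (t\sim 2^{-j}),
\]
where the gain $t^{-1/2}$ versus $s^{-1/2}$ is balanced using $t^m$ against $s^{m'}$. Lemma \ref{lem:Cor_of_Schur_lemma}(i), with $\gamma=\alpha/2$ and $\eta$ replaced by $m$ (respectively $m'$), together with Proposition \ref{prop:semigroup_version_of_Fefferman_Stein_theorem} and Lemma \ref{lemma:ball_volume_and_heat_operator_commutation}, then converts this into the $F^{p,q}_{\alpha+1,\eta}$ seminorm. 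The low-frequency term $\|V_\nu^\eta e^{-t_0G_\nu}Yf\|_{L^p}$ is handled via $|Ye^{-G_\nu}f|\lesssim e^{-bG_\nu}|f|$.

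\emph{Step 2: the converse.} By Theorem \ref{thm:vector_field_characterisation_Triebel_Lizorkin_norm} with $m\ge 1$ satisfying $m>(\alpha+1)/2$, it suffices to bound $\sup_{t\sim2^{-j}}|t^mX^\gamma e^{-tG_\nu}f|$ for $|\gamma|\le 2m$, weighted by $2^{j(\alpha+1)/2}$. The key identity is $G_\nu=-\sum Y^2-2\nu\cdot X'$. For $|\gamma|\ge1$ we write $X^\gamma=X^{\gamma'}Y$ and commute $Y$ past $e^{-tG_\nu}$. Since $G_\nu$ does not commute with $Y$, we cannot do this directly; instead we use the vector field characterisation at level $\alpha$ for $Yf$, namely $t^{m}X^{\gamma'}e^{-tG_\nu}Yf$, and control the commutator $[e^{-tG_\nu},Y]$ by Duhamel's formula:
\[
[e^{-tG_\nu},Y]=\int_0^t e^{-(t-s)G_\nu}[Y,G_\nu]\,e^{-sG_\nu}\,ds .
\]
Here $[X_j,G]$ involves $X_{j,k}\partial_{x''_k}$, and $\partial_{x''_k}=[X_j,X_{j,k}]$ is of order two. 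Heat kernel bounds on these terms, using Lemma \ref{lem:heat_operator_and_derivative}(ii), give a commutator of size $t^{-1/2}$ times a heat operator. These terms are absorbed via $t^{1/2}\cdot t^{-(\alpha+1)/2}=t^{-\alpha/2}$ into the $F^{p,q}_{\alpha,\eta}$ norms of $Yf$ and $f$. The zeroth-order term, $|\gamma|=0$, and the term $\|V_\nu^\eta f\|_{L^p}$ are controlled directly by $\|f\|_{F^{p,q}_{\alpha,\eta}}$, since $\alpha>0$. Alternatively, we can bypass the commutators by writing $(tG_\nu)^{m}=t^{1/2}\,(tG_\nu)^{m-1}\,t^{1/2}(-\sum Y^2-2\nu\cdot X')$; each $t^{1/2}Y^2$ term becomes $t^{1/2}Y(\cdot)$ applied to $Yf$ after moving one $Y$ through $(tG_\nu)^{m-1}e^{-tG_\nu}$ by the same commutator argument.

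The main obstacle is this non-commutativity of $Y$ with $G_\nu$ (and with the drift and the $V_\nu$ weights), i.e.\ showing that the commutator errors are of lower order uniformly for $t<1$. We expect the cleanest route to be the second one, which passes everything through $(tG_\nu)^m$ and Lemma \ref{lem:heat_operator_and_derivative}(ii), with $V_\nu^\eta$ handled by Lemma \ref{lemma:ball_volume_and_heat_operator_commutation}, as in \cite[Theorem 4.5]{Bruno_Marco_Vallarino_Besov_TL_20}.
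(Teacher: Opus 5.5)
The paper gives no proof here; it only points to \cite[Theorem 4.5]{Bruno_Marco_Vallarino_Besov_TL_20}. Your Step 1 is a sound outline of $\|Yf\|_{F^{p,q}_{\alpha,\eta}}\lesssim\|f\|_{F^{p,q}_{\alpha+1,\eta}}$, apart from small slips:
\begin{itemize}
\item the low-frequency term needs $|e^{-t_0G_\nu}Yf|\lesssim e^{-bt_0G_\nu}|f|$ (second estimate of Lemma~\ref{lem:heat_operator_and_derivative}(ii) with $k=0$), not a bound on $Ye^{-G_\nu}f$, followed by $\|V_\nu^{\eta}f\|_{L^p(d\mu_\nu)}\lesssim\|f\|_{F^{p,q}_{\alpha+1,\eta}(d\mu_\nu)}$ from Theorem~\ref{thm:independence_of_parameters_Triebel_Lizorkin} with $t_0=0$;
\item the two exponents in Lemma~\ref{lem:Cor_of_Schur_lemma}(i) should be $m-\alpha/2$ and $m+\frac12$.
\end{itemize}

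The genuine gap is in Step 2. Both of your routes move $Y$ across $e^{-tG_\nu}$ (or across $(tG_\nu)^{m-1}e^{-tG_\nu}$) and need the commutator errors to be of lower order. They are not. One has $[X_j,G_\nu]=-2\sum_k X_{j,k}\partial_{x''_k}$ and $[X_{j,k},G_\nu]=2(X_j+\nu_j)\partial_{x''_k}$, and $\partial_{x''_k}=[X_j,X_{j,k}]$ is of order two. So $[Y,G_\nu]$ has the same homogeneous order as $YG_\nu$, and Duhamel gives $[e^{-tG_\nu},Y]$ of size $t^{-1/2}$. That is exactly the size of $e^{-tG_\nu}Y$ and of $Ye^{-tG_\nu}$ separately, so there is no extra $t^{1/2}$ to trade against $t^{-(\alpha+1)/2}$. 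At top order, $|\gamma'|=2m-1$, your estimates bound $t^mX^{\gamma'}[e^{-tG_\nu},Y]f$ only by $O(1)\,e^{-ctG_\nu}|f|$, with no derivative on $f$. Localising $f$ at scale $t$ does not help. Summing this against $2^{j(\alpha+1)/2}$ would therefore require $\|f\|_{F^{p,q}_{\alpha+1,\eta}}$ itself, which is circular.

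The missing observation is that no commutation is needed. Do not pass to the vector-field characterisation at level $\alpha+1$, where $Y$ necessarily sits to the left of the semigroup. Instead use the heat characterisation of Theorem~\ref{thm:independence_of_parameters_Triebel_Lizorkin} with $m>(\alpha+1)/2$, and apply $G_\nu$ to $f$ first. Since $G_\nu f=-\sum_Y Y(Yf)-2\nu\cdot X'f$,
\[
W_t^{(m)}f=-\sum_Y t\,(tG_\nu)^{m-1}e^{-tG_\nu}Y(Yf)-2t\,(tG_\nu)^{m-1}e^{-tG_\nu}(\nu\cdot X'f).
\]
Your ``alternative'' already starts from this identity; just leave $Y$ to the right of the semigroup. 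Then $t^{1/2}(tG_\nu)^{m-1}e^{-tG_\nu}Y$ is controlled directly by the second estimate of Lemma~\ref{lem:heat_operator_and_derivative}(ii), the same tool you use in Step 1.

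Now insert Lemma~\ref{lem:decomposition_of_a_distibution/function} for $g=Yf$, not for $f$.
\begin{itemize}
\item For $s\le t$: $|t(tG_\nu)^{m-1}e^{-tG_\nu}YW_s^{(M)}g|\lesssim t^{1/2}e^{-btG_\nu}|W_s^{(M)}g|$. After weighting this gives the kernel $(s/t)^{\alpha/2}$, which is summable precisely because $\alpha>0$.
\item For $s>t$: write $W_s^{(M)}g=2^Me^{-\frac s2G_\nu}W_{s/2}^{(M)}g$ and apply the first estimate of Lemma~\ref{lem:heat_operator_and_derivative}(ii) to $G_\nu^{m-1}Ye^{-\frac s2G_\nu}$. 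This gives the kernel $(t/s)^{m-(\alpha+1)/2}$.
\end{itemize}
The drift term carries an extra $t^{1/2}$. The pieces $W_1^{(k)}g$ are controlled by $\|V_\nu^\eta e^{-\frac12G_\nu}Yf\|_{L^p(d\mu_\nu)}$. The summation (Lemma~\ref{lem:Cor_of_Schur_lemma}, Proposition~\ref{prop:semigroup_version_of_Fefferman_Stein_theorem}, Lemma~\ref{lemma:ball_volume_and_heat_operator_commutation}) then goes exactly as in the proof of Theorem~\ref{thm:vector_field_characterisation_Triebel_Lizorkin_norm}.
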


We don't think that a similar characterisation holds true for non-classical spaces, but right now, we do not have any explicit argument for it. 


\subsection{Comparison of classical Besov spaces} \label{subsec:comparison_of_classical_Besov_spaces} 

In a recent work, Zhao et al. \cite{Geometric_topics_related_to_Besov_type_spaces_on_the_Grushin_setting_2005} also defined and studied Besov spaces for the Grushin operator. For $\alpha > 0, \, p \in [1,\infty), \,  q \in [1, \infty]$, they defined their Besov spaces $B^{G, \alpha, 1}_{p,\, q}(dx)$ as the space of all $f \in L^p(dx)$ such that
$$
\|f\|_{B^{G, \alpha, 1}_{p,\, q}(dx)} := \|f\|_{L^{p}(dx)} + N^{G, \alpha, 1}_{p, \, q}(f) < \infty, 
$$
where 
$$
N^{G, \alpha, 1}_{p, \, q}(f) := \left( \int_{0}^{\infty} \left(\int_{\R^{d_1+d_2}} e^{-tG}(|f-f(x)|^{p})(x) \, dx \right)^{q/p} \, \frac{dt}{t^{q \alpha/2+1}}\right)^{1/q},
$$
with appropriate changes when $q = \infty$. The authors investigated two types of Besov classes associated with the Grushin semigroup and the fractional Grushin semigroup, and established a relation between them. They also established some isoperimetric inequalities for the fractional perimeter in this setup and studied some embedding theorems. Furthermore, they proved a characterisation of the above norm by differences. Let $B(y,t)$ denote the open ball with respect to the Grushin metric $\rho$ (see the definition in \eqref{Grushin_distance}) and $|B(y,t)|$ its Lebesgue measure. They defined the Besov space $B^{G, \alpha, 2}_{p, \,q}(dx)$ as the space of all $f \in L^{p}(dx)$ such that 
$$
\|f\|_{B^{G, \alpha, 2}_{p, \, q}(dx)} := \|f\|_{L^{p}(dx)} + N^{G, \alpha, 2}_{p, \, q}(f) < \infty,
$$
where
\begin{align*} 
N^{G, \alpha, 2}_{p, \, q}(f) := \left(\int_{0}^{\infty} \left(\int_{\R^{d_1+d_2}} \int_{B(y,t)} \frac{|f(y)-f(x)|^{p}}{t^{p\alpha} \, |B(y,t)|} \, dx \, dy\right)^{q/p} \, \frac{dt}{t} \right)^{1/q},
\end{align*}
with appropriate changes when $q = \infty$. It was shown in \cite{Geometric_topics_related_to_Besov_type_spaces_on_the_Grushin_setting_2005} that both these Besov spaces $B^{G, \alpha, 1}_{p, \, q}(dx)$ and $B^{G, \alpha, 2}_{p, \, q}(dx)$ coincide with norm equivalence. 

\medskip 
We do not know whether our classical Besov spaces $B^{p, \, q}_{\alpha, \, 0}(dx)$ for the Grushin operator (without drift) coincide with the Besov spaces $B^{G, \alpha, 2}_{p, \, q}(dx)$ of \cite{Geometric_topics_related_to_Besov_type_spaces_on_the_Grushin_setting_2005} or not, but we shall now show that their Besov spaces $B^{G, \alpha, 2}_{p, \, q}(dx)$ do embed in our $B^{p, \, q}_{\alpha, \, 0}(dx)$. 

\begin{theorem} \label{thm:comparison_with_known_Besov_space}
For any $p \in [1,\infty), \, q \in [1, \infty]$ and $\alpha > 0$, we have $B^{G, \alpha, 2}_{p, \, q}(dx) \hookrightarrow B^{p, \, q}_{\alpha, 0}(dx)$. 
\end{theorem}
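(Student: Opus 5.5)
We have to show $\|f\|_{B^{p,q}_{\alpha,0}(dx)} \lesssim \|f\|_{L^p(dx)} + N^{G,\alpha,2}_{p,q}(f)$ for $f \in B^{G,\alpha,2}_{p,q}(dx)$, with $\nu=0$ and $\eta=0$. We use the equivalent norm of Theorem \ref{thm:independence_of_parameters_Besov}, taking $m=[\alpha/2]+1$ and $t_0=1/2$.

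\textbf{Low-frequency term.} Since $e^{-\frac12 G}$ is bounded on $L^p(dx)$,
\[
\|e^{-\frac12 G}f\|_{L^p(dx)}\lesssim \|f\|_{L^p(dx)} .
\]

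\textbf{Pointwise bound for $W_t^{(m)}f$.} Because $G^m$ annihilates constants, $G^m e^{-tG}1=0$. Hence
\[
W_t^{(m)}f(x)=\int \bigl(t^m\partial_t^m\text{-type kernel}\bigr)(x,y)\,(f(y)-f(x))\,dy .
\]
More precisely, $(tG)^m e^{-tG}=(-1)^m t^m \partial_t^m e^{-tG}$, and its kernel $K_t^{(m)}(x,y)$ obeys a Gaussian bound
\[
|K_t^{(m)}(x,y)|\lesssim |B(x,\sqrt t)|^{-1}e^{-c\rho(x,y)^2/t}.
\]
This follows from the time-derivative (analyticity) bounds for the Grushin heat kernel, the same input used in Lemma \ref{lem:heat_operator_and_derivative}; alternatively, write $t^m\partial_t^m H_t$ via the semigroup property and Gaussian bounds for $G^k H_t$. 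It also gives $\int K^{(m)}_t(x,y)\,dy=0$. Therefore
\[
|W_t^{(m)}f(x)|\lesssim \int |B(x,\sqrt t)|^{-1}e^{-c\rho(x,y)^2/t}|f(y)-f(x)|\,dy .
\]

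\textbf{Dyadic decomposition.} Split the $y$-integral into the annuli $\rho(x,y)<\sqrt t$ and $2^{k-1}\sqrt t\le\rho(x,y)<2^k\sqrt t$ for $k\ge1$. By doubling, $|B(x,\sqrt t)|^{-1}\lesssim 2^{kQ}|B(x,2^k\sqrt t)|^{-1}$, so
\[
|W_t^{(m)}f(x)|\lesssim\sum_{k\ge0}e^{-c4^{k}}2^{kQ}\,\frac{1}{|B(x,2^k\sqrt t)|}\int_{B(x,2^k\sqrt t)}|f(y)-f(x)|\,dy .
\]
Apply Jensen (Hölder) to each average, take $L^p(dx)$ norms, and use Minkowski in $k$. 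With $r=2^k\sqrt t$ this gives
\[
\|W_t^{(m)}f\|_{L^p}\lesssim \sum_k e^{-c4^k}2^{kQ}\,\Phi(2^k\sqrt t),\qquad \Phi(r):=\Bigl(\int\!\!\int_{B(x,r)}\frac{|f(y)-f(x)|^p}{|B(x,r)|}\,dy\,dx\Bigr)^{1/p}.
\]
This $\Phi$ has the ball centred at $x$ and the integration in $y$, whereas $N^{G,\alpha,2}_{p,q}$ centres the ball at $y$ and integrates in $x$. The two are reconciled by symmetry of $\rho$ (so $y\in B(x,r)\iff x\in B(y,r)$), by swapping the names of $x$ and $y$, and by the estimate $|B(x,r)|\sim|B(y,r)|$ for $\rho(x,y)<r$, which follows from \eqref{est:center_change_in_ball_volume}.

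\textbf{Assembling the Besov seminorm.} Multiply by $t^{-\alpha/2}$ and substitute $r=\sqrt t$, noting $dt/t=2\,dr/r$. Then
\[
t^{-\alpha/2}\Phi(2^k\sqrt t)=2^{k\alpha}(2^k r)^{-\alpha}\Phi(2^kr),
\]
and after a dilation in $r$ the $L^q(dr/r)$ norm over $(0,\infty)$ of $(2^kr)^{-\alpha}\Phi(2^kr)$ equals exactly $N^{G,\alpha,2}_{p,q}(f)$ up to a constant. By Minkowski in $L^q$,
\[
\mathcal B^{p,q}_{\alpha,0}(f)\lesssim\sum_k e^{-c4^k}2^{k(Q+\alpha)}N^{G,\alpha,2}_{p,q}(f)\lesssim N^{G,\alpha,2}_{p,q}(f).
\]
The case $q=\infty$ is identical, with a supremum in place of the $L^q(dr/r)$ norm.

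\textbf{Main obstacle.} The delicate point is the Gaussian bound for the kernel of $(tG)^me^{-tG}$ together with its vanishing integral. The integral vanishes because $e^{-tG}1=1$ (conservativeness), so $\partial_t^m\int H_t(x,y)\,dy=0$; differentiating under the integral sign is justified by the Gaussian bounds themselves. For the kernel bound I would use the standard fact that Gaussian heat kernel bounds on a doubling space imply bounds for $\partial_t^m H_t$ (Davies / Grigor'yan, via analyticity), quoting \cite{Analysis_of_degenerate_elliptic_opertators_Robinson} or the estimate already used for $G^k e^{-tG}$ in Lemma \ref{lem:heat_operator_and_derivative}.
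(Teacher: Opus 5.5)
Your proof is correct and follows essentially the same route as the paper. Both arguments write $W_t^{(m)}f(x)=\int (tG)^m H_t(x,y)\,(f(y)-f(x))\,dy$ using conservativeness, apply the Gaussian bound, split into annuli, and rescale the $t$-integral to recover $N^{G,\alpha,2}_{p,q}(f)$. Your variant differs only in minor ways: you apply Jensen to each normalised annular average centred at $x$ and then use Minkowski in $k$ in both $L^p$ and $L^q(dt/t)$, which avoids the paper's weighted H\"older step and its case split $q\le p$ versus $q>p$. Also, matching with $N^{G,\alpha,2}_{p,q}$ needs only the renaming $x\leftrightarrow y$, not the comparability $|B(x,r)|\sim|B(y,r)|$.
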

\begin{proof}
It is enough to prove that $\mathcal{B}^{p,q}_{\alpha, 0}(f)  \lesssim N^{G, \alpha, 2}_{p,\, q}(f)$. As usual, we shall write the details for the case when $1 \leq q < \infty$, as $q = \infty$ can be dealt with in the same manner with routine modifications. In the following computations, we shall make use of the estimate \eqref{est:heat_kernel_gradient_bounds} and the fact that the integral of the heat kernel is equal to $1$ for every $t > 0$. 

\medskip 
Let us take $m = [\alpha/2]+1$ and restrict our attention to $t \in (0,1)$. Note that 
\begin{align*}
\left| W_{t}^{(m)} f(x) \right| 
& = \left|(tG)^{m} \int_{\R^{d_1+d_2}} H_{t}(x,y) \, f(y) \, dy \right| \\ 
& = \left| \int_{\R^{d_1+d_2}} ((tG)^{m} H_{t}(x,y)) \, (f(y)-f(x)) \, dy\right| \\ 
& \lesssim \int_{\R^{d_1+d_2}} |B(y, \sqrt{t})|^{-1} \, e^{-b' \rho(x,y)^2/t} \, |f(y)-f(x)| \, dy  \\
& \lesssim \left(\int_{\R^{d_1+d_2}}|B(y, \sqrt{t})|^{-1} \, e^{-b'\rho(x,y)^2/t} \, dy\right)^{1/p'} \\
& \quad \times \left( \int_{\R^{d_1+d_2}} |B(y, \sqrt{t})|^{-1} \, e^{-b'\rho(x,y)^2/t} \, |f(y)-f(x)|^{p} \, dy \right)^{1/p} \\ 
& \lesssim \left( \int_{\R^{d_1+d_2}} |B(y, \sqrt{t})|^{-1} \, e^{-b'\rho(x,y)^2/t} \, |f(y)-f(x)|^{p} \, dy \right)^{1/p},  
\end{align*}
where in the last step we have used the well-known basic fact that 
$$ \sup_{x \in \R^{d_1+d_2}} \int_{\R^{d_1+d_2}}|B(y, \sqrt{t})|^{-1} \, e^{-b' \rho(x,y)^2/t} \, dy < \infty. $$

The above estimate implies that 
\begin{align*}
\|W_{t}^{(m)}f\|_{L^{p}(d\mu_{\nu})} 
& \lesssim \left(\int_{\R^{d_1+ d_2}} \int_{B(y, \sqrt{t})} \frac{|f(y)-f(x)|^{p}}{|B(y, \sqrt{t})|} \, dx  \, dy \right)^{1/p} \\
& \quad + \left(\int_{\R^{d_1+ d_2}} \sum_{j=0}^{\infty} \int_{2^{j}\sqrt{t} \leq \rho(x,y) < 2^{j+1}\sqrt{t}} \frac{|f(y)-f(x)|^{p} e^{-b'\rho(x,y)^2/t}}{|B(y, \sqrt{t})|} \, dx  \, dy \right)^{1/p} \\
& \lesssim \left(\int_{\R^{d_1+ d_2}} \int_{B(y, \sqrt{t})} \frac{|f(y)-f(x)|^{p}}{|B(y, \sqrt{t})|} \, dx  \, dy \right)^{1/p} \\
& \quad +  \left(\sum_{j=0}^{\infty} e^{-b' 2^{2j}} \, 2^{jQ} \int_{\R^{d_1+ d_2}} \int_{2^{j}\sqrt{t} \leq \rho(x,y) < 2^{j+1}\sqrt{t}} \frac{|f(y)-f(x)|^{p}}{|B(y, 2^{j+1}\sqrt{t})|} \, dx  \, dy \right)^{1/p} \\
& =: I_1(t) + I_2(t). 
\end{align*}

We need to further estimate $I_{2}(t)$. We will do it in two different cases, namely, $q \leq p$ and $q > p$, as follows. When $q \leq p$, that is, when $0 < q/p \leq 1$, we have 
\begin{align*}
I_{2}(t)^{q} \lesssim \sum_{j=0}^{\infty} e^{-q \, b' 2^{2j}/(2p)} \, 2^{qjQ/p} \left(\int_{\R^{d_1+ d_2}}  \int_{B(y, 2^{j+1}\sqrt{t})} \frac{|f(y)-f(x)|^{p}}{|B(y, 2^{j+1}\sqrt{t})|} \, dx  \, dy \right)^{q/p},  
\end{align*}
whereas when $q > p$, that is, when $1 < q/p < \infty$, we can apply H\"{o}lder's inequality with $r = q/p$ to get 
\begin{align*}
I_{2}(t)^{q} 
&  \lesssim \left(\sum_{j=0}^{\infty} e^{-b' 2^{2j}} 2^{jQ} \int_{\R^{d_1+ d_2}} \int_{B(y, 2^{j+1} \sqrt{t})} \frac{|f(y)-f(x)|^{p}}{|B(y, 2^{j+1} \sqrt{t})|} \, dx  \, dy \right)^{r} \\  
& \lesssim \left(\sum_{j=0}^{\infty} e^{-\frac{b'}{2}2^{2j}r'}\right)^{r/r'} \sum_{j=0}^{\infty} e^{-\frac{b'}{2} 2^{2j}r} 2^{jQr} \left(\int_{\R^{d_1+ d_2}} \int_{B(y, 2^{j+1} \sqrt{t})} \frac{|f(y)-f(x)|^{p}}{|B(y, 2^{j+1} \sqrt{t})|} \, dx  \, dy \right)^{r} \\
& \lesssim  \sum_{j=0}^{\infty} e^{-\frac{b'}{2} 2^{2j}r} 2^{jQr} \left(\int_{\R^{d_1+ d_2}} \int_{B(y, 2^{j+1} \sqrt{t})} \frac{|f(y)-f(x)|^{p}}{|B(y, 2^{j+1} \sqrt{t})|} \, dx  \, dy \right)^{r}. 
\end{align*}
Altogether, for any $1 \leq q < \infty$, we have shown that 
\begin{align*}
I_{2}(t)^{q} \lesssim \sum_{j=0}^{\infty} e^{-q \, b' 
2^{2j}/(2p)} \, 2^{qjQ/p} \left(\int_{\R^{d_1+ d_2}}  \int_{B(y, 2^{j+1}\sqrt{t})} \frac{|f(y)-f(x)|^{p}}{|B(y, 2^{j+1}\sqrt{t})|} \, dx  \, dy \right)^{q/p}.  
\end{align*}

Using the above estimate of $I_{2}(t)^{q}$, we now have 
\begin{align*}
& \mathcal{B}^{p,q}_{\alpha, 0}(f)  \lesssim \left(\int_{0}^{1} t^{-q\alpha/2} \left(I_{1}(t) + I_{2}(t) \right)^{q} \frac{dt}{t} \right)^{1/q} \\
& \lesssim \left(\int_{0}^{1} \left(\int_{\R^{d_1+ d_2}} \int_{B(y, \sqrt{t})} \frac{|f(y)-f(x)|^{p}}{t^{p\alpha/2}|B(y, \sqrt{t})|} \, dx  \, dy \right)^{q/p} \, \frac{dt}{t} \right)^{1/q} \\
& \quad + \sum_{j=0}^{\infty} e^{-b' 2^{2j}/(2p)} \, 2^{jQ/p} \left(\int_{0}^{1} \left(\int_{\R^{d_1+ d_2}} \int_{B(y, 2^{j+1}\sqrt{t})} \frac{|f(y)-f(x)|^{p}}{t^{p\alpha/2} \, |B(y, 2^{j+1}\sqrt{t})|} \, dx  \, dy \right)^{q/p} \, \frac{dt}{t} \right)^{1/q} \\
& \lesssim \left(\int_{0}^{1} \left(\int_{\R^{d_1+ d_2}} \int_{B(y, t)} \frac{|f(y)-f(x)|^{p}}{t^{p\alpha}|B(y, t)|} \, dx  \, dy \right)^{q/p} \, \frac{dt}{t} \right)^{1/q} \\
& \quad + \sum_{j=0}^{\infty} e^{-b' 2^{2j}/(2p)} \, 2^{jQ/p} \, 2^{(j+1)\alpha}\left(\int_{0}^{2^{j+1}} \left(\int_{\R^{d_1+ d_2}} \int_{B(y, t)} \frac{|f(y)-f(x)|^{p}}{t^{p\alpha} \, |B(y, t)|} \, dx  \, dy \right)^{q/p} \, \frac{dt}{t} \right)^{1/q} \\
& \lesssim \left(\int_{0}^{\infty} \left(\int_{\R^{d_1+ d_2}} \int_{B(y, t)} \frac{|f(y)-f(x)|^{p}}{t^{p\alpha}|B(y, t)|} \, dx  \, dy \right)^{q/p} \, \frac{dt}{t} \right)^{1/q} \\
& = N^{G, \alpha, 2}_{p,q}(f), 
\end{align*}
which completes the proof of the theorem.
\end{proof}


\section{Interpolation} 
\label{sec:interpolation}
Our classical and non-classical Besov and Triebel--Lizorkin spaces enjoy the following complex interpolation properties. 

\begin{theorem}
\label{thm:interpolation_Besov_spaces}
Let $p_0, \, p_1 \in [1, \infty)$ and $q_0, \, q_1 \in [1, \infty]$. Let $\alpha_0, \, \alpha_1 \geq 0$ and $\eta_0, \, \eta_1 \in \mathbb{R}$. Given any $\theta \in (0,1)$, let $\alpha_{\theta} = (1-\theta)\alpha_0 + \theta \alpha_1, \, \eta_{\theta} = (1-\theta)\eta_{0} + \theta \eta_1, \, \frac{1}{p_{\theta}} = \frac{1-\theta}{p_0} + \frac{\theta}{p_1}$, and $\frac{1}{q_{\theta}} = \frac{1-\theta}{q_0} + \frac{\theta}{q_1}$. 
\begin{enumerate}[(i)]
\item For classical Besov spaces, we have
$$
(B^{p_0, q_0}_{\alpha_0, \eta_0}(d\mu_{\nu}), \, B^{p_1, q_1}_{\alpha_1, \eta_1}(d\mu_{\nu}))_{[\theta]} = B^{p_\theta, q_\theta}_{\alpha_{\theta}, \eta_{\theta}}(d\mu_{\nu}). 
$$

\medskip 
\item For non-classical Besov spaces, we have
$$
(\tilde{B}^{p_0, q_0}_{\alpha_0, \eta_0}(d\mu_{\nu}), \, \tilde{B}^{p_1, q_1}_{\alpha_1, \eta_1}(d\mu_{\nu}))_{[\theta]} = \tilde{B}^{p_\theta, q_\theta}_{\alpha_\theta, \eta_\theta}(d\mu_{\nu}). 
$$
\end{enumerate}
\end{theorem}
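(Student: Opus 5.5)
The plan is the standard retraction--coretraction method, following \cite[Section 5]{Bruno_Marco_Vallarino_Besov_TL_20}. By the Littlewood--Paley characterisation, Theorem \ref{thm:discrete-characterisation-Besov}, we may fix one integer $m > \max\{\alpha_0,\alpha_1\}/2$ and $t_0 = 1/2$. With this single choice, all three Besov norms (indices $0$, $1$, $\theta$) are given by the discrete expressions. The point of fixing $m$ is that the same operators then work for every space in the interpolation couple.

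We embed each space into a weighted vector-valued sequence space. For case (ii), let
\[
w^{(i)}_0 = V_\nu^{-\alpha_i/Q+\eta_i} \quad\text{and}\quad w^{(i)}_j = V_\nu(2^{-j})^{-\alpha_i/Q}\,V_\nu^{\eta_i} \quad (j\ge 1),
\]
and let $\ell^{q_i}(L^{p_i}(w^{(i)}))$ be the space of sequences $(g_j)_{j\ge0}$ with finite norm $\big(\sum_j \|w^{(i)}_j g_j\|_{L^{p_i}(d\mu_\nu)}^{q_i}\big)^{1/q_i}$. Case (i) is identical, with $w_j = 2^{j\alpha_i/2}V_\nu^{\eta_i}$. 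Define
\[
J f = \bigl(e^{-\frac12 G_\nu}f,\ W^{(m)}_{2^{-1}}f,\ W^{(m)}_{2^{-2}}f,\dots\bigr).
\]
Theorem \ref{thm:discrete-characterisation-Besov} says exactly that $J$ is an isomorphism of $\tilde B^{p_i,q_i}_{\alpha_i,\eta_i}$ onto its image, for $i=0,1$ and for $\theta$.

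Next we build a retraction $R$ with $RJ=\mathrm{Id}$, using a discrete version of Lemma \ref{lem:decomposition_of_a_distibution/function}. Write
\[
f = \sum_{k<m}\tfrac1{k!}W^{(k)}_1 f + \tfrac1{(m-1)!}\sum_{n\ge1}\int_{2^{-n}}^{2^{-n+1}}W^{(m)}_s f\,\tfrac{ds}{s}.
\]
For $s\in[2^{-n},2^{-n+1}]$ factor $W^{(m)}_s f = (s/2^{-n-1})^m\, e^{-(s-2^{-n-1})G_\nu}W^{(m)}_{2^{-n-1}}f$. This gives
\[
f = A\,e^{-\frac12G_\nu}f + \sum_{n\ge1} T_n W^{(m)}_{2^{-n-1}}f,
\]
where $A = \sum_{k<m}\frac1{k!}G_\nu^k e^{-\frac12G_\nu}$ and $T_n=\frac1{(m-1)!}\int_{2^{-n}}^{2^{-n+1}}(2^{n+1}s)^m e^{-(s-2^{-n-1})G_\nu}\frac{ds}{s}$. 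After an index shift, set $R(g_j) = A g_0 + \sum_n T_n g_{n+1}$.

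The boundedness of $R$ from $\ell^{q}(L^{p}(w))$ to $\tilde B^{p,q}_{\alpha,\eta}$, uniformly for both endpoints, is the main obstacle. I would handle it as in the proof of Theorem \ref{thm:vector_field_characterisation_Triebel_Lizorkin_norm}.

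1. Apply $W^{(m)}_{2^{-j}}$ to $T_n g$ and use Lemma \ref{lem:heat_operator_and_derivative}. This gives the pointwise bound $|W^{(m)}_{2^{-j}}T_n g| \lesssim 2^{-m|j-n|}e^{-c2^{-\min(j,n)}G_\nu}|g|$. When $n<j$ the $G_\nu$-powers fall on the long heat operator $T_n$; when $n>j$ one uses $t^m$ with $t\sim2^{-j}$, which requires writing $g$ via $G_\nu^{m}$ only when $g$ itself is of the form $W^{(m)}$. To avoid needing that, I would use the cruder bound $2^{-m(n-j)_-}$ on the side $n>j$, combined with the decay of the ratio $V_\nu(2^{-n})/V_\nu(2^{-j})$ from the weights.
2. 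Commute the weights past the heat operators with Lemma \ref{lemma:ball_volume_and_heat_operator_commutation}, and use $L^p$-boundedness of the heat operators.
3. Sum in $n$ with the Schur-type Lemma \ref{lem:Cor_of_Schur_lemma}(i). Split into $n\le j$ and $n>j$, and use the two ball-volume comparisons $(V_\nu(2^{-n})/V_\nu(2^{-j}))^{\alpha/Q}\lesssim 2^{(j-n)\alpha/2}$ and $\lesssim 2^{(j-n)d\alpha/(2Q)}$, exactly as in that proof.

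Here $m>\alpha_i/2$ is what makes the Schur condition $\gamma<\eta$ hold. The term $Ag_0$, and the low-frequency part $e^{-\frac12G_\nu}Rg$, are controlled by Lemmas \ref{lem:heat_operator_and_derivative} and \ref{lemma:ball_volume_and_heat_operator_commutation}. If the $n>j$ decay turns out to be insufficient, I would instead take $J$ built from $W^{(2m)}_{2^{-j}}$, which is equivalent by Theorem \ref{thm:discrete-characterisation-Besov}. Then $g_j$ can be treated as carrying an extra factor $(2^{-j}G_\nu)^m$ inside $R$, that is, use $\tilde T_n=T_n(2^{-n}G_\nu)^m e^{-2^{-n}G_\nu}$, which restores symmetric decay $2^{-m|j-n|}$.

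Finally, since $J$ and $R$ do not depend on the index $i$, the spaces $\tilde B^{p_i,q_i}_{\alpha_i,\eta_i}$ are retracts of $\ell^{q_i}(L^{p_i}(w^{(i)}))$. By the retraction theorem it then suffices to identify
\[
\bigl(\ell^{q_0}(L^{p_0}(w^{(0)})),\ \ell^{q_1}(L^{p_1}(w^{(1)}))\bigr)_{[\theta]}=\ell^{q_\theta}(L^{p_\theta}(w^{(\theta)})),
\]
with $w^{(\theta)}_j=(w^{(0)}_j)^{1-\theta}(w^{(1)}_j)^\theta$. This is the classical complex interpolation of vector-valued sequence spaces combined with Stein--Weiss weighted $L^p$ interpolation \cite[\S5.5, \S5.6]{Interpolation_spaces_book}, valid because $p_i<\infty$; when $q_i=\infty$ one uses the closed-subspace variant. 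The weights multiply correctly because
\[
(V_\nu(2^{-j})^{-\alpha_0/Q}V_\nu^{\eta_0})^{1-\theta}(V_\nu(2^{-j})^{-\alpha_1/Q}V_\nu^{\eta_1})^\theta = V_\nu(2^{-j})^{-\alpha_\theta/Q}V_\nu^{\eta_\theta},
\]
and similarly $(2^{j\alpha_0/2})^{1-\theta}(2^{j\alpha_1/2})^{\theta}=2^{j\alpha_\theta/2}$ in the classical case. This yields both (i) and (ii).
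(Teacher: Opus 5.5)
Your overall route is the paper's. The paper proves only the Triebel--Lizorkin analogue, Theorem \ref{thm:interpolation_Triebel_Lizorkin_spaces}, and says the Besov case is the same. The method is to realise each space as a retract of a weighted $\ell^{q}(L^{p})$ space through a discretised Calder\'on formula, then interpolate the sequence spaces using Stein--Weiss together with vector-valued sequence interpolation.

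Your choice to put the whole factor $V_\nu(2^{-j})^{-\alpha_i/Q}V_\nu^{\eta_i}$ into the sequence-space weight is cleaner than the paper's. There, $\mathcal J$ and $\mathcal P$ carry $2^{\mp j\alpha/2}V_\nu(2^{-j})^{\mp\alpha/Q}$, so they are not literally common to both endpoint spaces when $\alpha_0\neq\alpha_1$. Your $J$ and $R$ are common to both, which is what the retraction theorem requires.

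\textbf{The gap is in your main retraction.}
\begin{itemize}
\item Your $T_n$ come from the order-$m$ formula and contain no power of $G_\nu$. Consequently $W^{(m)}_{2^{-j}}T_n$ decays only for $n<j$, and $e^{-\frac12 G_\nu}T_n$ does not decay in $n$ at all.
\item For $n>j$, and for the low-frequency part of $Rg$, you rely entirely on the weight ratios $2^{-(n-j)d\alpha/(2Q)}$ and $V_\nu^{-\alpha/Q}\lesssim 2^{-nd\alpha/(2Q)}V_\nu(2^{-n})^{-\alpha/Q}$.
\item These ratios give nothing when $\alpha_i=0$, which the statement allows, and then $R$ is unbounded for $q_i>1$. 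Take $g_0=0$ and $g_{n+1}=h\ge 0$ for $1\le n\le N$. Each $T_n$ is positivity preserving, and by the heat kernel bounds $e^{-\frac12 G_\nu}T_nh\gtrsim e^{-cG_\nu}h$ uniformly in $n$. Hence $\|V_\nu^{\eta_i}e^{-\frac12 G_\nu}Rg\|_{L^{p_i}}\gtrsim N$, while $\|g\|\sim N^{1/q_i}$.
\item Relatedly, in Lemma \ref{lem:Cor_of_Schur_lemma}(i) with $\gamma=m-\alpha/2$ and $\eta=m$, the condition $\gamma<\eta$ is $\alpha>0$. The assumption $m>\alpha/2$ only gives $\gamma>0$.
\end{itemize}

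\textbf{Your fallback has the right idea but the wrong formula.} With $J$ built from $W^{(2m)}_{2^{-j}}$ and $\tilde T_n=T_n(2^{-n}G_\nu)^m e^{-2^{-n}G_\nu}$, the sum $\sum_n\tilde T_nW^{(2m)}_{2^{-n-1}}f$ does not reproduce $f$. So $RJ\neq I$. The extra powers of $G_\nu$ must come from raising the order of the reproducing formula relative to $J$.

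This is what the paper does. Keep your $J$ (order $m$), apply Lemma \ref{lem:decomposition_of_a_distibution/function} with $2m$ in place of $m$, and use the factorisation $W^{(2m)}_sf=(2^{n+1}s)^m(sG_\nu)^m e^{-(s-2^{-n-1})G_\nu}W^{(m)}_{2^{-n-1}}f$. This gives
\[
Rg=\sum_{k=0}^{2m-1}\frac{1}{k!}G_\nu^{k} e^{-\frac12 G_\nu}g_0+\frac{1}{(2m-1)!}\sum_{n\ge1}\int_{2^{-n}}^{2^{-n+1}}(2^{n+1}s)^m(sG_\nu)^m e^{-(s-2^{-n-1})G_\nu}g_{n+1}\,\frac{ds}{s}.
\]
Then $RJ=I$. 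The $n$-th block gives $2^{-m|j-n|}$ decay under $W^{(m)}_{2^{-j}}$ and $2^{-nm}$ decay under $e^{-\frac12 G_\nu}$. With this, your steps 2--3 go through for all $\alpha_i\ge 0$, needing only $m>\max\{\alpha_0,\alpha_1\}/2$.

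\textbf{The case $q_0=q_1=\infty$ cannot be covered.} There the complex method yields only the closure of the intersection, which is in general a proper subspace, as for the Euclidean $B^{s}_{p,\infty}$. No ``closed-subspace variant'' gives the stated equality in that case. It should be excluded; the paper does not address it either.
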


\begin{theorem}
\label{thm:interpolation_Triebel_Lizorkin_spaces}
Let $p_0, \, p_1 \in (1, \infty)$ and $q_0, \, q_1 \in [1, \infty]$. Let $\alpha_0, \, \alpha_1 \geq 0$ and $\eta_0, \, \eta_1 \in \mathbb{R}$. Given any $\theta \in (0,1)$, let $\alpha_{\theta} = (1-\theta)\alpha_0 + \theta \alpha_1, \, \eta_{\theta} = (1-\theta)\eta_{0} + \theta \eta_1, \, \frac{1}{p_{\theta}} = \frac{1-\theta}{p_0} + \frac{\theta}{p_1}$, and $\frac{1}{q_{\theta}} = \frac{1-\theta}{q_0} + \frac{\theta}{q_1}$. 
\begin{enumerate}[(i)]
\item For classical Triebel--Lizorkin spaces, we have
$$
(F^{p_0, q_0}_{\alpha_0, \eta_0}(d\mu_{\nu}), \, F^{p_1, q_1}_{\alpha_1, \eta_1}(d\mu_{\nu}))_{[\theta]} = F^{p_\theta, q_\theta}_{\alpha_\theta, \eta_\theta}(d\mu_{\nu}). 
$$

\medskip 
\item For non-classical Triebel--Lizorkin spaces, we have 
$$
(\tilde{F}^{p_0, q_0}_{\alpha_0, \eta_0}(d\mu_{\nu}), \, \tilde{F}^{p_1, q_1}_{\alpha_1, \eta_1}(d\mu_{\nu}))_{[\theta]} = \tilde{F}^{p_\theta, q_\theta}_{\alpha_\theta, \eta_\theta}(d\mu_{\nu}).
$$
\end{enumerate}
\end{theorem}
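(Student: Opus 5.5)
The plan is the standard retraction--coretraction method, as in \cite{Bruno_Marco_Vallarino_Besov_TL_20}. I will realise each $\tilde{F}^{p,q}_{\alpha,\eta}(d\mu_{\nu})$ (and each $F^{p,q}_{\alpha,\eta}(d\mu_{\nu})$) as a retract of a weighted mixed-norm Lebesgue space, and then quote the known complex interpolation of those spaces. I only describe the non-classical case, since the classical case is the same with $t^{-\alpha/2}V_\nu^{\eta}$ in place of $V_\nu(t)^{-\alpha/Q}V_\nu^{\eta}$.

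\textbf{Choice of parameter and target space.} Fix an integer $m > \max\{\alpha_0,\alpha_1\}/2$. By Theorem \ref{thm:independence_of_parameters_Triebel_Lizorkin} (with some $t_0\in(0,1)$, so that $\alpha=0$ is also allowed), every space in the statement may be normed with this common $m$ and common $t_0$. Let $w_{\alpha,\eta}(t,x) = V_\nu(x,t)^{-\alpha/Q}V_\nu(x,1)^{\eta}$. Take the target space
\[
\mathcal{X}^{p,q}_{\alpha,\eta} = L^p\bigl(d\mu_\nu; L^q((0,1), w_{\alpha,\eta}^q\,\tfrac{dt}{t})\bigr) \oplus L^p(V_\nu^{(-\alpha/Q+\eta)p}\,d\mu_\nu),
\]
and the coretraction $J f = \bigl(W_t^{(m)} f,\ e^{-t_0 G_\nu} f\bigr)$. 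Then $\|Jf\|_{\mathcal{X}} \sim \|f\|_{\tilde F^{p,q}_{\alpha,\eta}}$ by the independence of parameters.

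\textbf{Construction of the retraction.} Using Lemma \ref{lem:decomposition_of_a_distibution/function} with $2m$, I will write
\[
f = c\int_0^1 W^{(m)}_{t/2}W^{(m)}_{t/2} f\,\tfrac{dt}{t} + \sum_{k<2m} c_k\, W^{(k)}_1 f.
\]
This suggests defining, for $(F,g)\in\mathcal{X}$,
\[
R(F,g) = c\int_0^1 W^{(m)}_{t/2}F(t,\cdot)\,\tfrac{dt}{t} + \sum_k c_k\,(G_\nu)^k e^{-(1-t_0)G_\nu} g .
\]
The finite sum is rewritten so that $g = e^{-t_0G_\nu}f$ recovers it. The constants are adjusted, using $W_t^{(m)}e^{-s G_\nu}$ with the appropriate scaling, so that $RJ = \mathrm{Id}$ on $\mathcal{S}'$. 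The element $R(F,g)$ is a distribution by duality against $\mathcal{S}$.

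\textbf{Boundedness of $R$: the main obstacle.} The crux is to show $\|R(F,g)\|_{\tilde F^{p,q}_{\alpha,\eta}} \lesssim \|(F,g)\|_{\mathcal X}$ uniformly in the parameters. The plan is to compute $W^{(m)}_u R(F,g)$. For the integral part I will use the bound $|W^{(m)}_u W^{(m)}_{t/2}F(t)| \lesssim \frac{u^m t^m}{(u+t)^{2m}}\, e^{-cb(u+t)G_\nu}|F(t)|$, which follows from Lemma \ref{lem:heat_operator_and_derivative}. Next, Lemma \ref{lemma:ball_volume_and_heat_operator_commutation} moves the weight $w_{\alpha,\eta}$ inside, and the ratio $(V_\nu(t)/V_\nu(u))^{\alpha/Q}$ is controlled by the local doubling bounds. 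I will use the two regimes $t\le u$ and $t>u$ exactly as in the proof of Theorem \ref{thm:vector_field_characterisation_Triebel_Lizorkin_norm}: the factor is $(t/u)^{\alpha/2}$ in one regime and $(t/u)^{d\alpha/(2Q)}$ in the other. The heat operator is then removed by Proposition \ref{prop:semigroup_continuous_version_of_Fefferman_Stein_theorem}, which needs $p>1$, or for the Besov case simply by $L^p$-boundedness. Finally, the Schur-type Lemma \ref{lem:Cor_of_Schur_lemma}(ii) handles the $t$-integral, and it works because $m>\alpha/2$ gives the needed decay. The $g$-terms and the low-frequency norm $\|V_\nu^{-\alpha/Q+\eta}e^{-t_0G_\nu}R(F,g)\|_{L^p}$ are handled by Lemma \ref{lemma:ball_volume_and_heat_operator_commutation} together with uniform $L^p$-bounds, as in Step 3 of Theorem \ref{thm:independence_of_parameters_Triebel_Lizorkin}. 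The delicate point is making all constants independent of $F$ while handling the non-commutation of the weights. I expect this to work because every heat operator appearing has time $\lesssim 1$.

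\textbf{Conclusion.} The weighted mixed-norm spaces $\mathcal X$ interpolate as
\[
(\mathcal X^{p_0,q_0}_{\alpha_0,\eta_0},\mathcal X^{p_1,q_1}_{\alpha_1,\eta_1})_{[\theta]} = \mathcal X^{p_\theta,q_\theta}_{\alpha_\theta,\eta_\theta}.
\]
This follows from the Stein--Weiss/Triebel theorem for $L^p(L^q)$ with change of weights: the weights $w_{\alpha_0,\eta_0}^{1-\theta}w_{\alpha_1,\eta_1}^{\theta} = w_{\alpha_\theta,\eta_\theta}$ combine exactly, and the restriction $p_i<\infty$ is needed here. Since $R$ and $J$ are the same operators for all parameters, the abstract retract theorem \cite[\S 1.2.4]{Interpolation_spaces_book}-type result transfers this identity to the function spaces, giving both Theorem \ref{thm:interpolation_Triebel_Lizorkin_spaces} and, with $\ell^q(L^p)$ replacing $L^p(L^q)$, Theorem \ref{thm:interpolation_Besov_spaces}.
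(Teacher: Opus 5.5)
Your proposal is correct in substance and uses the same retraction method as the paper. The ingredients match: the Calder\'{o}n formula of Lemma~\ref{lem:decomposition_of_a_distibution/function} with $2m$; boundedness of the retraction via Lemmas~\ref{lem:heat_operator_and_derivative} and \ref{lemma:ball_volume_and_heat_operator_commutation}, Proposition~\ref{prop:semigroup_continuous_version_of_Fefferman_Stein_theorem} and the Schur-type Lemma~\ref{lem:Cor_of_Schur_lemma}; then interpolation of a weighted vector-valued $L^p$ space. Your estimate of $W^{(m)}_u R(F,g)$ is the continuous counterpart of the paper's treatment of $I_3$ and $I_4$.

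The real difference is where the weights sit. The paper retracts onto the dyadic space $L^{p}(l^{q}_{\alpha}, V_{\nu}^{p\eta}\,d\mu_{\nu})$. Since $V_\nu(x,2^{-j})$ does not split into a function of $j$ times a function of $x$, it builds the factors $2^{-j\alpha/2}V_\nu(2^{-j})^{-\alpha/Q}$ and $V_\nu^{-\alpha/Q}$ into $\mathcal{J}$, and their reciprocals into $\mathcal{P}$. Its maps therefore change with $\alpha$, whereas the retract theorem needs one pair of maps acting on both endpoint spaces. In the non-classical case the paper's argument thus applies verbatim only when $\alpha_0=\alpha_1$.

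You put the whole weight $w_{\alpha,\eta}(t,x)$ into the target, so $J$ and $R$ are independent of $(p,q,\alpha,\eta)$, which is exactly what is needed; the same reweighting also works dyadically. The price is that the target carries a weight depending jointly on $(t,x)$. So $(\mathcal{X}_0,\mathcal{X}_1)_{[\theta]}=\mathcal{X}_\theta$ is not literally the standard $L^p(l^q_s)$ or Stein--Weiss statement. Justify it by Calder\'{o}n's product formula for Banach function lattices (the complex space is the closure of $X_0\cap X_1$ in $X_0^{1-\theta}X_1^{\theta}$), together with your identity $w_{\alpha_0,\eta_0}^{1-\theta}w_{\alpha_1,\eta_1}^{\theta}=w_{\alpha_\theta,\eta_\theta}$.

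Two smaller points.

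First, as written $RJ\neq\mathrm{Id}$. Since $W^{(m)}_{t/2}W^{(m)}_{t}=2^{-m}(2/3)^{2m}W^{(2m)}_{3t/2}$, the integral part of $RJf$ is a multiple of $\int_0^{3/2}W^{(2m)}_s f\,\frac{ds}{s}$ rather than $\int_0^{1}$. There are two fixes:
\begin{enumerate}
\item Take $W^{(m)}_{t/2}f$ as the first component of $J$. Then $W^{(m)}_{t/2}W^{(m)}_{t/2}=2^{-2m}W^{(2m)}_{t}$, so $RJ=\mathrm{Id}$ exactly. $J$ is still bounded, and $\|f\|\lesssim\|Jf\|$ follows from $RJ=\mathrm{Id}$.
\item Alternatively, subtract the $g$-expressible piece $\frac{1}{(2m-1)!}\int_1^{3/2}s^{2m}G_\nu^{2m}e^{-(s-t_0)G_\nu}g\,\frac{ds}{s}$ in the second part of $R$.
\end{enumerate}

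Second, there is a defect you share with the statement and with the paper. If $q_0=q_1=\infty$ and, say, $\alpha_0<\alpha_1$, then $(\mathcal{X}_0,\mathcal{X}_1)_{[\theta]}\neq\mathcal{X}_\theta$. Every $H\in\mathcal{X}_0\cap\mathcal{X}_1$ satisfies $w_{\alpha_\theta,\eta_\theta}|H|\to0$ as $t\to0$ for a.e.\ $x$, because $w_{\alpha_\theta,\eta_\theta}/w_{\alpha_1,\eta_1}=V_\nu(t)^{(\alpha_1-\alpha_\theta)/Q}V_\nu^{\eta_\theta-\eta_1}\to0$. Hence $F=(w_{\alpha_\theta,\eta_\theta}^{-1}\chi_B,0)$, with $0<\mu_\nu(B)<\infty$, stays at $\mathcal{X}_\theta$-distance at least $\mu_\nu(B)^{1/p_\theta}$ from $\mathcal{X}_0\cap\mathcal{X}_1$. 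But $\mathcal{X}_0\cap\mathcal{X}_1$ is dense in any complex interpolation space. As in the Euclidean theory, this case must be excluded or the right-hand side replaced by a closure.
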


As in the previous section, we shall only prove Theorem \ref{thm:interpolation_Triebel_Lizorkin_spaces}, as Theorem \ref{thm:interpolation_Besov_spaces} can be proved in a similar manner by using \cite[Theorems 5.5.3, 5.6.3] {Interpolation_spaces_book}. Unlike \cite[Theorem 6.1]{Bruno_Marco_Vallarino_Besov_TL_20}, in our interpolation Theorem \ref{thm:interpolation_Besov_spaces} for Besov spaces we cannot take $p_0$ or $p_1$ to be $\infty$. This is because we do not know how to interpolate weighted $L^\infty$-spaces while using \cite[Theorems 5.5.3, 5.6.3] {Interpolation_spaces_book}.  

\begin{proof}[Proof of Theorem \ref{thm:interpolation_Triebel_Lizorkin_spaces}] 
Once again, we shall only write the details for non-classical spaces. The idea is to show that Triebel--Lizorkin spaces $\tilde{F}^{p, q}_{\alpha, \eta}(d\mu_{\nu})$ are retracts of some well-known spaces, where interpolation is known to be true. Recall that a space $Y$ is called a retract of a space $X$ if there exist two bounded linear maps $\mathcal{P}: X \to Y$ and $\mathcal{J}: Y \to X$ such that $\mathcal{P} \circ \mathcal{J}$ is an identity map on $Y$. 

\medskip 
Given a positive measurable function $w$, consider the following space:
$$
L^{p}(l^{q}_{\alpha}, w \, d\mu_{\nu}) := \left\{ u = (u_j)_{j \geq 0} : u_j \in L^{p}(w \, d\mu_{\nu}) \, \, \text{and} \, \, \| u \|_{L^{p}(l^{q}_{\alpha}, \, w \, d\mu_{\nu})} < \infty \right\}, 
$$
where 
$$
\| u \|_{L^{p}(l^{q}_{\alpha}, \, w \, d\mu_{\nu})} := \left\|\left(\sum_{j=0}^{\infty}(2^{j\alpha/2} |u_{j}|)^{q}\right)^{1/q}\right\|_{L^{p}(w\, d\mu_{\nu})}. 
$$

In our case, we shall work with weight functions $V_{\nu}^{p\eta}$, and show that $\tilde{F}^{p, \, q}_{\alpha, \eta}(d\mu_{\nu})$ is a retract of $L^{p}(l^{q}_{\alpha}, V_{\nu}^{p\eta} \, d\mu_{\nu})$. Once we have that, our interpolation result follows from \cite[Theorem 6.4.2]{Interpolation_spaces_book} and   \cite[p. 130]{Interpolation_theory_Triebel}. 

\medskip 
Now, consider the map $\mathcal{J}$ defined on $ \tilde{F}^{p, \, q}_{\alpha, \, \eta}(d\mu_{\nu})$ by 
$$ 
\mathcal{J}f := ((\mathcal{J}f)_{j})_{j \in \mathbb{N}},$$ 
where 
$$
(\mathcal{J}f)_{j}(x) = 
\left\{ 
\begin{array}{ll}
V_{\nu}(x, 1)^{-\alpha/Q} \, e^{-\frac{1}{2}G_{\nu}} f(x), & \text{if  } j = 0, \\ 
\\ 
2^{m-j\frac{\alpha}{2}} V_{\nu}(x, 2^{-j})^{-\alpha/Q} \, W_{2^{-j-1}}^{(m)}f(x), & \text{if  } j \geq 1, 
\end{array} 
\right.
$$
and the map $\mathcal{P}$ defined on $L^{p}(l^{q}_{\alpha}, V_{\nu}^{p\eta} \, d\mu_{\nu}) $ by 
\begin{align*}
\mathcal{P}u & := \sum_{k=0}^{2m-1} \frac{1}{k!} G_{\nu}^{k} e^{-\frac{1}{2}G_{\nu}}(V_{\nu}^{\alpha/Q} u_{0}) \\
& \quad + \frac{1}{(2m-1)!}\sum_{j=1}^{\infty} 2^{jm+j\frac{\alpha}{2}} \int_{2^{-j}}^{2^{-j+1}} t^{2m} \, G_{\nu}^{m} \, e^{-(t-2^{-j-1})G_{\nu}}(V_{\nu}(2^{-j})^{\alpha/Q} \, u_{j}) \, \frac{dt}{t} \\
& =: \mathcal{P}_1u + \mathcal{P}_2u.
\end{align*}

It is quite straightforward to verify that $\mathcal{J}$ maps $\tilde{F}^{p, \, q}_{\alpha, \, \eta}(d\mu_{\nu})$ boundedly into $L^{p}(l^{q}_{\alpha}, V_{\nu}^{p\eta} \, d\mu_{\nu})$ and that $\mathcal{P}\circ \mathcal{J}$ is an identity map on $\tilde{F}^{p,\, q}_{\alpha, \eta}(d\mu_{\nu})$, so we omit the details. Hence, we shall be done if we could just show that $\mathcal{P}$ maps $L^{p}(l^{q}_{\alpha}, V_{\nu}^{p\eta} \, d\mu_{\nu})$ boundedly into $\tilde{F}^{p, \, q}_{\alpha, \, \eta}(d\mu_{\nu})$ and we prove the same here. Note that 
\begin{align*}
\|\mathcal{P}u\|_{\tilde{F}^{p, \, q}_{\alpha, \eta}(d\mu_{\nu})} 
& \lesssim \|V_{\nu}^{-\alpha/Q+\eta} \, e^{-\frac{1}{2}G_{\nu}} (\mathcal{P}_1u)\|_{L^p(d\mu_{\nu})} \\ 
& \quad + \|V_{\nu}^{-\alpha/Q+\eta} \, e^{-\frac{1}{2}G_{\nu}} (\mathcal{P}_2u)\|_{L^p(d\mu_{\nu})} \\
& \quad + \left\|\left(\sum_{j=1}^{\infty} (V_{\nu}(2^{-j})^{-\alpha/Q} \, V_{\nu}^{\eta}  |W_{2^{-j}}^{(m)}(\mathcal{P}_1u)|)^q\right)^{1/q}\right\|_{L^{p}(d\mu_{\nu})} \\
& \quad + \left\|\left(\sum_{j=1}^{\infty} (V_{\nu}(2^{-j})^{-\alpha/Q} \, V_{\nu}^{\eta}  |W_{2^{-j}}^{(m)}(\mathcal{P}_2u)|)^q\right)^{1/q}\right\|_{L^{p}(d\mu_{\nu})} \\
& =: I_1 + I_2+ I_3 + I_4, 
\end{align*}
and we will estimate each of the four $I_j$, $j = 1, \ldots, 4$, one by one. Let us begin with $I_1$. Using Lemmas \ref{lem:heat_operator_and_derivative} and \ref{lemma:ball_volume_and_heat_operator_commutation} along with the boundedness of the heat operator, we get
\begin{align*}
I_1 & \lesssim \sum_{k=0}^{2m-1} \frac{1}{k!} \|V_{\nu}^{-\alpha/Q+\eta} |G_{\nu}^{k} e^{-G_{\nu}}(V_{\nu}^{\alpha/Q} u_0)|\|_{L^{p}(d\mu_{\nu})} \\
& \lesssim \|V_{\nu}^{-\alpha/Q+\eta} e^{-bG_{\nu}}|V_{\nu}^{\alpha/Q} u_0|\|_{L^{p}(d\mu_{\nu})} \\
& \lesssim \|V_{\nu}^{\eta} \, u_0\|_{L^{p}(d\mu_{\nu})} = \|u_0\|_{L^{p}(V_{\nu}^{p\eta} \, d\mu_{\nu})} \leq \|u\|_{L^{p}(l^{q}_{\alpha}, \, V_{\nu}^{p\eta} \, d\mu_{\nu})}.
\end{align*}
Next, we consider $I_2$.
For the same, for each $j \in \mathbb{N}$ and $t \in [2^{-j}, 2^{-j+1}]$, if we write 
$$
F_{j, \,t} = t^{2m} \, G_{\nu}^{m} \, e^{-(t-2^{-j-1})G_{\nu}}(V_{\nu}(2^{-j})^{\alpha/Q} \, u_j), 
$$
then, by using Lemmas \ref{lem:heat_operator_and_derivative} and \ref{lemma:ball_volume_and_heat_operator_commutation}, we have
\begin{align*}
|F_{j, \, t}| & \lesssim t^{2m} (t-2^{-j-1})^{-m} e^{-3b^{2} 2^{-j-1}G_{\nu}} |V_{\nu}(2^{-j})^{\alpha/Q} \, u_j| \\
& \lesssim 2^{-jm} V_{\nu}(2^{-j})^{\alpha/Q}  \, e^{-3b^3 2^{-j}G_{\nu}} |u_j|, 
\end{align*}
and therefore 
\begin{align*}
I_2 & \lesssim \left\| \sum_{j=1}^{\infty} 2^{jm+j\frac{\alpha}{2}} V_{\nu}^{-\alpha/Q+\eta} \, e^{-\frac{1}{2}G_{\nu}} \int_{2^{-j}}^{2^{-j+1}} |F_{j, \, t}(\cdot)| \, \frac{dt}{t}\right\|_{L^{p}(d\mu_{\nu})} \\
& \lesssim \left\|\sum_{j=1}^{\infty} 2^{j\frac{\alpha}{2}} V_{\nu}^{-\alpha/Q+\eta} \, e^{-\frac{1}{2}G_{\nu}} \int_{2^{-j}}^{2^{-j+1}}   V_{\nu}(2^{-j})^{\alpha/Q} \, e^{-3 b^{3} 2^{-j}G_{\nu}} |u_j| \, \frac{dt}{t}\right\|_{L^{p}(d\mu_{\nu})} \\
& \lesssim \left\|\sum_{j=1}^{\infty} 2^{j\frac{\alpha}{2}} \, 2^{-j\frac{\alpha d}{2Q}} \, V_{\nu}^{\eta} \,  e^{-bG_{\nu}} \, e^{-3b^3 2^{-j}G_{\nu}} |u_j| \right\|_{L^{p}(d\mu_{\nu})} \\
& \lesssim \left\|\sum_{j=1}^{\infty} e^{-6b^4 2^{-j}G_{\nu}} (2^{j\frac{\alpha}{2}} \, 2^{-j\frac{\alpha d}{2Q}}  \, |V_{\nu}^{\eta} u_j|)\right\|_{L^{p}(d\mu_{\nu})} \\
& \lesssim \left\|\sum_{j=1}^{\infty} 2^{j\frac{\alpha}{2}} \, 2^{-j\frac{\alpha d}{2Q}} \, | \, V_{\nu}^{\eta} u_j| \right\|_{L^{p}(d\mu_{\nu})} \\
& \lesssim \left\|\left(\sum_{j=1}^{\infty} (2^{j\frac{\alpha}{2}} \, |\, V_{\nu}^{\eta} u_j|)^{q} \right)^{1/q} \right\|_{L^{p}(d\mu_{\nu})} \\
& \leq \|u\|_{L^{p}(l^{q}_{\alpha}, V_{\nu}^{p\eta} \, d\mu_{\nu})},
\end{align*}
where we used Lemma \ref{lemma:ball_volume_and_heat_operator_commutation} and Proposition \ref{prop:semigroup_version_of_Fefferman_Stein_theorem} in the intermediate steps.

\medskip Next, we estimate $I_3$. Using Lemma \ref{lem:heat_operator_and_derivative}
(with $\kappa' = b, \, \kappa = 3b$ and $t_0 = \frac{1}{2}$) and Lemma \ref{lemma:ball_volume_and_heat_operator_commutation}, we have for $t \in (0,1)$, 
\begin{align*}
|W_{t}^{(m)} \, G_{\nu}^{k} \, e^{-\frac{1}{2}G_{\nu}}(V_{\nu}^{\alpha/Q} \,  u_0)| & = |t^{m} \, G_{\nu}^{m+k} \, e^{-\left(t+\frac{1}{2}\right)G_{\nu}}(V_{\nu}^{\alpha/Q} \,  u_0)| \\
& \lesssim t^{m} V_{\nu}^{\alpha/Q} \, e^{-3b^3G_{\nu}} |u_0|, 
\end{align*}
using which along with Theorem \ref{thm:discrete_characterisation-Triebel--Lizorkin}, we get 
\begin{align*}
I_3 
& \sim \left\|\left(\int_{0}^{1} (V_{\nu}(t)^{-\alpha/Q} \, V_{\nu}^{\eta} \, |W_{t}^{(m)}(\mathcal{P}_1u)|)^q \, \frac{dt}{t} \right)^{1/q}\right\|_{L^{p}(d\mu_{\nu})} \\
& \lesssim \left\|\left(\int_{0}^{1} (t^{m-\alpha/2} \, V_{\nu}^{\eta} \, e^{-3b^3 G_{\nu}} |u_0|)^q \, \frac{dt}{t} \right)^{1/q}\right\|_{L^{p}(d\mu_{\nu})} \\
& \lesssim \|e^{-6b^4 G_{\nu}} |\, V_{\nu}^{\eta} u_0|\|_{L^{p}(d\mu_{\nu})}  \\
& \lesssim \| \, V_{\nu}^{\eta} u_0\|_{L^{p}(d\mu_{\nu})} \\
& \lesssim \|u\|_{L^{p}(l^{q}_{\alpha}, V_{\nu}^{p\eta} \, d\mu_{\nu})}. 
\end{align*}

Finally, we estimate $I_4$. For the same, note that by using Lemma \ref{lem:heat_operator_and_derivative} (with $\kappa' = b, \, \kappa = 3b$ and $t_0 = \frac{1}{2}(2^{-j}+2^{-k})$) and Lemma \ref{lemma:ball_volume_and_heat_operator_commutation}, we have
\begin{align*}
\left| W_{2^{-k}}^{(m)} \left( \mathcal{P}_2 u \right) \right| & \lesssim \left|W_{2^{-k}}^{(m)}\left(\sum_{j=1}^{\infty} 2^{jm+j\frac{\alpha}{2}} \int_{2^{-j}}^{2^{-j+1}} t^{2m} \, G_{\nu}^{m} \, e^{-(t-2^{-j-1})G_{\nu}}(V_{\nu}(2^{-j})^{\alpha/Q} \,  u_{j}) \, \frac{dt}{t}\right)\right| \\ 
& \lesssim 2^{-km} \sum_{j=1}^{\infty} 2^{jm+j\frac{\alpha}{2}} \int_{2^{-j}}^{2^{-j+1}} t^{2m} \, |G_{\nu}^{2m} \, e^{-(t-2^{-j-1}+2^{-k})G_{\nu}}(V_{\nu}(2^{-j})^{\alpha/Q} \, u_{j})| \, \frac{dt}{t} \\
& \lesssim 2^{-km} \sum_{j=1}^{\infty} 2^{jm+j\frac{\alpha}{2}} \int_{2^{-j}}^{2^{-j+1}} \frac{t^{2m}}{(2^{-j}+2^{-k})^{2m}} \, e^{-\frac{3}{2}b^2(2^{-j}+2^{-k})G_{\nu}}|V_{\nu}(2^{-j})^{\alpha/Q} \,  u_{j}| \, \frac{dt}{t} \\
& \sim 2^{-km} \sum_{j=1}^{\infty} 2^{-jm+j\frac{\alpha}{2}} \, 2^{2m \min\{j,k\}} \, e^{-\frac{3}{2}b^2(2^{-j}+2^{-k})G_{\nu}}|V_{\nu}(2^{-j})^{\alpha/Q} \, u_{j}| \\
& \lesssim 2^{-km} \sum_{j=1}^{\infty} 2^{-jm+j\frac{\alpha}{2}} \, 2^{2m \min\{j,k\}} \, e^{-\frac{3}{2}b^22^{-k}G_{\nu}} \, V_{\nu}(2^{-j})^{\alpha/Q} \,
e^{-3b^2 2^{-j}G_{\nu}}|u_{j}|,
\end{align*}
which, along with Lemma \ref{lemma:ball_volume_and_heat_operator_commutation} implies that 
\begin{align*}
& V_{\nu}(2^{-k})^{-\alpha/Q} \, V_{\nu}^{\eta}  |W_{2^{-k}}^{(m)}(\mathcal{P}_{2}u)| \\ 
& \lesssim e^{-6b^4 2^{-k} G_{\nu}} \, \left(2^{-km} \sum_{j=1}^{\infty} 2^{-jm+j\frac{\alpha}{2}} \, 2^{2m \min\{j,k\}} \, \frac{V_{\nu}(2^{-j})^{\alpha/Q}}{V_{\nu}(2^{-k})^{\alpha/Q}} \,
e^{-6b^4 2^{-j}G_{\nu}}|V_{\nu}^{\eta} \, u_{j}| \right), 
\end{align*}
and therefore it follows with the help of Proposition \ref{prop:semigroup_version_of_Fefferman_Stein_theorem} and Lemma \ref{lem:Cor_of_Schur_lemma} that 
\begin{align*}
I_4 
& \lesssim \left\|\left(\sum_{k=1}^{\infty}  \left(2^{-km} \sum_{j=1}^{\infty} 2^{-jm+j\frac{\alpha}{2}} \, 2^{2m \min\{j,k\}} \, \frac{V_{\nu}(2^{-j})^{\alpha/Q}}{V_{\nu}(2^{-k})^{\alpha/Q}} \,
e^{-6b^4 2^{-j}G_{\nu}}|V_{\nu}^{\eta} \, u_{j}|\right)^{q}\right)^{1/q}\right\|_{L^{p}(d\mu_{\nu})} \\
& \lesssim  \left\|\left(\sum_{k=1}^{\infty}  \left(2^{-km} \sum_{j=1}^{k} 2^{-jm+j\frac{\alpha}{2}} \, 2^{2m \min\{j,k\}} \, 2^{(k-j)\alpha/2} \,
e^{-6b^4 2^{-j}G_{\nu}}|V_{\nu}^{\eta} u_{j}|\right)^{q}\right)^{1/q}\right\|_{L^{p}(d\mu_{\nu})} \\
& \quad +  \left\|\left(\sum_{k=1}^{\infty}  \left(2^{-km} \sum_{j=k+1}^{\infty} 2^{-jm+j\frac{\alpha}{2}} \, 2^{2m \min\{j,k\}} \, 2^{(k-j)d\alpha/2Q} \,
e^{-6b^4 2^{-j}G_{\nu}}|V_{\nu}^{\eta} u_{j}|\right)^{q}\right)^{1/q}\right\|_{L^{p}(d\mu_{\nu})} \\
& \lesssim  \left\|\left(\sum_{k=1}^{\infty}  \left(2^{-km + k\frac{\alpha}{2}} \sum_{j=1}^{k} 2^{-jm} \, 2^{2m \min\{j,k\}}  \,
e^{-6b^4 2^{-j}G_{\nu}}|V_{\nu}^{\eta} u_{j}|\right)^{q}\right)^{1/q}\right\|_{L^{p}(d\mu_{\nu})} \\
& \quad +  \left\|\left(\sum_{k=1}^{\infty}  \left(2^{-km+k\frac{d\alpha}{2Q}} \sum_{j=k+1}^{\infty} 2^{-jm+j\frac{\alpha}{2}-j\frac{d\alpha}{2Q}} \, 2^{2m \min\{j,k\}} \, e^{-6b^4 2^{-j} G_{\nu}}|V_{\nu}^{\eta} u_{j}| \right)^{q}\right)^{1/q}\right\|_{L^{p}(d\mu_{\nu})} \\
& \lesssim  \left\|\left(\sum_{j=1}^{\infty}  \left( 2^{j\alpha/2}  \,
e^{-6b^4 2^{-j}G_{\nu}}|V_{\nu}^{\eta} u_{j}|\right)^{q}\right)^{1/q}\right\|_{L^{p}(d\mu_{\nu})} \\
& \lesssim  \left\|\left(\sum_{j=1}^{\infty}  \left( 2^{j\alpha/2} \,
|V_{\nu}^{\eta} \, u_{j}|\right)^{q}\right)^{1/q}\right\|_{L^{p}(d\mu_{\nu})} \\
& \leq \|u\|_{L^{p}(l^{q}_{\alpha}, \, V_{\nu}^{p\eta} \, d\mu_{\nu})}.
\end{align*}

Combining the above estimates of $I_1$, $I_2$, $I_3$ and $I_4$, we get that $\mathcal{P}$ maps $L^{p}(l^{q}_{\alpha}, V_{\nu}^{p\eta} \, d\mu_{\nu})$ boundedly into $\tilde{F}^{p, \, q}_{\alpha, \, \eta}(d\mu_{\nu})$, and this completes the proof of Theorem \ref{thm:interpolation_Triebel_Lizorkin_spaces}. 
\end{proof}


\section{Embedding theorems}
\label{sec:embeddings}
In this section, we shall study embedding properties of Besov and Triebel--Lizorkin spaces. We have the following theorem concerning classical Besov spaces. 

\begin{theorem} 
\label{thm:classical_besov_embeddings}
Let $p, \, p_0, \, p_1, \, q, \, q_0, \, q_1 \in [1,\infty], \, \eta \in \R$. 
\begin{enumerate}[(i)]
\item Let $\alpha_0, \, \alpha_1 \geq 0$ be such that either $\alpha_1 < \alpha_0$ or $\alpha_1=\alpha_0$ and $q_1 \geq q_0$. Then, 
\begin{align*}
B^{p,q_0}_{\alpha_0, \eta}(d\mu_{\nu}) \hookrightarrow B^{p, q_1}_{\alpha_1, \eta}(d\mu_{\nu}).    
\end{align*}

\item For any $\alpha_0 > \alpha_1 \geq 0$ such that $\alpha_0 - \alpha_1 = Q\left(\frac{1}{p_0} - \frac{1}{p_1}\right)$, we have 
\begin{align*}
B^{p_0, q}_{\alpha_0, \eta}(d\mu_{\nu}) \hookrightarrow B^{p_1, q}_{\alpha_1, \eta  + \frac{1}{p_0} -\frac{1}{p_1}}(d\mu_{\nu}).
\end{align*}
Moreover, when $\nu \neq 0, \, \eta = 0$ and  $\alpha_1 > 0$, the above embedding can not hold true for any space $B^{p_1, q}_{\alpha_1, \tau}(d\mu_{\nu})$ on the right-hand side with $0 \leq \tau \neq \frac{1}{p_0}-\frac{1}{p_1}$. 

\medskip 
\item If $\alpha > Q/p$, then 
\begin{align*}
\left\| V_{\nu}^{\eta + \frac{1}{p}} \, f \right\|_{L^{\infty}(d\mu_{\nu})} \lesssim \|f\|_{B^{p,\, q}_{\alpha, \eta}(d\mu_{\nu})}.
\end{align*} 
\end{enumerate}
\end{theorem}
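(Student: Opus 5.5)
Part (i) will be handled through the discrete characterisation of Theorem \ref{thm:discrete-characterisation-Besov}, with the same $m>\alpha_0/2$ and the same $t_0$ on both sides. If $\alpha_1=\alpha_0$ and $q_1\ge q_0$, the claim is the embedding $\ell^{q_0}\hookrightarrow\ell^{q_1}$ applied to the sequence $\|2^{j\alpha/2}V_\nu^\eta W^{(m)}_{2^{-j}}f\|_{L^p(d\mu_\nu)}$. If $\alpha_1<\alpha_0$, we write $2^{j\alpha_1/2}=2^{-j(\alpha_0-\alpha_1)/2}\,2^{j\alpha_0/2}$. The $\ell^{q_1}$ sum is then bounded by $\sup_j$ of the $\alpha_0$-sequence times $\|(2^{-j(\alpha_0-\alpha_1)/2})_j\|_{\ell^{q_1}}<\infty$, and the $\sup$ is dominated by the $\ell^{q_0}$ norm. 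The low-frequency term $\|V_\nu^\eta e^{-t_0G_\nu}f\|_{L^p(d\mu_\nu)}$ is identical on both sides.

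For part (ii), the key pointwise tool is Proposition \ref{prop:L^p_L^q_type_ineq_for_heat_operator} combined with Lemma \ref{lemma:ball_volume_and_heat_operator_commutation}. Set $s=\frac1{p_0}-\frac1{p_1}$ and write $W^{(m)}_{t}f=e^{-\frac t2G_\nu}W^{(m)}_{t/2}f$ up to the harmless factor $2^m$. The weight $V_\nu^\eta$ is moved inside the semigroup by Lemma \ref{lemma:ball_volume_and_heat_operator_commutation}, after which Proposition \ref{prop:L^p_L^q_type_ineq_for_heat_operator} gives
\[
\|V_\nu(t)^{s}V_\nu^\eta W^{(m)}_tf\|_{L^{p_1}(d\mu_\nu)}\lesssim \|V_\nu^\eta W^{(m)}_{bt/2}f\|_{L^{p_0}(d\mu_\nu)}
\]
up to the change of time parameter, which Lemma \ref{lem:heat_operator_and_derivative}(i) and the independence results allow. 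Local doubling gives $V_\nu^{s}\lesssim t^{-Qs/2}V_\nu(t)^s$ for $t<1$, so
\[
t^{-\alpha_1/2}\|V_\nu^{\eta+s}W^{(m)}_tf\|_{L^{p_1}}\lesssim t^{-\alpha_1/2-Qs/2}\|V_\nu^\eta W^{(m)}_{ct}f\|_{L^{p_0}}=t^{-\alpha_0/2}\|\cdots\|,
\]
since $\alpha_0-\alpha_1=Qs$. Integrating in $dt/t$ and using Theorem \ref{thm:independence_of_parameters_Besov} to absorb the rescaling $t\mapsto ct$ gives the seminorm bound. The low-frequency term is treated the same way with $t=t_0$, and the same doubling bound shows the weights match. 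For the sharpness claim with $\nu\ne0$, $\eta=0$, the plan is to test on translates $f_a(x)=\phi(x'-a,x'')$ with $a=R\nu/|\nu|$, $R\to\infty$, and $\phi$ a fixed nice bump. Since $V_\nu(x,1)\sim e^{2\nu\cdot x'}(1+|x'|)^{d_2}$, the norms scale like $e^{2R|\nu|/p}$ times polynomial factors. Comparing $e^{2R|\nu|(\tau+1/p_1)}$ with $e^{2R|\nu|/p_0}$ forces $\tau\le s$. For $\tau<s$, we instead send $a\to-\infty$ along $\nu$ and use Grushin dilations to localise frequencies, which is where $\alpha_1>0$ enters. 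I expect this counterexample construction, especially controlling the polynomial factors in $|x'|$ and the $W^{(m)}$ terms under translation in $x'$, to be the main obstacle.

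For part (iii), we use Lemma \ref{lem:decomposition_of_a_distibution/function} in the discrete form $f=\sum_{k<m}\frac1{k!}W^{(k)}_1f+c\sum_n f_n$, with $f_n=\int_{2^{-n}}^{2^{-n+1}}W^{(m)}_sf\,\frac{ds}{s}$. Each piece can be written as $e^{-cs G_\nu}$ applied to something. Proposition \ref{prop:L^p_L^q_type_ineq_for_heat_operator} with $q=\infty$, together with Lemma \ref{lemma:ball_volume_and_heat_operator_commutation}, gives $\|V_\nu^{\eta}V_\nu(2^{-n})^{1/p}f_n\|_{L^\infty}\lesssim\|V_\nu^\eta W^{(m)}_{c2^{-n}}f\|_{L^p}$. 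Doubling converts $V_\nu(2^{-n})^{1/p}$ into $2^{-nQ/(2p)}V_\nu^{1/p}$, so
\[
\|V_\nu^{\eta+1/p}f\|_{L^\infty}\lesssim\|V_\nu^\eta e^{-t_0G_\nu}f\|_{L^p}+\sum_n 2^{n(Q/p-\alpha)/2}\,2^{n\alpha/2}\|V_\nu^\eta W^{(m)}_{c2^{-n}}f\|_{L^p}.
\]
Hölder in $n$ then controls the sum by the $B^{p,q}_{\alpha,\eta}$ norm, because $\alpha>Q/p$ makes $(2^{n(Q/p-\alpha)/2})_n\in\ell^{q'}$.
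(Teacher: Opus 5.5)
Your arguments for (i), for the embedding in (ii), and for (iii) are correct. Part (i) and the embedding in (ii) follow the paper's proof. The paper also moves $V_\nu^\eta$ inside the semigroup with Lemma~\ref{lemma:ball_volume_and_heat_operator_commutation}, applies Proposition~\ref{prop:L^p_L^q_type_ineq_for_heat_operator}, and pays the factor $t^{-\frac{Q}{2}(\frac{1}{p_0}-\frac{1}{p_1})}$ from local doubling. There is a minor slip: after the commutation lemma the semigroup time is $bt$, so you use $V_\nu(t)^s\le V_\nu(bt)^s$ and land on $W^{(m)}_{t/2}f$, not $W^{(m)}_{bt/2}f$. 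For (iii) the paper, arguing as in Theorem~\ref{thm:non_classical_besov_embeddings}, is shorter. It applies (ii) with $p_1=\infty$ and $\alpha_1=\alpha-Q/p>0$. It then uses the case $t_0=0$ of Theorem~\ref{thm:independence_of_parameters_Besov} to bound $\|V_\nu^{\eta+1/p}f\|_{L^\infty(d\mu_\nu)}$ by $\|f\|_{B^{\infty,q}_{\alpha_1,\eta+1/p}(d\mu_\nu)}$. Your dyadic summation with the $L^p\to L^\infty$ estimate is that argument unpacked, and it is fine.

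\textbf{The gap is the ``Moreover'' part of (ii), which your proposal does not prove.} You sketch only $\tau\le s$. For $\tau<s$ you propose an unspecified dilation argument that you yourself flag as the main obstacle. No dilations are needed, $\alpha_1>0$ plays no role, and your own translates give both directions.

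Take $\phi\in C_c^\infty$ with $\phi\ge0$ and $\phi=1$ on $\{|y'|\le1,\ |y''|\le1\}$, and set $f_a(x)=\phi(x'-a,x'')$ with $|a|=R$.

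\emph{Upper bound.} We have $W^{(m)}_tf_a=t^m e^{-tG_\nu}G_\nu^m f_a$. On the support of $f_a$, $|G_\nu^m f_a|\lesssim(1+R)^{2m}$. Also $e^{-tG_\nu}$ is uniformly bounded on $L^{p_0}(d\mu_\nu)$. Hence $\|f_a\|_{B^{p_0,q}_{\alpha_0,0}(d\mu_\nu)}\lesssim(1+R)^{2m}e^{2\nu\cdot a/p_0}$.

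\emph{Lower bound.} Keep only the term $\|V_\nu^\tau e^{-\frac12G_\nu}f_a\|_{L^{p_1}(d\mu_\nu)}$. Consider the box $\{|x'-a|\le1,\ |x''|\le1\}$. There $\rho(x,y)\lesssim1$, $V_\nu\sim e^{2\nu\cdot a}(1+R)^{d_2}$ and $d\mu_\nu\sim e^{2\nu\cdot a}\,dx$. By \eqref{eq:heat_kernel_with_drift} and the lower Gaussian bound in \eqref{est:heat_kernel_bounds}, $e^{-\frac12G_\nu}f_a\gtrsim(1+R)^{-d_2}$ on this box. Hence $\|f_a\|_{B^{p_1,q}_{\alpha_1,\tau}(d\mu_\nu)}\gtrsim e^{2(\nu\cdot a)(\tau+1/p_1)}(1+R)^{d_2(\tau-1)}$.

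With $s=\frac1{p_0}-\frac1{p_1}$, the embedding then yields $e^{2(\nu\cdot a)(\tau-s)}\lesssim(1+R)^{N}$ for all $R$. Taking $a=R\nu/|\nu|$ forces $\tau\le s$, and $a=-R\nu/|\nu|$ forces $\tau\ge s$.

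Once completed this way, your route to sharpness differs from the paper's. The paper proves it in Lemma~\ref{lem:sharpness_classical_theorem}. It sandwiches the Besov spaces between classical Triebel--Lizorkin spaces via Theorem~\ref{thm:relation_between_non_classical_besov_and_triebel_lizorkin_spaces}, identifies $F^{p,2}_{\alpha,0}(d\mu_\nu)=L^p_\alpha(d\mu_\nu)$, and invokes the sharpness of the weighted Sobolev embedding in \cite{GG-preprint-Sobolev-Grushin-2026}. Accordingly, the lemma is stated only for $\alpha_0,\alpha_1>0$, $\tau\ge0$ and $p_0,p_1\in(1,\infty)$. The translation argument is elementary and self-contained. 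It gives $\tau=s$ for every $\tau\in\R$ and every $\alpha_1\ge0$, and also when $p_0=1$ or $p_1=\infty$, which the theorem allows but the lemma does not cover.
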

\begin{proof} 
We shall not prove parts $(i)$ and $(iii)$, as these can be established similarly to the corresponding ones in the embedding of non-classical Besov spaces, which we shall state and prove shortly in Theorem \ref{thm:non_classical_besov_embeddings}. 

\medskip 
For part $(ii)$, we shall only write the details for $q < \infty$, and the case of $q = \infty$ follows with routine modifications. We are working with $p_0 < p_1$ and $\alpha_0 > \alpha_1 \geq 0$ such that $\alpha_0 - \alpha_1 = Q\left(\frac{1}{p_0} - \frac{1}{p_1}\right)$. Note that  
\begin{align*}
\|f\|_{B^{p_1, q}_{\alpha_1, \eta + \frac{1}{p_0} - \frac{1}{p_1}}(d\mu_{\nu})} & =  \left(\int_{0}^{1} \left(t^{-\alpha_1/2}\|V_{\nu}^{\eta + \frac{1}{p_0}-\frac{1}{p_1}} \, W_{t}^{(m)}f\|_{L^{p_1}(d\mu_{\nu})} \right)^{q} \frac{dt}{t} \right)^{1/q} \\
& \quad + \|V_{\nu}^{\eta + \frac{1}{p_0} - \frac{1}{p_1}} \, e^{-\frac{1}{2} G_{\nu}} f\|_{L^{p_1}(d\mu_{\nu})} \\
& =: I_1 + I_2.
\end{align*}
First, we shall work with $I_2$. Using Lemma \ref{lemma:ball_volume_and_heat_operator_commutation} and Proposition \ref{prop:L^p_L^q_type_ineq_for_heat_operator}, we have 
\begin{align*}
I_2 \lesssim \|V_{\nu}^{\frac{1}{p_0} - \frac{1}{p_1}} \, e^{-\frac{b}{2}G_{\nu}} (V_{\nu}^{\eta}|e^{-\frac{1}{4} G_{\nu}} f|)\|_{L^{p_1}(d\mu_{\nu})} \lesssim \|V_{\nu}^{\eta} e^{-\frac{1}{4} G_{\nu}} f\|_{L^{p_0}(d\mu_{\nu})} 
\lesssim \|f\|_{B^{p_0, q}_{\alpha_0, \eta}(d\mu_{\nu})},
\end{align*}
where the last step follows from Theorem \ref{thm:independence_of_parameters_Besov} with $t_0 = 1/4$.

\medskip 
Next, in order to estimate $I_1$, note that for $t \in (0, 1)$, once again using Lemma \ref{lemma:ball_volume_and_heat_operator_commutation} and Proposition \ref{prop:L^p_L^q_type_ineq_for_heat_operator}, we have 
\begin{align*}
& \|V_{\nu}^{\eta +\frac{1}{p_0} -\frac{1}{p_1}} W_{t}^{(m)} f\|_{L^{p_1}(d\mu_{\nu})} \\
& \lesssim t^{-\frac{Q}{2}\left(\frac{1}{p_0}-\frac{1}{p_1}\right)}\left\|V_{\nu}(t)^{\frac{1}{p_0}-\frac{1}{p_1}} \, e^{-btG_{\nu}} |V_{\nu}^{\eta} (tG_{\nu})^{m} e^{-\frac{t}{2}G_{\nu}}f| \right\|_{L^{p_1}(d\mu_{\nu})} \\
& \lesssim t^{-\frac{Q}{2}\left(\frac{1}{p_0}-\frac{1}{p_1}\right)} \left\|V_{\nu}^{\eta}  (tG_{\nu})^{m} e^{-\frac{t}{2}G_{\nu}}f\right\|_{L^{p_0}(d\mu_{\nu})}, 
\end{align*}
which implies that $I_1 \lesssim \|f\|_{B^{p_0, q}_{\alpha_0, \eta}(d\mu_{\nu})}$, and this completes the proof of the claimed embedding. We are left with showing that the embedding $B^{p_0, q}_{\alpha_0, 0}(d\mu_{\nu}) \hookrightarrow B^{p_1, q}_{\alpha_1, \tau}(d\mu_{\nu})$ is not possible for any $0 \leq \tau \neq \frac{1}{p_0}-\frac{1}{p_1}$. We postpone it for now and will prove it later in Lemma \ref{lem:sharpness_classical_theorem} 
\end{proof}

The following theorem is an analogue of the previous result for non-classical Besov spaces. One should note the change in the role of the weight functions $V_\nu^\eta$ in each of the three parts of these two theorems. 

\begin{theorem} 
\label{thm:non_classical_besov_embeddings}
Let $p, \, p_0, \, p_1, \, q, \, q_0, \, q_1 \in [1,\infty]$ and $\eta \in \mathbb{R}$.
\begin{enumerate}[(i)]
\item Let $\alpha_0, \, \alpha_1 \geq 0$ be such that either $\alpha_1 < \alpha_0$ or $\alpha_1=\alpha_0$ and $q_1 \geq q_0$. Then, 
\begin{align*}
\tilde{B}^{p,q_0}_{\alpha_0, \eta}(d\mu_{\nu}) \hookrightarrow \tilde{B}^{p,q_1}_{\alpha_1, \eta + (\alpha_1-\alpha_0)/Q}(d\mu_{\nu}).    
\end{align*}

\item For any $\alpha_0 > \alpha_1 \geq 0$ such that $\alpha_0 - \alpha_1 = Q\left(\frac{1}{p_0} - \frac{1}{p_1}\right)$, we have 
\begin{align*}
\tilde{B}^{p_0, q}_{\alpha_0, \eta}(d\mu_{\nu}) \hookrightarrow \tilde{B}^{p_1, q}_{\alpha_1, \eta}(d\mu_{\nu}).
\end{align*}

\item If $\alpha > Q/p$, then 
\begin{align*}
\left\| V_{\nu}^{\eta+\frac{1}{p}}  \, f \right\|_{L^{\infty}(d\mu_{\nu})} \lesssim \|f\|_{\tilde{B}^{p,\, q}_{\alpha, \eta+ \alpha/Q}(d\mu_{\nu})}.
\end{align*}
\end{enumerate}
\end{theorem}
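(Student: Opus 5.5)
We prove the three parts separately. Throughout, we use Theorems \ref{thm:independence_of_parameters_Besov} and \ref{thm:discrete-characterisation-Besov} to fix one common integer $m>\alpha_0/2$ (resp.\ $m>\alpha/2$) for both spaces in each embedding. We also use two elementary consequences of the ball volume asymptotics \eqref{main:ball-vol-est}, valid for $0<t<1$ and with $d$ the exponent appearing in the proof of Theorem \ref{thm:independence_of_parameters_Triebel_Lizorkin}:
\begin{align*}
\frac{V_\nu(t)}{V_\nu} \lesssim t^{d/2} \qquad \text{and} \qquad V_\nu(\kappa)\sim V_\nu \ \text{ for fixed } \kappa\in(0,1].
\end{align*}
In the second relation the exponential factor $e^{2|\nu|\sqrt{t}}$ is harmless because $t\le 1$.

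\textbf{Part (i).} The low-order terms coincide, since $V_\nu^{-\alpha_1/Q+\eta+(\alpha_1-\alpha_0)/Q}=V_\nu^{-\alpha_0/Q+\eta}$. For the main terms we factor
$$V_\nu(t)^{-\alpha_1/Q}V_\nu^{\eta+(\alpha_1-\alpha_0)/Q}=\Big(\frac{V_\nu(t)}{V_\nu}\Big)^{(\alpha_0-\alpha_1)/Q}V_\nu(t)^{-\alpha_0/Q}V_\nu^{\eta}\lesssim t^{\varepsilon}\,V_\nu(t)^{-\alpha_0/Q}V_\nu^{\eta},$$
with $\varepsilon=d(\alpha_0-\alpha_1)/(2Q)$. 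If $\alpha_1<\alpha_0$, then $\varepsilon>0$. In the discrete form (Theorem \ref{thm:discrete-characterisation-Besov}) the sequence $\|V_\nu(2^{-j})^{-\alpha_1/Q}V_\nu^{\eta+(\alpha_1-\alpha_0)/Q}W^{(m)}_{2^{-j}}f\|_{L^p}$ is therefore bounded by $2^{-j\varepsilon}$ times an $\ell^{q_0}$ sequence, which lies in every $\ell^{q_1}$ by H\"older's inequality. If $\alpha_1=\alpha_0$ and $q_1\ge q_0$, the claim is just the inclusion $\ell^{q_0}\subset\ell^{q_1}$.

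\textbf{Part (ii).} Write $\theta=\frac1{p_0}-\frac1{p_1}$, so that $V_\nu(t)^{-\alpha_1/Q}=V_\nu(t)^{-\alpha_0/Q}V_\nu(t)^{\theta}$. Write $W_t^{(m)}f=e^{-\frac t2G_\nu}\big(2^m W_{t/2}^{(m)}f\big)$. By Lemma \ref{lemma:ball_volume_and_heat_operator_commutation} (applied twice, once for the $t$-weight and once for the unit-scale weight) we move $V_\nu(t)^{-\alpha_0/Q}V_\nu^\eta$ inside the heat operator, at the cost of replacing $e^{-\frac t2 G_\nu}$ by $e^{-ct G_\nu}$ applied to the absolute value. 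Proposition \ref{prop:L^p_L^q_type_ineq_for_heat_operator} then absorbs the remaining factor $V_\nu(\cdot,ct)^{\theta}\sim V_\nu(t)^\theta$ (by local doubling) and gives
$$\|V_\nu(t)^{-\alpha_1/Q}V_\nu^\eta W^{(m)}_tf\|_{L^{p_1}}\lesssim \|V_\nu(t/2)^{-\alpha_0/Q}V_\nu^\eta W^{(m)}_{t/2}f\|_{L^{p_0}}.$$
Integrating in $t$ against $dt/t$ and changing variables handles the main term. The residual range $t\in(1/2,1)$ on the right is controlled by the full norm via Theorem \ref{thm:independence_of_parameters_Besov}. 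For the low-order term we write $e^{-\frac12G_\nu}=e^{-\frac14G_\nu}e^{-\frac14G_\nu}$, use $V_\nu^{\theta}\sim V_\nu(\cdot,1/4)^\theta$, and apply Proposition \ref{prop:L^p_L^q_type_ineq_for_heat_operator} together with Lemma \ref{lemma:ball_volume_and_heat_operator_commutation}. This reduces it to $\|V_\nu^{-\alpha_0/Q+\eta}e^{-t_0G_\nu}f\|_{L^{p_0}}$ for some $t_0\in(0,1)$, which Theorem \ref{thm:independence_of_parameters_Besov} controls. This is the same mechanism as in Theorem \ref{thm:classical_besov_embeddings}(ii), except that the $t$-dependent weights now match exactly, so no extra power of $V_\nu$ appears.

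\textbf{Part (iii).} Apply (ii) with $p_1=\infty$ and $\alpha_1=\alpha-Q/p>0$, starting from the space $\tilde B^{p,q}_{\alpha,\eta+\alpha/Q}$. This places $f$ in $\tilde B^{\infty,q}_{\alpha_1,\eta'}$ with $\eta'=\eta+\alpha/Q$; note that $\eta'-\alpha_1/Q=\eta+1/p$. It therefore suffices to bound $\|V_\nu^{\eta'-\alpha_1/Q}f\|_{L^\infty}$ by that norm. Use Lemma \ref{lem:decomposition_of_a_distibution/function}:
$$f=\sum_{k<m}\tfrac1{k!}W_1^{(k)}f+\tfrac1{(m-1)!}\int_0^1W_t^{(m)}f\,\tfrac{dt}{t}.$$
For the integral, set $g(t)=\|V_\nu(t)^{-\alpha_1/Q}V_\nu^{\eta'}W^{(m)}_tf\|_{L^\infty}$. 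Pointwise this gives $V_\nu^{\eta'-\alpha_1/Q}|W^{(m)}_tf|\le (V_\nu(t)/V_\nu)^{\alpha_1/Q}g(t)\lesssim t^{d\alpha_1/(2Q)}g(t)$. H\"older's inequality in $dt/t$ then bounds the integral by $\|g\|_{L^q(dt/t)}$, and this step is exactly where $\alpha_1>0$ is used. For the finite terms, $W_1^{(k)}f=G_\nu^ke^{-\frac12G_\nu}(e^{-\frac12G_\nu}f)$. Lemma \ref{lem:heat_operator_and_derivative}, Lemma \ref{lemma:ball_volume_and_heat_operator_commutation} and the $L^\infty$-boundedness of the heat operator bound them by $\|V_\nu^{-\alpha_1/Q+\eta'}e^{-\frac12G_\nu}f\|_{L^\infty}$, which is part of the norm.

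I expect the main obstacle to be the weight bookkeeping in (ii): commuting the two distinct weights $V_\nu(t)$ and $V_\nu$ past $e^{-ctG_\nu}$ changes the time parameter, and one must check that this stays within the ranges where Lemma \ref{lemma:ball_volume_and_heat_operator_commutation} and Proposition \ref{prop:L^p_L^q_type_ineq_for_heat_operator} are uniform. One must also check that the endpoint $p_1=\infty$ causes no problem there. For $p_1=\infty$ I would verify this directly from the $L^p\to L^\infty$ estimate \eqref{eq:L^p_to_L^infty} rather than through interpolation.
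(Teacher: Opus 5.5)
Your proposal is correct and follows essentially the paper's proof. Part (i) uses the volume-ratio bound $V_\nu(t)/V_\nu \lesssim t^{d/2}$ together with H\"older's inequality or the inclusion $\ell^{q_0}\subset \ell^{q_1}$. Part (ii) writes $W_t^{(m)}f = 2^m e^{-\frac t2 G_\nu} W^{(m)}_{t/2}f$, moves both weights inside using Lemma \ref{lemma:ball_volume_and_heat_operator_commutation}, and then applies Proposition \ref{prop:L^p_L^q_type_ineq_for_heat_operator}. Part (iii) applies (ii) with $p_1=\infty$; there your decomposition argument simply re-derives the $t_0=0$ case of Theorem \ref{thm:independence_of_parameters_Besov}, which the paper cites instead. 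The ``residual range $t\in(1/2,1)$'' you mention in (ii) never actually arises, since after substituting $s=t/2$ the right-hand side only involves $s\in(0,1/2)$.
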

\begin{proof}
We shall only write the details for $q, \, q_0, \, q_1 < \infty$, and the case when any of $q, \, q_0$ or $q_1$ is $\infty$ follows with routine modifications. 

\medskip 
\textbf{\underline{Part $(i)$}:} 
Let us first consider the case when $\alpha_1 < \alpha_0$ and $q_0 > q_1$. For $0<t<1$, making use of the ball volume estimates $V_{\nu}(x, 1)^{\frac{\alpha_1 -\alpha_0}{Q}} \lesssim t^{d(\alpha_0-\alpha_1)/2Q} V_{\nu}(x, t)^{\frac{\alpha_1 -\alpha_0}{Q}},$ we get 
\begin{align*}
& \left(\int_{0}^{1} \|V_{\nu}(t)^{-\alpha_1/Q} V_{\nu}^{\eta+(\alpha_1-\alpha_0)/Q} \, |W_t^{(m)} f|\|_{L^p(d\mu_{\nu})}^{q_1} \, \frac{dt}{t} \right)^{1/q_1} \\ 
& \lesssim \left(\int_{0}^{1} t^{\frac{dq_1(\alpha_0-\alpha_1)}{2Q}} \left\|V_{\nu}(t)^{-\alpha_0/Q}V_{\nu}^{\eta} \, |W_t^{(m)} f|\right\|_{L^p(d\mu_{\nu})}^{q_1} \, \frac{dt}{t} \right)^{1/q_1} \\ 
& \lesssim \left(\int_{0}^{1}\left\|V_{\nu}(t)^{-\alpha_0/Q}V_{\nu}^{\eta} \, |W_t^{(m)} f|\right\|_{L^p(d\mu_{\nu})}^{q_0} \, \frac{dt}{t} \right)^{1/q_0},
\end{align*}
where the last step follows from H\"{o}lder's inequality with $s= q_0/q_1$. 

\medskip 
In the remaining case, that is, when $\alpha_1 \leq \alpha_0$ and $q_1 \geq q_0$, we make use of the characterisation from Theorem \ref{thm:discrete-characterisation-Besov} and estimate 
\begin{align*}
& \left(\sum_{j=1}^{\infty} \|V_{\nu}(2^{-j})^{-\alpha_1/Q} V_{\nu}^{\eta+(\alpha_1-\alpha_0)/Q} \, |W_{2^{-j}}^{(m)} f|\|_{L^p(d\mu_{\nu})}^{q_1}\right)^{1/q_1} \\ 
& \lesssim \left(\sum_{j=1}^{\infty} \|V_{\nu}(2^{-j})^{-\alpha_0/Q} V_{\nu}^{\eta} \, |W_{2^{-j}}^{(m)} f|\|_{L^p(d\mu_{\nu})} ^{q_1}\right)^{1/q_1} \\ 
& \lesssim \left(\sum_{j=1}^{\infty} \|V_{\nu}(2^{-j})^{-\alpha_0/Q} V_{\nu}^{\eta} \, |W_{2^{-j}}^{(m)} f|\|_{L^p(d\mu_{\nu})}^{q_0}\right)^{1/q_0},    
\end{align*}
where the first step follows from $V_{\nu}(x,1)^{\frac{\alpha_1-\alpha_0}{Q}} \lesssim V_{\nu}(x, 2^{-j})^{\frac{\alpha_1-\alpha_0}{Q}},$ whereas the last step is an application of the embedding of $l^{q_0}$ into $l^{q_1}$. This completes the proof of part $(i)$. 

\medskip 
\textbf{\underline{Part $(ii)$}:} 
We are working with $p_0 < p_1$ and $\alpha_0 > \alpha_1 \geq 0$ such that $\alpha_0 - \alpha_1 = Q\left(\frac{1}{p_0} - \frac{1}{p_1}\right)$. Here also, we shall work with the definition of the Besov norm coming from Theorem \ref{thm:discrete-characterisation-Besov}, that is,   
\begin{align*}
\|f\|_{\tilde{B}^{p_1, q}_{\alpha_1, \eta}(d\mu_{\nu})} & \sim  \left(\sum_{j=1}^{\infty} \|V_{\nu}(2^{-j})^{-\alpha_1/Q} \, V_{\nu}^{\eta} \, |W_{2^{-j}}^{(m)}f|\|_{L^{p_1}(d\mu_{\nu})}^{q} \right)^{1/q} \\
& \quad + \|V_{\nu}^{-\alpha_1/Q +\eta} \, e^{-\frac{1}{2} G_{\nu}} f\|_{L^{p_1}(d\mu_{\nu})} \\
& =: I_1 + I_2,
\end{align*}
where we take $m$ to be an integer such that $m > \alpha_0/2$.

\medskip
First, we shall work with $I_2$. Using Lemma \ref{lemma:ball_volume_and_heat_operator_commutation} and Proposition \ref{prop:L^p_L^q_type_ineq_for_heat_operator}, we have 
\begin{align*}
I_2 
& \lesssim \|V_{\nu}^{\frac{1}{p_0} - \frac{1}{p_1}} \, e^{-\frac{b}{2}G_{\nu}} |V_{\nu}^{-\alpha_0/Q +\eta} \, e^{-\frac{1}{4} G_{\nu}} f|\|_{L^{p_1}(d\mu_{\nu})} \\
& \lesssim \|V_{\nu}^{-\alpha_0/Q+\eta} \, e^{-\frac{1}{4} G_{\nu}} f\|_{L^{p_0}(d\mu_{\nu})} \\
& \lesssim \|f\|_{\tilde{B}^{p_0, q}_{\alpha_0, \eta}(d\mu_{\nu})},
\end{align*}
where the last step follows from Theorem \ref{thm:independence_of_parameters_Besov} with $t_0 = 1/4$. 

\medskip 
Next, in order to estimate $I_1$, note that once again using Lemma \ref{lemma:ball_volume_and_heat_operator_commutation} and Proposition \ref{prop:L^p_L^q_type_ineq_for_heat_operator}, we have 
\begin{align*}
& \|V_{\nu}(2^{-j})^{-\alpha_1/Q} \, V_{\nu}^{\eta} \, |W_{2^{-j}}^{(m)}f|\|_{L^{p_1}(d\mu_{\nu})} \\
& \lesssim \left\| V_{\nu}(2^{-j})^{\frac{1}{p_0}-\frac{1}{p_1}} \, e^{-2^{-j+1} b^2 G_{\nu}} \, |V_{\nu}(2^{-j})^{-\alpha_0/Q}\, V_{\nu}^{\eta} \, W_{2^{-j-1} }^{(m)}f| \right\|_{L^{p_1}(d\mu_{\nu})} \\
& \lesssim \left\| V_{\nu}(2^{-j})^{-\alpha_0/Q}\, V_{\nu}^{\eta} \, W_{2^{-j-1}}^{(m)}f \right\|_{L^{p_0}(d\mu_{\nu})}, 
\end{align*}
which implies that $I_1 \lesssim \|f\|_{\tilde{B}^{p_0, q}_{\alpha_0, \eta}(d\mu_{\nu})}$, and this completes the proof of part $(ii)$. 

\medskip 
\textbf{\underline{Part $(iii)$}:} 
We are given that $\alpha > Q/p \geq 0$. Let $\alpha_1 > 0$ be such that $\alpha-\alpha_1 = Q/p$. In view of part $(ii)$ as above as well as Theorem \ref{thm:independence_of_parameters_Besov} with $t_0 = 0$, we have 
\begin{align*}
\|V_{\nu}^{\eta+\frac{1}{p}} \, f\|_{L^{\infty}(d\mu_{\nu})} = \|V_{\nu}^{- \frac{\alpha_1}{Q} + \eta+\frac{\alpha}{Q}} \, f\|_{L^{\infty}(d\mu_{\nu})} \leq \|f\|_{\tilde{B}^{\infty, q}_{\alpha_1, \eta + \alpha/Q}(d\mu_{\nu})} \lesssim \|f\|_{\tilde{B}^{p,q}_{\alpha, \eta+ \alpha/Q}(d\mu_{\nu})}, 
\end{align*}
which is exactly the claim of part $(iii)$, and this completes the proof of the theorem.
\end{proof}


The following theorem compares Besov and Triebel--Lizorkin spaces. It is an analogue of \cite[Theorem 5.3]{Bruno_Marco_Vallarino_Besov_TL_20} with no major changes in the proof, so we omit its proof. 

\begin{theorem} \label{thm:relation_between_non_classical_besov_and_triebel_lizorkin_spaces}
Let $p \in (1, \infty), \, q \in [1, \infty], \, \alpha \geq 0$ and $\eta \in \mathbb{R}$, then
$$
B^{p, \, \min\{p,q\}}_{\alpha, \eta}(d\mu_{\nu}) \hookrightarrow F^{p, \, q}_{\alpha, \eta}(d\mu_{\nu}) \hookrightarrow B^{p, \, \max\{p,q\}}_{\alpha, \eta}(d\mu_{\nu}),
$$
and 
$$
\tilde{B}^{p, \, \min\{p,q\}}_{\alpha, \, \eta}(d\mu_{\nu}) \hookrightarrow \tilde{F}^{p, \, q}_{\alpha, \, \eta}(d\mu_{\nu}) \hookrightarrow \tilde{B}^{p, \, \max\{p,q\}}_{\alpha, \, \eta}(d\mu_{\nu}).
$$
\end{theorem}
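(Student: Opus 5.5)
The plan is to work with the discrete (Littlewood--Paley) characterisations of Theorems \ref{thm:discrete-characterisation-Besov} and \ref{thm:discrete_characterisation-Triebel--Lizorkin}. For each $j\ge 1$ set
$$a_j := V_{\nu}(2^{-j})^{-\alpha/Q} \, V_{\nu}^{\eta} \, |W_{2^{-j}}^{(m)} f|$$
in the non-classical case, and $a_j := 2^{j\alpha/2} V_{\nu}^{\eta} |W_{2^{-j}}^{(m)} f|$ in the classical case. In both cases we fix $t_0=1/2$ and an integer $m>\alpha/2$; when $\alpha=0$ this choice is still admissible, since $t_0\in(0,1)$. Every norm in the statement then has the same inhomogeneous term, namely $\|V_{\nu}^{-\alpha/Q+\eta}e^{-\frac12 G_\nu}f\|_{L^p(d\mu_\nu)}$, or its classical analogue. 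So it suffices to compare the three quantities
$$\Big(\sum_j \|a_j\|_{L^p}^{\min\{p,q\}}\Big)^{1/\min\{p,q\}}, \qquad \Big\|\big(\textstyle\sum_j a_j^q\big)^{1/q}\Big\|_{L^p}, \qquad \Big(\sum_j \|a_j\|_{L^p}^{\max\{p,q\}}\Big)^{1/\max\{p,q\}},$$
all taken with respect to $d\mu_\nu$, for a fixed sequence of nonnegative measurable functions $(a_j)$. At this point the weights and the operator have disappeared, and what remains is purely an inequality about mixed norms.

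The comparison rests on two elementary facts. The first is the monotonicity of $\ell^r$ norms: $\|\cdot\|_{\ell^{s}}\le\|\cdot\|_{\ell^{r}}$ whenever $r\le s$. The second is Minkowski's inequality between $L^p(\ell^q)$ and $\ell^q(L^p)$. If $q\le p$, then $\|(\sum_j a_j^q)^{1/q}\|_{L^p} = \|\sum_j a_j^q\|_{L^{p/q}}^{1/q}\le (\sum_j\|a_j\|_{L^p}^q)^{1/q}$, by the triangle inequality in $L^{p/q}$. If $q\ge p$, then $(\sum_j\|a_j\|_{L^p}^q)^{1/q}\le \|(\sum_j a_j^q)^{1/q}\|_{L^p}$, by Minkowski's integral inequality applied in $\ell^{q/p}$.

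For the left embedding, suppose first that $q\le p$, so that $\min\{p,q\}=q$; the Minkowski bound gives the estimate directly. If instead $q>p$, we use $\ell^p\subset\ell^q$ pointwise, which gives $\|(\sum a_j^q)^{1/q}\|_{L^p}\le\|(\sum a_j^p)^{1/p}\|_{L^p}=(\sum\|a_j\|_{L^p}^p)^{1/p}$ by Fubini, and here $p=\min\{p,q\}$. The right embedding is symmetric. If $q\ge p$, Minkowski gives $(\sum\|a_j\|_{L^p}^q)^{1/q}\le\|(\sum a_j^q)^{1/q}\|_{L^p}$ with $q=\max\{p,q\}$. If $q<p$, Fubini gives $(\sum\|a_j\|_{L^p}^p)^{1/p}=\|(\sum a_j^p)^{1/p}\|_{L^p}\le\|(\sum a_j^q)^{1/q}\|_{L^p}$, using $\ell^q\subset\ell^p$. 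When $q=\infty$ the sums become suprema, and the same argument works with the obvious changes, for instance $\sup_j\|a_j\|_{L^p}\le\|\sup_j a_j\|_{L^p}$.

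The one step that needs care is the justification of the reduction to the discrete norms in every case. Theorem \ref{thm:discrete_characterisation-Triebel--Lizorkin} requires $1<p<\infty$, which holds by assumption. We must also make sure the same $m$ and the same $t_0$ can be used simultaneously in the Besov norm with exponent $\min\{p,q\}$ or $\max\{p,q\}$ and in the Triebel--Lizorkin norm, and that the case $\alpha=0$ is covered; both characterisation theorems allow $t_0\in(0,1)$ in that case, so fixing $t_0=1/2$ handles it. Once this is settled, the rest of the argument is the routine mixed-norm inequality described above.
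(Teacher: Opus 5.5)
Your proposal is correct and follows essentially the route the paper intends; the paper omits the proof and defers to the analogous Theorem 5.3 of Bruno et al. That route is to pass to the discrete Littlewood--Paley norms of Theorems \ref{thm:discrete-characterisation-Besov} and \ref{thm:discrete_characterisation-Triebel--Lizorkin} (common $m$, $t_0=1/2$) and then compare $\ell^{\min\{p,q\}}(L^p)$, $L^p(\ell^q)$ and $\ell^{\max\{p,q\}}(L^p)$ using Minkowski's inequality and the nesting of $\ell^r$ spaces, which is exactly what you do.
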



In the following two theorems we shall show analogous embedding properties of classical and non-classical Triebel--Lizorkin spaces. Before stating these theorems, let us recall that in \cite{GG-preprint-Sobolev-Grushin-2026}, we had defined and studied Sobolev spaces $L^{p}_{\alpha}(d\mu_{\nu})$ associated with $G_{\nu}$. For a given $p \in (1, \infty)$ and $\alpha \geq 0$, these spaces are defined as 
\begin{align} 
\label{eq:def_of_norms_sobolev_space}
L^{p}_{\alpha}(d\mu_{\nu}) := \{f \in L^p(d\mu_\nu): G_{\nu}^{\alpha/2}f \in L^p(d\mu_\nu)\},    
\end{align}
endowed with the norm 
\begin{align*}
\|f\|_{L^p_\alpha(d\mu_{\nu})} := \|f\|_{L^p(d\mu_\nu)}+\|G_{\nu}^{\alpha/2}f\|_{L^p(d\mu_\nu)}.
\end{align*} 
In part $(iii)$ of the next two theorems, we shall discuss the relation between these Sobolev spaces and the Triebel--Lizorkin spaces $F^{p,2}_{\alpha, 0}(d\mu_{\nu})$ and $\tilde{F}^{p,2}_{\alpha,\eta}(d\mu_{\nu})$ 

\begin{theorem} \label{thm:Triebel_Lizorkin_classical_embeddings}
Let $p, \, p_0, \, p_1, \, q, \, q_0, \, q_1, \, r \in [1,\infty]$ and $\eta \in \R$.
\begin{enumerate}[(i)]
\item Let $\alpha_0, \, \alpha_1 \geq 0$ be such that either $\alpha_1 < \alpha_0$ or $\alpha_1=\alpha_0$ and $q_1 \geq q_0$. Then 
\begin{align*}
F^{p,q_0}_{\alpha_0, \eta}(d\mu_{\nu}) \hookrightarrow F^{p,q_1}_{\alpha_1, \eta}(d\mu_{\nu}).
\end{align*} 

\item Let $1 < p_0 < p_1 < \infty$ and $\alpha_0 > \alpha_1 \geq 0$ be such that $\alpha_0 - \alpha_1 = Q \left(\frac{1}{p_0} - \frac{1}{p_1}\right)$. Also, if we take $p_0 \leq r$ and $q \leq \min\{p_1, \, r\}$, we have 
$$
F^{p_0, q}_{\alpha_0, \, \eta}(d\mu_{\nu}) \hookrightarrow F^{p_1, r}_{\alpha_1, \, \eta + \frac{1}{p_0} - \frac{1}{p_1}} (d\mu_{\nu}).
$$
Moreover, when $\nu \neq 0, \, \eta = 0$ and  $\alpha_1 > 0$, the above embedding can not hold true for any space $F^{p_1, r}_{\alpha_1, \tau}(d\mu_{\nu})$ on the right-hand side with $0 \leq \tau \neq \frac{1}{p_0}-\frac{1}{p_1}$.  

\item Let $p \in (1, \infty)$ and $\alpha \geq 0$. Then $$
L^{p}_{\alpha}(d\mu_{\nu}) = F^{p, \, 2}_{\alpha, 0}(d\mu_{\nu}),
$$
where $L^{p}_{\alpha}(d\mu_{\nu})$ stands for the Sobolev space given in \eqref{eq:def_of_norms_sobolev_space}. 

\medskip 
\item Let $p \in (1, \infty)$ and $\alpha > Q/p$, then we have
$$ \| V_{\nu}^{1/p}  f\|_{L^\infty(d\mu_{\nu})} \lesssim \|f\|_{F^{p,q}_{\alpha, 0} (d\mu_{\nu})}. $$
\end{enumerate}
\end{theorem}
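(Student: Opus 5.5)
Parts (i), (ii) and (iv) will follow the Besov arguments of Theorems \ref{thm:classical_besov_embeddings} and \ref{thm:non_classical_besov_embeddings}, with the vector-valued inequality Proposition \ref{prop:semigroup_version_of_Fefferman_Stein_theorem} replacing plain $L^p$ bounds. Part (iii) needs a separate Littlewood--Paley square function argument.

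For (i), I work with the discrete characterisation of Theorem \ref{thm:discrete_characterisation-Triebel--Lizorkin}, taking one $m>\alpha_0/2$ for both spaces. When $\alpha_1=\alpha_0$ and $q_1\ge q_0$, the claim is the pointwise embedding $\ell^{q_0}\hookrightarrow\ell^{q_1}$. When $\alpha_1<\alpha_0$, the factor $2^{-j(\alpha_0-\alpha_1)/2}$ is summable, so the pointwise $\ell^{q_1}$ norm is bounded by the $\ell^\infty$ norm of $2^{j\alpha_0/2}|V_\nu^\eta W_{2^{-j}}^{(m)}f|$, which is bounded by the $\ell^{q_0}$ norm. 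The inhomogeneous term is unchanged.

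For (ii), I first treat the low-frequency term exactly as $I_2$ in the proof of Theorem \ref{thm:classical_besov_embeddings}, using Lemma \ref{lemma:ball_volume_and_heat_operator_commutation} and Proposition \ref{prop:L^p_L^q_type_ineq_for_heat_operator}. For the main term I follow the Jawerth--Franke route.
\begin{itemize}
\item The hypotheses $q\le\min\{p_1,r\}$ and $p_0\le r$ make $F^{p_0,q}_{\alpha_0,\eta}\hookrightarrow B^{p_1,p_0}$-type and Besov-into-$F$ comparisons available, via Theorem \ref{thm:relation_between_non_classical_besov_and_triebel_lizorkin_spaces}.
\item More concretely, I estimate $|V_\nu^{\eta+1/p_0-1/p_1}W^{(m)}_{2^{-j}}f|$ in two ways. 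The first is the pointwise bound $\lesssim 2^{jQ/(2p_0)}\|V_\nu^\eta W^{(m)}_{2^{-j-1}}f\|_{L^{p_0}}$, which is the $L^{p_0}\to L^\infty$ case of Proposition \ref{prop:L^p_L^q_type_ineq_for_heat_operator}. The second is the bound by the maximal quantity $\sup_k 2^{k\alpha_0/2}|V_\nu^\eta W^{(m)}_{2^{-k}}f|$ after an extra heat operator.
\item I split $j\le N(x)$ and $j>N(x)$ at a level $N(x)$ chosen to balance the two bounds. This is the classical Hedberg-type argument and gives the $F^{p_1,\infty}$ estimate.
\item I then upgrade to $r$ using the $\ell^{q}$ assumptions.
\end{itemize}
This interplay between pointwise and maximal bounds is the main obstacle. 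Because the weights involve only $V_\nu(x,1)$ and the gain $V_\nu(t)^{1/p_0-1/p_1}\gtrsim t^{Q(\cdot)/2}V_\nu^{1/p_0-1/p_1}$ holds by local doubling, the ball-volume factor is absorbed into $\eta+1/p_0-1/p_1$.

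The sharpness statement in (ii) I defer, like the Besov case, to a test-function argument analogous to Lemma \ref{lem:sharpness_classical_theorem}. I would take translates $f(\cdot-a)$ with $\nu\cdot a'\to\pm\infty$, for which the norms scale as exponentials of $\nu\cdot a'$ with exponents depending on $\tau$.

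For (iii), I use the $L^p$ boundedness of the square function $(\int_0^1|W^{(m)}_tg|^2\,dt/t)^{1/2}$ for the diffusion semigroup $e^{-tG_\nu}$ (Stein's Littlewood--Paley theory for symmetric diffusion semigroups), together with its reverse inequality obtained by duality from Lemma \ref{lem:decomposition_of_a_distibution/function}.
\begin{itemize}
\item Writing $t^{-\alpha/2}W_t^{(m)}f=(tG_\nu)^{m-\alpha/2}e^{-tG_\nu}G_\nu^{\alpha/2}f$, I get $\mathcal F^{p,2}_{\alpha,0}(f)\lesssim\|G_\nu^{\alpha/2}f\|_p$.
\item The remaining term satisfies $\|e^{-G_\nu/2}f\|_p\lesssim\|f\|_p$.
\item For the converse, I reconstruct $G_\nu^{\alpha/2}f$ from the decomposition formula and pair against $L^{p'}$ functions, using the square function bound on $L^{p'}$.
\item Low frequencies are controlled by $\|e^{-G_\nu/2}f\|_p$ via the spectral gap (for $\nu\neq0$) or by the bounded multipliers $G_\nu^{\alpha/2}e^{-G_\nu}$ and $(I-e^{-G_\nu/2})$-type terms.
\end{itemize}

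Part (iv) follows from Theorem \ref{thm:relation_between_non_classical_besov_and_triebel_lizorkin_spaces}, which gives $F^{p,q}_{\alpha,0}\hookrightarrow B^{p,\max\{p,q\}}_{\alpha,0}$, combined with Theorem \ref{thm:classical_besov_embeddings}(iii) with $\eta=0$.
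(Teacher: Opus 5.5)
\textbf{The gap is in part (ii).} Your first pointwise bound,
\[
|V_\nu^{\eta+\frac{1}{p_0}-\frac{1}{p_1}}\,W^{(m)}_{2^{-j}}f|\lesssim 2^{jQ/(2p_0)}\,\|V_\nu^{\eta}\,W^{(m)}_{2^{-j-1}}f\|_{L^{p_0}(d\mu_\nu)},
\]
does not follow from Proposition~\ref{prop:L^p_L^q_type_ineq_for_heat_operator}, and it fails uniformly in $x$ when $\nu\neq 0$.

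The $L^{p_0}\to L^\infty$ case of that proposition carries the weight $V_\nu(\cdot,t)^{1/p_0}$. The weight $V_\nu(\cdot,t)^{1/p_0-1/p_1}$ belongs to the $L^{p_0}\to L^{p_1}$ case used in the Besov proof. Combine the correct case with Lemma~\ref{lemma:ball_volume_and_heat_operator_commutation} and $V_\nu(x,t)\gtrsim t^{Q/2}V_\nu(x,1)$. What you actually get is
\[
2^{j\alpha_1/2}\,V_\nu(x,1)^{\eta+\frac{1}{p_0}-\frac{1}{p_1}}\,|W^{(m)}_{2^{-j}}f(x)|\lesssim 2^{jQ/(2p_1)}\,V_\nu(x,1)^{-1/p_1}\,\|M\|_{L^{p_0}(d\mu_\nu)},\qquad M:=\sup_{k\geq 1}2^{k\alpha_0/2}\,V_\nu^{\eta}\,|W^{(m)}_{2^{-k}}f|.
\]
The factor $V_\nu(x,1)^{-1/p_1}$ is unbounded as $\nu\cdot x'\to-\infty$. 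Testing $e^{-tG_\nu}$ on $\chi_{B(x,\sqrt t)}$ shows the weight in the proposition is sharp. So your claim that the ball-volume factor is ``absorbed into $\eta+1/p_0-1/p_1$'' is exactly the step that fails.

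Your last step is not available either. At fixed smoothness $\alpha_1$, no embedding takes an $F^{p_1,\infty}_{\alpha_1,\cdot}$ bound to an $F^{p_1,r}_{\alpha_1,\cdot}$ bound with $r<\infty$. So ``upgrade to $r$ using the $\ell^q$ assumptions'' does not work as stated.

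Both defects can be repaired, and the repaired argument is a genuinely different and stronger route than the paper's.

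\begin{itemize}
\item Your second bound is $2^{-j(\alpha_0-\alpha_1)/2}V_\nu(x,1)^{1/p_0-1/p_1}M(x)$.
\item Since $\alpha_0-\alpha_1=Q(1/p_0-1/p_1)$, both bounds depend on $(x,j)$ only through $Y_j=2^{jQ/2}/V_\nu(x,1)$. They read $Y_j^{1/p_1}\|M\|_{L^{p_0}}$ and $Y_j^{-(1/p_0-1/p_1)}M(x)$.
\item Sum the first over $j\le N$ and the second over $j>N$, directly in $\ell^r$; both sums are geometric.
\item Choose $N(x)$ with $Y_N^{1/p_0}\approx M(x)/\|M\|_{L^{p_0}}$, or $N=0$ if that is impossible. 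This gives the pointwise bound $\|M\|_{L^{p_0}}^{1-p_0/p_1}M(x)^{p_0/p_1}$.
\item Its $L^{p_1}(d\mu_\nu)$-norm is $\|M\|_{L^{p_0}}\lesssim\|f\|_{F^{p_0,q}_{\alpha_0,\eta}(d\mu_\nu)}$, for every $q,r$.
\end{itemize}

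The paper avoids any pointwise analysis. It picks $s$ with $\max\{q,p_0\}\le s\le\min\{r,p_1\}$ and chains
\[
F^{p_0,q}\hookrightarrow F^{p_0,s}\hookrightarrow B^{p_0,s}\hookrightarrow B^{p_1,s}\hookrightarrow F^{p_1,s}\hookrightarrow F^{p_1,r}
\]
through Theorems~\ref{thm:relation_between_non_classical_besov_and_triebel_lizorkin_spaces} and~\ref{thm:classical_besov_embeddings}(ii). That chain is where the restrictions $p_0\le r$ and $q\le\min\{p_1,r\}$ come from, and Remark~\ref{rem:ABCD}(ii) says the authors cannot remove them. The corrected Hedberg argument does not need them, so your first bullet, which gestures at the paper's chain, plays no role.

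\textbf{Sharpness in (ii).} Translates do not simply rescale the norms: $G_\nu$ commutes only with translations in $x''$, while $\nu\cdot a'\to\pm\infty$ requires moving in $x'$. For a bump $\phi_a$ at $a$ you would have to bound $\|\phi_a\|_{F^{p_0,q}_{\alpha_0,0}}$ from above, e.g.\ via $|W^{(m)}_t\phi_a|\lesssim t^m e^{-btG_\nu}|G_\nu^m\phi_a|$ and the heat maximal function. You would also need a lower bound for $\|V_\nu^{\tau}e^{-G_\nu/2}\phi_a\|_{L^{p_1}}$ from the Gaussian lower bound. This gives $e^{2\nu\cdot a'/p_0}$ against $e^{2\nu\cdot a'(\tau+1/p_1)}$, up to polynomial factors in $|a'|$. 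That argument works, and does not even need $\tau\ge 0$, but it is not in your proposal. The paper instead uses (i) and (iii) to reduce to a weighted Sobolev inequality, and quotes the sharpness result of the earlier Sobolev paper (Lemma~\ref{lem:sharpness_classical_theorem}).

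\textbf{Other parts.} Part (i) matches the paper. Part (iii) is in line with what the paper defers to Bruno et al.\ for Lie groups. For (iv) you use $F^{p,q}_{\alpha,0}\hookrightarrow B^{p,\max\{p,q\}}_{\alpha,0}$ followed by Theorem~\ref{thm:classical_besov_embeddings}(iii). This is a valid alternative to the paper's route through $L^p_\beta$, part (iii) and the external Sobolev embedding, and it has the advantage of staying inside this paper.
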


\begin{theorem} \label{thm:Triebel_Lizorkin_non_classical_embeddings}
 Let $p, \, p_0, \, p_1, \, q, \, q_0, \, q_1, \, r \in [1,\infty]$ and $\eta \in \mathbb{R}$. 
\begin{enumerate}[(i)]
\item Let $\alpha_0, \, \alpha_1 \geq 0$ be such that either $\alpha_1 < \alpha_0$ or $\alpha_1=\alpha_0$ and $q_1 \geq q_0$. Then 
\begin{align*}
\tilde{F}^{p,q_0}_{\alpha_0, \eta}(d\mu_{\nu}) \hookrightarrow \tilde{F}^{p,q_1}_{\alpha_1, \eta + (\alpha_1-\alpha_0)/Q}(d\mu_{\nu}).
\end{align*} 

\item Let $1 < p_0 < p_1 < \infty$ and $\alpha_0 > \alpha_1 \geq 0$ be such that $\alpha_0 - \alpha_1 = Q \left(\frac{1}{p_0} - \frac{1}{p_1}\right)$. Also, if we take $p_0 \leq r$ and $q \leq \min\{p_1, \, r\}$, we have 
$$
\tilde{F}^{p_0, q}_{\alpha_0, \, \eta}(d\mu_{\nu}) \hookrightarrow \tilde{F}^{p_1, r}_{\alpha_1, \, \eta} (d\mu_{\nu}).
$$

\item Let $p \in (1, \infty)$ and $\alpha \geq 0$. Then $$
L^{p}_{\alpha}(d\mu_{\nu}) \hookrightarrow \tilde{F}^{p, \, 2}_{\alpha, \, \alpha/Q}(d\mu_{\nu}) \hookrightarrow L^{p}_{\alpha d/Q}(d\mu_{\nu}),
$$
where $L^{p}_{\alpha}(d\mu_{\nu})$ stands for the Sobolev space given in \eqref{eq:def_of_norms_sobolev_space}.
\end{enumerate}
\end{theorem}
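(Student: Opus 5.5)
We treat the three parts separately, working with the discrete characterisation of Theorem \ref{thm:discrete_characterisation-Triebel--Lizorkin} wherever it helps.

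\textbf{Part (i).} This mirrors the proof of Theorem \ref{thm:non_classical_besov_embeddings}(i), now with the $L^p$ norm outside. When $\alpha_1\le\alpha_0$ and $q_1\ge q_0$, we use the pointwise bound $V_\nu^{(\alpha_1-\alpha_0)/Q}\lesssim V_\nu(2^{-j})^{(\alpha_1-\alpha_0)/Q}$ inside the discrete sum and then the embedding $\ell^{q_0}\hookrightarrow\ell^{q_1}$ pointwise in $x$. When $\alpha_1<\alpha_0$ and $q_0<q_1$ fails, we use the stronger bound $V_\nu^{(\alpha_1-\alpha_0)/Q}\lesssim 2^{-jd(\alpha_0-\alpha_1)/(2Q)}V_\nu(2^{-j})^{(\alpha_1-\alpha_0)/Q}$. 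The geometric factor then lets us pass from $\ell^{q_0}$ to $\ell^{q_1}$ pointwise by H\"older's inequality. The inhomogeneous term transforms identically.

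\textbf{Part (ii).} This is the main obstacle, since the Besov argument (Proposition \ref{prop:L^p_L^q_type_ineq_for_heat_operator} applied scale by scale) does not commute with the mixed norm. Our plan is a Jawerth--Franke type argument done through the Besov chain. Write $\alpha_\ast$ for an intermediate smoothness. First apply Theorem \ref{thm:relation_between_non_classical_besov_and_triebel_lizorkin_spaces} to get
$$\tilde F^{p_0,q}_{\alpha_0,\eta}\hookrightarrow\tilde B^{p_0,\max\{p_0,q\}}_{\alpha_0,\eta}.$$
Then apply Theorem \ref{thm:non_classical_besov_embeddings}(ii) to reach $\tilde B^{p_1,\max\{p_0,q\}}_{\alpha_1,\eta}$. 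Since $p_0\le r$ and $q\le r$, we have $\max\{p_0,q\}\le r$, and we would like to land in $\tilde B^{p_1,\min\{p_1,r\}}_{\alpha_1,\eta}$, which embeds into $\tilde F^{p_1,r}_{\alpha_1,\eta}$. This requires $\max\{p_0,q\}\le \min\{p_1,r\}$, which holds because $p_0<p_1$, $q\le p_1$ and $q\le r$. Should the Besov-index bookkeeping fail, we would instead run a direct argument. In that argument we bound $|W_{2^{-j}}^{(m)}f|$ pointwise in two ways: by a sup-type quantity coming from the $L^{p_0}\to L^\infty$ bound \eqref{eq:L^p_to_L^infty} (Lemma \ref{lemma:ball_volume_and_heat_operator_commutation} moves the weights), and by a maximal function. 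We then split the $j$-sum at a threshold depending on $x$, in the manner of the Hedberg trick.

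The classical case follows the same way. The extra weight $V_\nu^{1/p_0-1/p_1}$ appears precisely as in Theorem \ref{thm:classical_besov_embeddings}(ii). For the sharpness statement we cite the forthcoming Lemma \ref{lem:sharpness_classical_theorem}, testing with translates in the $x'$ direction along $\nu$, where $V_\nu\sim e^{2\nu\cdot x'}$.

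\textbf{Parts (iii) and (iv).} For (iii) we need a square-function characterisation: $\|G_\nu^{\alpha/2}f\|_{L^p}\sim\|(\int_0^1|t^{-\alpha/2}W_t^{(m)}f|^2\,dt/t)^{1/2}\|_{L^p}$ modulo low-frequency terms. We would obtain this from $L^p(d\mu_\nu)$ Littlewood--Paley--Stein $g$-function bounds for the symmetric diffusion semigroup $e^{-tG_\nu}$ (Stein's general theory), applied to $G_\nu^{\alpha/2}f$, together with the Sobolev results of \cite{GG-preprint-Sobolev-Grushin-2026} to control the inhomogeneous term. The non-classical sandwich then follows from the local doubling comparisons
$$t^{-\alpha d/(2Q)}V_\nu^{-\alpha/Q}\lesssim V_\nu(t)^{-\alpha/Q}\lesssim t^{-\alpha/2}V_\nu^{-\alpha/Q}.$$
Finally, (iv) follows from (i), (ii) and the Besov embedding of Theorem \ref{thm:classical_besov_embeddings}(iii) via Theorem \ref{thm:relation_between_non_classical_besov_and_triebel_lizorkin_spaces}.
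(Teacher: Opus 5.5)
Your proposal is correct and follows the paper's proof. Part (i) mirrors Theorem \ref{thm:non_classical_besov_embeddings}(i). Part (ii) is the same chain through Theorems \ref{thm:relation_between_non_classical_besov_and_triebel_lizorkin_spaces} and \ref{thm:non_classical_besov_embeddings}(ii): the paper fixes one $s$ with $\max\{p_0,q\}\le s\le\min\{p_1,r\}$ and changes the $q$-index in the $\tilde F$-scale, while you change it in the $\tilde B$-scale, so your fallback Hedberg-type argument is never needed. Part (iii) combines the classical identity $L^p_\beta=F^{p,2}_{\beta,0}$ at $\beta=\alpha$ and $\beta=\alpha d/Q$ with the two local doubling comparisons. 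The only real difference is that you sketch a Littlewood--Paley--Stein proof of that classical identity, whereas the paper simply invokes Theorem \ref{thm:Triebel_Lizorkin_classical_embeddings}(iii). Your remarks on the classical case, the sharpness of $\tau$ and a part (iv) belong to that companion classical theorem, not to this statement.
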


We have some remarks on the above results. 
\begin{remark}
\label{rem:ABCD}
The following remarks on Theorems \ref{thm:Triebel_Lizorkin_classical_embeddings} and \ref{thm:Triebel_Lizorkin_non_classical_embeddings} are in order. 
\begin{enumerate}[(i)]
\item 
One may note the difference in parts $(iii)$ of both of these theorems. Unlike the classical case of $F^{p, \, 2}_{\alpha, 0}(d\mu_{\nu})$, we don't get a direct equivalence of the Triebel--Lizorkin space $\tilde{F}^{p, \, 2}_{\alpha, \, \alpha/Q}(d\mu_{\nu})$ with a single Sobolev space. Instead, we have a two-sided embedding with adjustment in the order of smoothness. 

\medskip 
\item 
In both of these theorems, our result in part $(ii)$ works only for limited indices, namely, when $p_0 \leq r$ and $q \leq \min\{p_1, \, r\}$. The direct arguments for proving the embedding do not work in our case. Instead, we make use of Theorems 
\ref{thm:non_classical_besov_embeddings} and 
\ref{thm:relation_between_non_classical_besov_and_triebel_lizorkin_spaces} to study these embeddings, and in doing so we get the mentioned constraints on the choice of parameters. We believe that this result should hold true without these constraints, but we do not have a proof right now. However, if we consider $\nu = 0$, then following the proof of \cite[Theorem 5.2 $(ii)$]{Bruno_Marco_Vallarino_Besov_TL_20}, one can easily prove that the embedding for the classical Triebel--Lizorkin spaces $F^{p_0, q}_{\alpha_0, 0}(dx) \hookrightarrow F^{p_1, r}_{\alpha_1, 0}(dx)$ holds true for any $q, \, r \in [1, \infty]$. 

\medskip \item 
We are unable to establish an analogue of part $(iv)$ of Theorem \ref{thm:Triebel_Lizorkin_classical_embeddings} in Theorem \ref{thm:Triebel_Lizorkin_non_classical_embeddings}. We do expect that the following embedding should hold true, but we don't know how to prove it (if true). Given $p \in (1, \infty)$ and $\alpha > Q/p$, we expect that 
$$\left \|V_{\nu}^{1/p} \, f \right\|_{L^\infty(d\mu_{\nu})} \lesssim \|f\|_{\tilde{F}^{p,q}_{\alpha, \, \alpha/Q} (d\mu_{\nu})}.$$
\end{enumerate} 
\end{remark}

Let us now sketch a proof of Theorem \ref{thm:Triebel_Lizorkin_classical_embeddings}. We shall only prove part $(iv)$ of this theorem as the proof of parts $(i)$ and $(ii)$ can be done on the same lines as that of the analogous ones in Theorem \ref{thm:Triebel_Lizorkin_non_classical_embeddings}, whereas the proof of part $(iii)$ can be written simply following the proof of \cite[Theorem 5.2 $(iii)$]{Bruno_Marco_Vallarino_Besov_TL_20}. The claim regarding the non-embedding with any $0 \leq \tau \neq \frac{1}{p_0} - \frac{1}{p_1}$ in part $(ii)$ will be taken up shortly in Lemma \ref{lem:sharpness_classical_theorem}. 

\medskip 
Finally, for part $(iv)$, given $\alpha > Q/p$, let us choose $\beta$ such that $\alpha > \beta > Q/p$, and then it follows from parts $(i)$ and $(iii)$ of this theorem and \cite[Theorem 1.3 (ii)]{GG-preprint-Sobolev-Grushin-2026} that 
$$ \| V_{\nu}^{1/p}  f\|_{L^\infty(d\mu_{\nu})} \lesssim \|f\|_{L^{p}_{\beta}(d\mu_{\nu})} \sim \|f\|_{F^{p,2}_{\beta, 0} (d\mu_{\nu})} \lesssim \|f\|_{F^{p,q}_{\alpha, 0} (d\mu_{\nu})},$$ 
completing the claim of part $(iv)$.

\begin{proof}[Proof of Theorem \ref{thm:Triebel_Lizorkin_non_classical_embeddings}]
Part $(i)$ can be proved following the proof of part $(i)$ of Theorem \ref{thm:non_classical_besov_embeddings}, so we omit the details. For part $(ii)$, let us choose an $s$ such that $q \leq s \leq r$ and $p_0 \leq s \leq p_1$. Then, using Theorems \ref{thm:non_classical_besov_embeddings} and \ref{thm:relation_between_non_classical_besov_and_triebel_lizorkin_spaces}, we have
\begin{align*}
\tilde{F}^{p_0, \, q}_{\alpha_0, \, \eta}(d\mu_{\nu}) \hookrightarrow \tilde{F}^{p_0, \, s}_{\alpha_0, \, \eta}(d\mu_{\nu}) \hookrightarrow \tilde{B}^{p_0, \, s}_{\alpha_0, \, \eta}(d\mu_{\nu}) \hookrightarrow \tilde{B}^{p_1, \, s}_{\alpha_1, \, \eta}(d\mu_{\nu}) \hookrightarrow \tilde{F}^{p_1, \, s}_{\alpha_1, \, \eta}(d\mu_{\nu})  \hookrightarrow \tilde{F}^{p_1, \, r}_{\alpha_1, \, \eta}(d\mu_{\nu}),
\end{align*}
which proves the claimed embedding. 

\medskip 
Finally, in view of the estimates for $t \in (0,1)$, 
$$
t^{-d\alpha/2Q} \, V_{\nu}(x, 1)^{-\alpha/Q} \lesssim V_{\nu}(x, t)^{-\alpha/Q} \lesssim t^{-\alpha/2} \, V_{\nu}(x, 1)^{-\alpha/Q}, 
$$ 
part $(iii)$ follows from part $(iii)$ of Theorem \ref{thm:Triebel_Lizorkin_classical_embeddings}. 
\end{proof}


We now address the role of $\tau = \frac{1}{p_0} - \frac{1}{p_1}$ mentioned in parts $(ii)$ of Theorems \ref{thm:classical_besov_embeddings} and \ref{thm:Triebel_Lizorkin_classical_embeddings} in the case when $\eta = 0$. 
\begin{lemma} \label{lem:sharpness_classical_theorem}
Let $p_0, \,  p_1 \in (1, \infty), \, q, \, r \in [1, \infty], \, \nu \neq 0$ and $\alpha_0, \, \alpha_1 > 0$. If the following embedding holds true 
\begin{align*}
F^{p_0, q}_{\alpha_0, 0}(d\mu_{\nu}) \hookrightarrow F^{p_1, r}_{\alpha_1, \tau}(d\mu_{\nu}), 
\end{align*}
for some $\tau \geq 0$, then we must have $\tau = \frac{1}{p_0} - \frac{1}{p_1}$. The same result holds true for the classical Besov spaces as well. 
\end{lemma}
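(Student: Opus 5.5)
We test the assumed embedding against translates of one fixed bump along the drift direction and compare how both sides scale with the translation parameter. Fix a nonnegative $\phi\in C_c^\infty(\R^{d_1+d_2})$, $\phi\not\equiv 0$, say supported near the origin. For $s>0$ put $\phi_s(x)=\phi(x'-s\nu/|\nu|,x'')$. Translation in $x'$ commutes with $G$, since the coefficient $|x'|^2$ of $\Delta_{x''}$ only changes on a bounded set. To avoid this issue we can translate in the $x'$ direction only and use $V_\nu(x,t)\sim e^{2\nu\cdot x'}|B(x,\sqrt t)|$. The factor $|B(x,\sqrt t)|\sim t^{(d_1+d_2)/2}(\sqrt t+|x'|)^{d_2}$ grows only polynomially in $s$, while $e^{2\nu\cdot x'}\sim e^{2|\nu|s}$ near the support of $\phi_s$.

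The strategy is to show two estimates, each up to factors polynomial in $s$:
\[
\|\phi_s\|_{F^{p_0,q}_{\alpha_0,0}}\lesssim \mathrm{poly}(s)\,e^{2|\nu|s/p_0},
\qquad
\|\phi_s\|_{F^{p_1,r}_{\alpha_1,\tau}}\gtrsim \mathrm{poly}(s)^{-1}\,e^{2|\nu|s(\tau+1/p_1)}.
\]
Letting $s\to\infty$ in the assumed embedding then forces $\tau+1/p_1\le 1/p_0$, that is $\tau\le \frac1{p_0}-\frac1{p_1}$.

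For the upper bound, we use the vector-field characterisation (Theorem \ref{thm:vector_field_characterisation_Triebel_Lizorkin_norm}), or alternatively part (iii) of Theorem \ref{thm:Triebel_Lizorkin_classical_embeddings} with embeddings from part (i) and $L^p_\beta$. We dominate the norm by finitely many $\|X^\gamma \phi_s\|_{L^{p_0}(d\mu_\nu)}$. The fields $X_{j,k}=x'_j\partial_{x''_k}$ bring in factors $|x'|\lesssim s$, which are polynomial, and $d\mu_\nu\sim e^{2|\nu|s}$ on the support. This gives the upper bound. For $\alpha_0$ large one can instead use Sobolev spaces with an integer order $\ge\alpha_0$, expressed through $G_\nu^k\phi_s$. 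These are again compactly supported with polynomially bounded coefficients, since $G_\nu=G-2\nu\cdot X'$.

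For the lower bound, it suffices to bound from below the low-frequency part $\|V_\nu^{\tau}e^{-\frac12 G_\nu}\phi_s\|_{L^{p_1}(d\mu_\nu)}$. By \eqref{eq:heat_kernel_with_drift} and the lower Gaussian bound \eqref{est:heat_kernel_bounds}, for $x$ within distance $1$ of $(s\nu/|\nu|,0)$ we have
\[
e^{-\frac12 G_\nu}\phi_s(x)=\int H_{1/2,\nu}(x,y)\phi_s(y)\,d\mu_\nu(y)\gtrsim \int e^{-\nu\cdot(x'+y')}|B(x,1)|^{-1}e^{2\nu\cdot y'}\phi_s(y)\,dy .
\]
Since $\nu\cdot x'\approx \nu\cdot y'\approx |\nu|s$, this is $\gtrsim \mathrm{poly}(s)^{-1}$. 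Multiplying by $V_\nu^\tau\sim e^{2\tau|\nu|s}$ (up to polynomial factors) and integrating over that unit ball with respect to $d\mu_\nu$ gives the lower bound $e^{2|\nu|s(\tau+1/p_1)}$. Here $\tau\ge0$ is used only so that the sign of the polynomial weight factors does not matter.

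The reverse inequality $\tau\ge\frac1{p_0}-\frac1{p_1}$ is the main obstacle, because pushing $s\to-\infty$ gives only the same inequality. We would instead use spreading in the $x''$ direction, or dilation of a bump at fixed large $x'$. In the latter approach we take $\phi^{(\lambda)}$ supported in a set of $x''$-extent $\lambda$ around $x'=s\nu/|\nu|$ and spread out in $x''$, so that $\|\phi^{(\lambda)}\|_{L^p(d\mu_\nu)}\sim (\lambda^{d_2}e^{2|\nu|s})^{1/p}$. The derivatives in $x''$ are small, so both norms reduce to $L^p$-type quantities, and Proposition \ref{prop:L^p_L^q_type_ineq_for_heat_operator} with the lower heat-kernel bound gives matching upper and lower estimates. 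Letting $\lambda\to\infty$ with $s$ fixed gives $\lambda^{d_2/p_1}\lesssim\lambda^{d_2/p_0}$, which holds automatically and says nothing about $\tau$. We therefore expect to need a test function where the weight $V_\nu^\tau$ competes with the exponential in the reverse direction. Our candidate is a function whose mass is concentrated where $e^{2\nu\cdot x'}$ is small, for instance $s\to-\infty$, combined with the fact that $\tau\ge0$ makes $V_\nu^\tau$ small there. The $L^{p_0}\to L^{p_1}$ comparison at unit scale then reverses the exponent inequality: as $s\to-\infty$ the upper bound $e^{-2|\nu||s|/p_0}$ must dominate $e^{-2|\nu||s|(\tau+1/p_1)}$, which forces $\tau+1/p_1\ge 1/p_0$. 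This is the step to check most carefully, since it hinges on the signs. The argument for Besov spaces is identical, using Theorem \ref{thm:vector_field_characterisation_Besov_norm}.
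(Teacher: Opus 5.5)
\textbf{Verdict.} Your argument is correct and takes a genuinely different route from the paper. However, the write-up has a sign confusion that makes you think the second half is hard.

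\textbf{The sign confusion.} Your two displayed estimates hold for every real $s$ with $|s|$ large. The factor $e^{2|\nu|s}$ is just $e^{2\nu\cdot x'}$ on the support of $\phi_s$, whatever the sign of $s$, and every other factor is a power of $1+|s|$. The assumed embedding therefore gives
\[
e^{2|\nu|s\,(\tau+\frac{1}{p_1}-\frac{1}{p_0})}\lesssim (1+|s|)^{C}\qquad\text{for all large } |s|.
\]
Letting $s\to+\infty$ gives $\tau\le\frac1{p_0}-\frac1{p_1}$, and letting $s\to-\infty$ gives $\tau\ge\frac1{p_0}-\frac1{p_1}$. Your earlier remark that $s\to-\infty$ ``gives only the same inequality'' is wrong, so there is no obstacle. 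The $x''$-spreading digression can be deleted, and your final paragraph is already the correct argument and needs no hedging.

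\textbf{Two smaller corrections.} First, translation in $x'$ does not commute with $G$, since the coefficient $|x'|^2$ changes everywhere. You never use this, so drop the sentence. Second, the vector-field characterisation does not reduce the upper bound to finitely many $\|X^\gamma\phi_s\|_{L^{p_0}(d\mu_\nu)}$. It involves $X^\gamma e^{-tG_\nu}\phi_s$, and $X^\gamma$ does not commute with the semigroup. Instead, work directly in the definition with $m=[\alpha_0/2]+1$:
\begin{itemize}
\item use $W^{(m)}_t\phi_s=t^m e^{-tG_\nu}(G_\nu^m\phi_s)$;
\item apply $L^{p_0}$-boundedness of the heat maximal operator for $F$, or just uniform boundedness of $e^{-tG_\nu}$ for $B$;
\item use $\|G_\nu^m\phi_s\|_{L^{p_0}(d\mu_\nu)}\lesssim(1+|s|)^{2m}e^{2|\nu|s/p_0}$.
\end{itemize}
This gives the upper bound for both scales without the Sobolev identification. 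The lower bound comes straight from the term $\|V_\nu^\tau e^{-\frac12 G_\nu}\phi_s\|_{L^{p_1}(d\mu_\nu)}$ in the definition, again for both $F$ and $B$.

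\textbf{Comparison with the paper.} The paper chains together:
\begin{itemize}
\item $\|V_\nu^\tau f\|_{L^{p_1}(d\mu_\nu)}\lesssim\|f\|_{F^{p_1,r}_{\alpha_1,\tau}}$, from Theorem \ref{thm:vector_field_characterisation_Triebel_Lizorkin_norm};
\item the assumed embedding;
\item part (i) of Theorem \ref{thm:Triebel_Lizorkin_classical_embeddings};
\item the identification $F^{p_0,2}_{\tilde\alpha_0,0}=L^{p_0}_{\tilde\alpha_0}$.
\end{itemize}
This yields a weighted Sobolev inequality, and the paper then cites the Sobolev non-embedding theorem of its companion paper. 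The Besov case is reduced to the Triebel--Lizorkin case via Theorem \ref{thm:relation_between_non_classical_besov_and_triebel_lizorkin_spaces}, at the cost of an $\epsilon$ of smoothness. That route is shorter once the external result is available. Yours is self-contained and handles both scales at once. It also never uses $\tau\ge0$, because $V_\nu(x,1)^\tau$ is comparable to $e^{2\tau\nu\cdot x'}$ times a polynomial for every real $\tau$. So you in fact prove the rigidity for all $\tau\in\R$, effectively reproving with translated bumps the Sobolev non-embedding that the paper imports.
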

\begin{proof}
Working first with the classical Triebel--Lizorkin spaces, assume that the mentioned embedding holds true for some $\tau \geq 0$. Now, choose $\tilde{\alpha}_0$ such that $\tilde{\alpha}_0 > \alpha_0$. Using parts $(i)$ and $(iii)$ of  Theorem \ref{thm:Triebel_Lizorkin_classical_embeddings}, we have
\begin{align*}
\|V_{\nu}^{\tau} \, f\|_{L^{p_1}(d\mu_{\nu})} \lesssim \|f\|_{F^{p_1, r}_{\alpha_1, \tau}(d\mu_{\nu})} \lesssim \|f\|_{F^{p_0, q}_{\alpha_0, 0}(d\mu_{\nu})} \lesssim \|f\|_{F^{p_0, 2}_{\tilde{\alpha}_0, 0}(d\mu_{\nu})} \sim \|f\|_{L^{p_0}_{\tilde{\alpha}_0}(d\mu_{\nu})},
\end{align*}
which in particular implies that 
$$ \|V_{\nu}^{\tau} \, f\|_{L^{p_1}(d\mu_{\nu})} \lesssim \|f\|_{L^{p_0}_{\tilde{\alpha}_0}(d\mu_{\nu})}, $$
but thanks to \cite[Theorem 1.3 $(iii)$]{GG-preprint-Sobolev-Grushin-2026}, the above inequality is not possible unless $\tau = \frac{1}{p_0} - \frac{1}{p_1}$. 

\medskip 
We shall use the above result on the classical Triebel--Lizorkin spaces to prove the same for the classical Besov spaces. So, let us assume that $B^{p_0, q}_{\alpha_0, 0}(d\mu_{\nu}) \hookrightarrow B^{p_1, r}_{\alpha_1, \tau}(d\mu_{\nu}),$ for some $\tau \geq 0$. Choose and fix $0 < \epsilon < \alpha_1$. Using Theorems \ref{thm:classical_besov_embeddings} and \ref{thm:relation_between_non_classical_besov_and_triebel_lizorkin_spaces}, we get
\begin{align*}
F^{p_0, p_0}_{\alpha_0 + \epsilon, 0}(d\mu_{\nu}) \hookrightarrow B^{p_0, p_0}_{\alpha_0 + \epsilon, 0}(d\mu_{\nu}) \hookrightarrow B^{p_0, q}_{\alpha_0, 0}(d\mu_{\nu}) \hookrightarrow B^{p_1, r}_{\alpha_1, \tau}(d\mu_{\nu}) \hookrightarrow B^{p_1, p_1}_{\alpha_1-\epsilon, \tau}(d\mu_{\nu}) \hookrightarrow F^{p_1, p_1}_{\alpha_1-\epsilon, \tau}(d\mu_{\nu}), 
\end{align*}
which is already shown to be not possible unless $\tau = \frac{1}{p_0} - \frac{1}{p_1}$.
\end{proof}


\section{Fractional Leibniz Rules}
\label{sec:algebra_properties}
In this section, we study the fractional Leibniz rules for the Besov and Triebel--Lizorkin spaces. 

\begin{theorem} \label{thm:algebra_properties_Besov_spaces}
Let $p, \, p_1, \,  p_2, \, p_3, \, p_4 \in [1,\infty)$ and $q \in [1, \infty]$ be such that $\frac{1}{p} = \frac{1}{p_1} + \frac{1}{p_2} = \frac{1}{p_3} + \frac{1}{p_4}$. Given any $\alpha > 0$ and $\eta \in \R$, we have 
\begin{equation*}
\|fg\|_{\tilde{B}^{p, q}_{\alpha, \eta}(d\mu_{\nu})} \lesssim \|f\|_{\tilde{B}^{p_1,q}_{\alpha, \eta}(d\mu_{\nu})} \|g\|_{L^{p_2}(d\mu_{\nu})} + \|f\|_{L^{p_3}(d\mu_{\nu})} \|g\|_{\tilde{B}^{p_4,q}_{\alpha, \eta}(d\mu_{\nu})},    
\end{equation*}
for all $f \in \tilde{B}^{p_1,q}_{\alpha, \eta}(d\mu_{\nu}) \cap L^{p_3}(d\mu_{\nu})$ and $g \in \tilde{B}^{p_4, q}_{\alpha, \eta}(d\mu_{\nu}) \cap L^{p_2}(d\mu_{\nu})$. 
\end{theorem}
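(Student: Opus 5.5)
Following the strategy of \cite[Theorem 7.1]{Bruno_Marco_Vallarino_Besov_TL_20} and \cite{Feneuil_Joseph_Algebra_properties_2018}, I would work with the vector-field characterisation of Theorem \ref{thm:vector_field_characterisation_Besov_norm} for the lower bound of $fg$ only in an indirect way. The main device is the paraproduct-type decomposition produced by Lemma \ref{lem:decomposition_of_a_distibution/function}. Fix $m>\alpha/2$ (large, say $m > \alpha/2 + Q$) and write $f=\sum_{n\ge 0} f_n$, $g = \sum_{n \ge 0} g_n$. Here $f_0,g_0$ collect the low-frequency terms $\sum_{k<m}\frac{1}{k!}W_1^{(k)}$, and $f_n = \int_{2^{-n}}^{2^{-n+1}} W_s^{(m)} f\,\frac{ds}{s}$ for $n\ge1$. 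Then $fg=\sum_{n,l} f_n g_l$. Splitting into $l\le n$ and $l>n$, we get $fg = \sum_n f_n S_n g + \sum_l S_{l-1} f\, g_l$ with $S_n g=\sum_{l\le n} g_l$. By symmetry it suffices to treat $\Pi(f,g)=\sum_n f_n S_n g$ and bound it by $\|f\|_{\tilde B^{p_1,q}_{\alpha,\eta}}\|g\|_{L^{p_2}}$.

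The key pointwise bounds come from Lemma \ref{lem:heat_operator_and_derivative}. First, $|S_n g|\lesssim \mathcal{M} g$, a uniform heat-maximal bound; in practice this is $\sup_{t} e^{-ctG_\nu}|g|$, which is $L^{p_2}$-bounded by the symmetric diffusion semigroup maximal theorem (for $p_2>1$; for $p_2=1$ or $\infty$ I would instead keep $\|S_ng\|_{L^{p_2}}\lesssim\|g\|_{L^{p_2}}$, which suffices for Besov since the $L^p$ norm is taken before the $\ell^q$ sum). Second, for $|\gamma|\le 2m$ and $t\in[2^{-j},2^{-j+1}]$, $|X^\gamma e^{-tG_\nu} f_n|\lesssim 2^{m\min\{j,n\}}2^{-jm}\,e^{-c2^{-j}G_\nu}F_n$, exactly as in the proof of Theorem \ref{thm:vector_field_characterisation_Triebel_Lizorkin_norm}, and similarly $X^\gamma S_n g$ is bounded by $2^{n|\gamma|/2}$ times a heat average of $|g|$. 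To estimate $W_{2^{-j}}^{(m)}(f_n S_ng)$ I would write $G_\nu^m$ as a polynomial in the vector fields (including the drift terms $\nu\cdot X'$, which are harmless for $t<1$) and use the Leibniz rule for $X^\gamma$ on products. For $n\ge j$, move no derivatives and use $|W^{(m)}_{2^{-j}} h|\lesssim e^{-c2^{-j}G_\nu}|h|$. For $n<j$, expand $X^\gamma(f_nS_ng)$, which gains $2^{-(j-n)m}$.

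Next, the weights. I would apply Lemma \ref{lemma:ball_volume_and_heat_operator_commutation} to pass $V_\nu(2^{-j})^{-\alpha/Q}V_\nu^\eta$ through the heat operators. Then I would use the ratio $\big(V_\nu(2^{-n})/V_\nu(2^{-j})\big)^{\alpha/Q}$, bounded by $2^{(j-n)\alpha/2}$ for $n\le j$ and by $2^{(j-n)d\alpha/(2Q)}$ for $n>j$, together with Hölder's inequality in $x$ with $\frac1p=\frac1{p_1}+\frac1{p_2}$. This reduces the $\tilde{\mathcal B}^{p,q}_{\alpha,\eta}$ seminorm of $\Pi(f,g)$ to
\[
\Big(\sum_j\Big(2^{-j(m-\alpha/2)}\sum_n 2^{m\min\{j,n\}}\,2^{-n\alpha/2}a_n\Big)^q\Big)^{1/q}\|g\|_{L^{p_2}},
\]
together with the analogous term using the exponent $d\alpha/2Q$. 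Here $a_n=\|V_\nu(2^{-n})^{-\alpha/Q}V_\nu^\eta F_n\|_{L^{p_1}}$. Lemma \ref{lem:Cor_of_Schur_lemma}(i) then bounds this by $\|f\|_{\tilde B^{p_1,q}_{\alpha,\eta}}\|g\|_{L^{p_2}}$ via Theorem \ref{thm:discrete-characterisation-Besov}. The low-order term $\|V_\nu^{-\alpha/Q+\eta}e^{-\frac12G_\nu}(fg)\|_{L^p}$ is handled by Lemma \ref{lemma:ball_volume_and_heat_operator_commutation} and Hölder. It is bounded by $\|V_\nu^{-\alpha/Q+\eta}f\|_{L^{p_1}}\|g\|_{L^{p_2}}$, which is controlled by the Besov norm through Theorem \ref{thm:independence_of_parameters_Besov} with $t_0=0$.

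I expect the main obstacle to be the case $n<j$, where derivatives fall on $S_ng$. In that case $f_n$ carries only the weight at scale $2^{-n}$, and the $L^{p_2}$ norm of $g$ carries no weight at all. The full factor $V_\nu(2^{-j})^{-\alpha/Q}V_\nu^\eta$ must therefore be absorbed by $f_n$ alone, and the Schur exponent must stay positive. This requires $m>\alpha/2$ in the first piece; the second piece uses $d\alpha/2Q\le\alpha/2$, which also works. A further subtlety is that $V_\nu^\eta$ is not constant, so every product must be split via Lemma \ref{lemma:ball_volume_and_heat_operator_commutation} before applying Hölder. If the resulting exponential drift factors cause trouble, I would fall back on the pointwise commutation with $2b$-dilated heat kernels.
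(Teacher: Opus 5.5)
Your argument is correct, but it takes a different route from the paper. The paper proves the Besov rule in the same way as its Triebel--Lizorkin proof. It uses the continuous Feneuil--Joseph decomposition $fg=\Pi_f(g)+\Pi_g(f)+\Pi(f,g)+\sum W_1^{(h)}(W_1^{(k)}f\,W_1^{(n)}g)$. Its outer semigroup creates a genuinely diagonal term $\Pi(f,g)=\sum\int_0^1 W_t^{(m)}(W_t^{(h)}f\,W_t^{(k)}g)\,\frac{dt}{t}$ with $h,k<m$, in which neither factor carries a full $W^{(m)}$. The paper handles this term in three steps:
\begin{enumerate}[(a)]
\item expand $(tG_\nu)^m$ of the product by the Leibniz rule;
\item invoke the vector-field characterisation (Theorem \ref{thm:vector_field_characterisation_Besov_norm});
\item for the mixed terms $\gamma\neq0\neq\beta$, combine H\"older with split exponents and the complex interpolation Theorem \ref{thm:interpolation_Besov_spaces}, which is a Gagliardo--Nirenberg type step.
\end{enumerate}

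Your discrete Bony split $S_Nf\,S_Ng=\sum_{n\le N}f_nS_ng+\sum_{l\le N}S_{l-1}f\,g_l$ has no outer semigroup, so every term carries a full $W^{(m)}$ on one factor. The resonant interactions $l\approx n$ are simply part of $f_nS_ng$. When $n\ge j$, no derivatives are moved and $\alpha>0$ alone gives the decay $2^{-(n-j)d\alpha/(2Q)}$. When $n<j$, both $f_n$ and $S_ng$ are smooth at scale $2^{-n}$. The whole weight $V_\nu(2^{-j})^{-\alpha/Q}V_\nu^\eta$ is then absorbed by $f_n$ at cost $2^{(j-n)\alpha/2}$, which is beaten by the gain $2^{-(j-n)m}$.

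So you need neither interpolation nor the vector-field characterisation. In particular you never use Theorem \ref{thm:interpolation_Besov_spaces}, which is where the paper's argument needs finite exponents. The paper's route, in exchange, is literally the same scheme as for the Triebel--Lizorkin case, where continuous square functions and Proposition \ref{prop:semigroup_continuous_version_of_Fefferman_Stein_theorem} are the natural tools.

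Two details should be made explicit.
\begin{enumerate}[(1)]
\item The bounds $|X^\beta S_ng|\lesssim 2^{n|\beta|/2}e^{-b2^{-n}G_\nu}|g|$, including the case $\beta=0$, come from the telescoping identity $S_ng=\sum_{k<m}\frac{1}{k!}W^{(k)}_{2^{-n}}g$, once the factor $\frac{1}{(m-1)!}$ is restored in $f_n$ and $g_l$. Bounding $\int_{2^{-n}}^{1}|W^{(m)}_sg|\,\frac{ds}{s}$ term by term would lose a factor $n$.
\item In the Leibniz step you need a bound on $X^\gamma f_n$ itself, not the bound on $X^\gamma e^{-tG_\nu}f_n$ from the vector-field proof that you quote. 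The right bound is $|X^\gamma f_n|\lesssim 2^{n|\gamma|/2}\int_{2^{-n}}^{2^{-n+1}}e^{-\frac{b}{2}sG_\nu}|W^{(m)}_{s/2}f|\,\frac{ds}{s}$. It follows from Lemma \ref{lem:heat_operator_and_derivative} applied to $X^\gamma W^{(m)}_sf=2^mX^\gamma e^{-\frac{s}{2}G_\nu}W^{(m)}_{s/2}f$.
\end{enumerate}
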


\begin{theorem} \label{thm:algebra_properties_Triebel_Lizorkin_spaces}
Let $p, \, p_1, \, p_2, \, p_3, \, p_4 \in (1,\infty)$ and $q \in [1, \infty]$ be such that $\frac{1}{p} = \frac{1}{p_1} + \frac{1}{p_2} = \frac{1}{p_3} + \frac{1}{p_4}$. Given any $\alpha > 0$ and $\eta \in \R$, we have 
\begin{equation*} 
\|fg\|_{\tilde{F}^{p, q}_{\alpha, \eta}(d\mu_{\nu})} \lesssim \|f\|_{\tilde{F}^{p_1,q}_{\alpha, \eta}(d\mu_{\nu})} \|g\|_{L^{p_2}(d\mu_{\nu})} + \|f\|_{L^{p_3}(d\mu_{\nu})} \|g\|_{\tilde{F}^{p_4,q}_{\alpha,  \eta}(d\mu_{\nu})},   
\end{equation*}
for all $f \in \tilde{F}^{p_1,q}_{\alpha, \, \eta}(d\mu_{\nu}) \cap L^{p_3}(d\mu_{\nu})$ and $g \in \tilde{F}^{p_4, q}_{\alpha, \, \eta}(d\mu_{\nu}) \cap L^{p_2}(d\mu_{\nu})$. 
\end{theorem}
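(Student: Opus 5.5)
We prove the Triebel--Lizorkin estimate by following the strategy of \cite[Theorem 7.1]{Bruno_Marco_Vallarino_Besov_TL_20} and \cite{Feneuil_Joseph_Algebra_properties_2018}. We first use Theorem \ref{thm:vector_field_characterisation_Triebel_Lizorkin_norm} to replace the $\tilde{F}^{p,q}_{\alpha,\eta}$ norm of $fg$ by the discrete quantity built from $W^{(m)}_{2^{-j}}$. By Theorem \ref{thm:independence_of_parameters_Triebel_Lizorkin}, we may fix $m>\alpha/2$ large and take $t_0=0$ in the low-frequency term. That term is then $\|V_\nu^{-\alpha/Q+\eta} fg\|_{L^p(d\mu_\nu)}$, and H\"older's inequality bounds it by $\|V_\nu^{-\alpha/Q+\eta}f\|_{L^{p_1}}\|g\|_{L^{p_2}}$. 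This is at most the first term on the right-hand side, again by Theorem \ref{thm:independence_of_parameters_Triebel_Lizorkin} with $t_0=0$.

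For the high-frequency part, we use Lemma \ref{lem:decomposition_of_a_distibution/function} to decompose both $f$ and $g$ as $f=\sum_{k<m}\frac{1}{k!}W_1^{(k)}f+\frac{1}{(m-1)!}\sum_n f_n$, where $f_n=\int_{2^{-n}}^{2^{-n+1}}W_s^{(m)}f\,\frac{ds}{s}$, exactly as in the proof of Theorem \ref{thm:vector_field_characterisation_Triebel_Lizorkin_norm}. We then write $fg$ as a paraproduct: $\sum_n f_n S_n g+\sum_n S_{n-1}f\, g_n$, where $S_n g$ collects the pieces of $g$ at frequencies at most $2^{n}$ together with the $W_1^{(k)}g$ terms. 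We apply $W^{(m)}_{2^{-j}}=(2^{-j}G_\nu)^m e^{-2^{-j}G_\nu}$ to each product and split into $n\le j$ and $n>j$. For $n>j$ we estimate the product crudely. Lemma \ref{lem:heat_operator_and_derivative} gives $|W^{(m)}_{2^{-j}}h|\lesssim e^{-b2^{-j}G_\nu}|h|$, and we bound $|S_ng|\lesssim \mathcal M g$ pointwise by a heat maximal function, bounded on $L^{p_2}(d\mu_\nu)$. For $n\le j$ we expand $G_\nu^m(f_nS_ng)$ by the Leibniz rule in the vector fields $X$. Since $G_\nu$ is a second-order polynomial in $X$ plus the drift term $-2\nu\cdot X'$, this gives $\sum_{|\gamma|+|\beta|\le 2m} X^\gamma f_n\, X^\beta S_n g$. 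The estimates $|X^\gamma f_n|\lesssim 2^{n|\gamma|/2}e^{-c2^{-n}G_\nu}(\cdot)$ and $|X^\beta S_ng|\lesssim 2^{n|\beta|/2}\mathcal M g$ from Lemma \ref{lem:heat_operator_and_derivative} produce the gain factor $2^{-(j-n)m}$.

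Weights are moved through the heat operators with Lemma \ref{lemma:ball_volume_and_heat_operator_commutation}. The ratio $V_\nu(2^{-n})^{\alpha/Q}/V_\nu(2^{-j})^{\alpha/Q}$ is controlled by $2^{(j-n)\alpha/2}$ or $2^{(j-n)d\alpha/(2Q)}$, exactly as in the proof of Theorem \ref{thm:vector_field_characterisation_Triebel_Lizorkin_norm}. The resulting double sums are then summable by Lemma \ref{lem:Cor_of_Schur_lemma}(i), using $m>\alpha/2$ and $\alpha>0$; the condition $\alpha>0$ is exactly what makes the $n>j$ tail converge. This leaves expressions of the form $\bigl\|(\sum_n (V_\nu(2^{-n})^{-\alpha/Q}V_\nu^\eta\, e^{-c2^{-n}G_\nu}|W^{(m)}_{2^{-n}}f|)^q)^{1/q}\,\mathcal M g\bigr\|_{L^p}$. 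We remove the heat operators with Proposition \ref{prop:semigroup_version_of_Fefferman_Stein_theorem} and then apply H\"older's inequality with exponents $p_1,p_2$, which yields $\|f\|_{\tilde F^{p_1,q}_{\alpha,\eta}}\|g\|_{L^{p_2}}$. The symmetric paraproduct gives the term with $p_3,p_4$. The Besov case (Theorem \ref{thm:algebra_properties_Besov_spaces}) runs the same way, except that H\"older's inequality is applied inside each $L^p$ norm at fixed $j$ before the $\ell^q$ sum. This avoids the vector-valued maximal inequality, so $p_i=1$ is allowed there.

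The main obstacle is the interaction between the product and the heat semigroup. Proposition \ref{prop:semigroup_version_of_Fefferman_Stein_theorem} needs $1<p<\infty$, but here it must be applied when one factor is the fixed function $\mathcal M g$, so the product $e^{-c2^{-n}G_\nu}(F_n)\cdot\mathcal M g$ has to be handled directly. I would first try the pointwise bound $e^{-tG_\nu}|F|\cdot \mathcal M g$, then apply Proposition \ref{prop:semigroup_version_of_Fefferman_Stein_theorem} in $L^{p_1}$ \emph{after} H\"older's inequality, since the heat operators act only on the $f$-pieces. The drift terms in the Leibniz expansion also need care, but they carry lower powers of $2^{n}$ and are harmless because $t\le 1$.
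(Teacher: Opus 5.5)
Your argument is correct in substance, but it takes a genuinely different route from the paper. The paper uses the continuous Feneuil--Joseph decomposition $fg=\Pi_f(g)+\Pi_g(f)+\Pi(f,g)+\sum_{h,k,n<m}\frac{1}{h!\,k!\,n!}W_1^{(h)}(W_1^{(k)}f\,W_1^{(n)}g)$.

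Its estimate for $\Pi_f(g)$ parallels your $n>j$ versus $n\le j$ split. It splits $t<u$ versus $t>u$, uses the two volume-ratio regimes and Lemma~\ref{lem:Cor_of_Schur_lemma}(ii), and controls $g$ by the heat maximal function. The only difference there is that the decay for $t>u$ comes from the heat factor inside $W_t^{(h)}$, not from a Leibniz expansion.

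The real difference is the resonant term $\Pi(f,g)=\sum\int_0^1W_t^{(m)}(W_t^{(h)}f\,W_t^{(k)}g)\,\frac{dt}{t}$. There the smoothness sits in the outer $W_t^{(m)}$, so expanding $(tG_\nu)^m$ shares the derivatives between $f$ and $g$. The mixed terms $\gamma\neq0\neq\beta$ then need Theorem~\ref{thm:vector_field_characterisation_Triebel_Lizorkin_norm} for both factors and H\"older with split exponents. They also need the identity $(\tilde F^{p_3,\infty}_{0,0},\tilde F^{p_1,q}_{\alpha,\eta})_{[\theta]}=\tilde F^{r_1,q_1}_{\theta\alpha,\theta\eta}$ from Theorem~\ref{thm:interpolation_Triebel_Lizorkin_spaces}.

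Your discrete Bony decomposition is just the telescoping identity $S_nf\,S_ng-S_{n-1}f\,S_{n-1}g=f_n\,S_ng+S_{n-1}f\,g_n$. It puts the full $W^{(m)}$-smoothness of $f$ into $f_n$ and absorbs the resonant interactions into $f_nS_ng$. For $n\le j$ every derivative, on $f_n$ or on $S_ng$, costs only $2^{n/2}$, and $g$ enters only through the heat maximal function.

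So you need neither interpolation nor any smoothness of $g$. Your argument is essentially the proof of Theorem~\ref{thm:vector_field_characterisation_Triebel_Lizorkin_norm} with a bounded factor attached. The paper's route, in exchange, stays parallel to the Lie group literature and reuses its characterisation and interpolation results.

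Some details need fixing when you write it out:
\begin{enumerate}[(i)]
\item Your two sums omit $S_0f\,S_0g$, i.e.\ the products $W_1^{(k)}f\,W_1^{(k')}g$. This term is handled like \eqref{eq:claim_2}.
\item Derive $|X^\beta S_ng|\lesssim 2^{n|\beta|/2}\mathcal Mg$ from the identity $S_ng=\sum_{k<m}\frac{1}{k!}W^{(k)}_{2^{-n}}g$. This follows by differentiating $s\mapsto\sum_{k<m}\frac{(sG_\nu)^k}{k!}e^{-sG_\nu}$. Summing the bounds for the individual pieces $g_{n'}$, $n'\le n$, would lose a factor $n$ when $\beta=0$.
\item The obstacle in your last paragraph is not a real one, since $p\in(1,\infty)$. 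First move the weights inside with Lemma~\ref{lemma:ball_volume_and_heat_operator_commutation}. Then remove the outer $e^{-b2^{-j}G_\nu}$ by Proposition~\ref{prop:semigroup_version_of_Fefferman_Stein_theorem} in $L^p$. Then sum in $j$ and apply H\"older, and only then use the same proposition in $L^{p_1}$ on the $f$-pieces.
\item The discrete norm with $t_0=0$ comes from Theorem~\ref{thm:discrete_characterisation-Triebel--Lizorkin}, not from the vector-field characterisation.
\end{enumerate}
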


Analogous results hold true for classical function spaces as well, with exactly the same conditions. Here also, we shall only prove Theorem \ref{thm:algebra_properties_Triebel_Lizorkin_spaces} as Theorem \ref{thm:algebra_properties_Besov_spaces} can be proved in a similar manner.
\begin{proof}[Proof of Theorem \ref{thm:algebra_properties_Triebel_Lizorkin_spaces}]
We use the standard paraproduct technique for the proof of the theorem. Following the proof of \cite[Proposition 5.2]{Feneuil_Joseph_Algebra_properties_2018}, one can easily verify that for any $m \in \mathbb{N}$, we have the following decomposition:  
\begin{align*} 
fg = \Pi_f(g) + \Pi_g(f) + \Pi (f,g) + \sum_{h,k,n=0}^{m-1} \frac{1}{h! \, k! \, n!} W_1^{(h)} (W_1^{(k)} f \, W_1^{(n)} g),
\end{align*}
where 
\begin{align*}
\Pi_f(g) & = \sum_{h,k=0}^{m-1} \frac{1}{(m-1)! \, h! \, k!} \int_{0}^{1} W_t^{(h)} (W_t^{(m)} f \, W_t^{(k)} g) \, \frac{dt}{t}, \\
 \text{and} \qquad \Pi(f,g) & = \sum_{h,k =0}^{m-1} \frac{1}{(m-1)! \, h! \, k!} \int_{0}^{1} W_t^{(m)} (W_t^{(h)} f \, W_t^{(k)} g) \, \frac{dt}{t}.
\end{align*}

In the rest of the proof, let us fix $m = [\alpha/2]+1$. In view of the above decomposition, it is clear that the theorem would follow once we prove the following four estimates: 
\begin{align} 
\|V_{\nu}^{-\alpha/Q+\eta} f g\|_{L^p(d\mu_{\nu})} & \leq \|f\|_{\tilde{F}^{p_1, q}_{\alpha, \eta}(d\mu_{\nu})} \|g\|_{L^{p_2}(d\mu_{\nu})}, 
\label{eq:claim_1} \\ 
\mathcal{\tilde{F}}^{p,q}_{\alpha, \eta}(W_1^{(h)} (W_1^{(k)} f \, W_1^{(n)} g)) & \lesssim \|f\|_{\tilde{F}^{p_1,q}_{\alpha, \eta}(d\mu_{\nu})} \|g\|_{L^{p_2}(d\mu_{\nu})}, 
\label{eq:claim_2} \\ 
\mathcal{\tilde{F}}^{p,q}_{\alpha, \eta}(\Pi_f(g)) & \lesssim \|f\|_{\tilde{F}^{p_1,q}_{\alpha, \eta}(d\mu_{\nu})} \|g\|_{L^{p_2}(d\mu_{\nu})}, 
 \label{eq:claim_3} \\
\mathcal{\tilde{F}}^{p,q}_{\alpha, \eta}(\Pi(f,g)) & \lesssim \|f\|_{\tilde{F}^{p_1,q}_{\alpha, \eta}(d\mu_{\nu})} \|g\|_{L^{p_2}(d\mu_{\nu})} + \|f\|_{L^{p_3}(d\mu_{\nu})} \|g\|_{\tilde{F}^{p_4,q}_{\alpha, \eta}(d\mu_{\nu})}. 
\label{eq:claim_4}
\end{align}

\medskip 
Clearly, \eqref{eq:claim_1} follows from H\"{o}lder's inequality. Next, we shall prove \eqref{eq:claim_2}.
For the same, making use of Lemma \ref{lem:heat_operator_and_derivative} (with $\kappa' = 2b, \, \kappa = 4b$ and $t_0 = \frac{1}{2}$), we have  
\begin{align*}
|W_t^{(m)} \, W_1^{(h)} (W_1^{(k)} f \, W_1^{(n)} g)| & = |t^m \, G_\nu^{m+h} \, e^{-(t+1)G_\nu} \, (W_1^{(k)} f \, W_1^{(n)} g)| \\
& \lesssim t^m \, (t+1)^{-m-h} \, e^{-b (t+1)G_\nu} \, |W_1^{(k)} f \, W_1^{(n)} g| \\
&\leq t^m \, e^{- 2b^2 \, G_\nu} \, |W_1^{(k)} f \, W_1^{(n)} g| . 
\end{align*}
Using the above estimate along with Lemmas \ref{lem:heat_operator_and_derivative},  \ref{lemma:ball_volume_and_heat_operator_commutation} and the $L^{p}(d\mu_{\nu})$-boundedness of the heat operator, we have 
\begin{align*}
& \mathcal{\tilde{F}}^{p,q}_{\alpha, \eta}(W_1^{(h)} (W_1^{(k)} f \, W_1^{(n)} g))  \\
&= \left\|\left(\int_{0}^{1} (V_{\nu}(t)^{-\alpha/Q}\, V_{\nu}^{\eta} \, |W_t^{(m)} \, W_1^{(h)} (W_1^{(k)} f \, W_1^{(n)} g)|)^{q} \, \frac{dt}{t}\right)^{1/q}\right\|_{L^p(d\mu_{\nu})} \\
& \lesssim \left\|\left(\int_{0}^{1} (V_{\nu}^{-\alpha/Q+\eta} \, t^{m-\alpha/2} \,  e^{-2 b^2 \, G_\nu} \, |W_1^{(k)} f \, W_1^{(n)} g|)^{q} \, \frac{dt}{t} \right)^{1/q}\right\|_{L^p(d\mu_{\nu})} \\
& \lesssim \|V_{\nu}^{-\alpha/Q+\eta} |W_1^{(k)} f \, W_1^{(n)} g|\|_{L^p(d\mu_{\nu})} \\
& \lesssim \|V_{\nu}^{-\alpha/Q+\eta} |W_1^{(k)} f|\|_{L^{p_1}(d\mu_{\nu})} \|W_1^{(n)} g\|_{L^{p_2}(d\mu_{\nu})} \\
& \lesssim \|V_{\nu}^{-\alpha/Q+\eta} f\|_{L^{p_1}(d\mu_{\nu})} \, \|g\|_{L^{p_2}(d\mu_{\nu})} \\
& \lesssim \|f\|_{\tilde{F}^{p_1,q}_{\alpha, \eta}(d\mu_{\nu})} \, \|g\|_{L^{p_2}(d\mu_{\nu})}, 
\end{align*}
which completes the proof of claim \eqref{eq:claim_2}. 

\medskip Now, we move on to proving \eqref{eq:claim_3}. In the following, we have $0 \leq h, k \leq m-1$, and for convenience, we use the notation $F_{t, m, k} = W_t^{(m)} f \, W_t^{(k)} g$. Now, it follows with the help of Lemma \ref{lem:heat_operator_and_derivative} that 
\begin{align*}
|W_u^{(m)} \, W_t^{(h)} \, F_{t, m, k}| = |u^m \, t^{h} G_\nu^{m+h} \, e^{-\left(t + u\right)G_\nu} \, F_{t, m, k}| \lesssim u^{m} \, (t+u)^{-m} \, e^{-b\left(t + u\right)G_\nu} \, |F_{t, m, k}|, 
\end{align*}
which along with Proposition \ref{prop:semigroup_continuous_version_of_Fefferman_Stein_theorem} gives us
\begin{align*}
& \left\|\left(\int_{0}^{1}\left(V_{\nu}(u)^{-\alpha/Q} \, V_{\nu}^{\eta} \, \int_{0}^{1} |W_u^{(m)} \, W_t^{(h)} \, F_{t, m, k}|  \,  \frac{dt}{t} \right)^q \, \frac{du}{u}\right)^{1/q}\right\|_{L^p(d\mu_{\nu})} \\ 
& \lesssim \left\|\left(\int_{0}^{1}\left(V_{\nu}(u)^{-\alpha/Q} \, V_{\nu}^{\eta} \, \int_{0}^{1} u^{m} \, (u+t)^{-m} \, e^{-b \, (t + u) G_\nu} \, |F_{t, m, k}| \,  \frac{dt}{t} \right)^q \, \frac{du}{u}\right)^{1/q}\right\|_{L^p(d\mu_{\nu})} \\ 
& \lesssim \left\|\left(\int_{0}^{1}\left(V_{\nu}(u)^{-\alpha/Q} \, V_{\nu}^{\eta} \, u^m \, e^{-b \, uG_\nu} \, \int_{0}^{1}  (u+t)^{-m} \, e^{-b \, tG_\nu} |F_{t, m, k}| \, \frac{dt}{t}\right)^q \, \frac{du}{u}\right)^{1/q}\right\|_{L^p(d\mu_{\nu})}
\\ & \lesssim  \left\|\left(\int_{0}^{1} \left(V_{\nu}(u)^{-\alpha/Q}  \, V_{\nu}^{\eta} \, u^m \int_{0}^{1}  (u+t)^{-m} \, e^{-b \, tG_\nu} \, |F_{t, m, k}| \, \frac{dt}{t} \right)^q \, \frac{du}{u}\right)^{1/q}\right\|_{L^p(d\mu_{\nu})}. 
\end{align*}
We decompose the inner integral in $t$-variable in two parts, namely, over $(0,u)$ and $(u,1)$. In each of these domains, we make use of the ball volume estimates for $V_{\nu}(t) / V_{\nu}(u)$, so that the above estimate can be further dominated (with the help of Proposition \ref{prop:semigroup_continuous_version_of_Fefferman_Stein_theorem}, Lemma \ref{lem:Cor_of_Schur_lemma}, and boundedness of the heat maximal operator on $L^{p}(d\mu_{\nu})$-spaces) by  
\begin{align*}
&  \left\|\left(\int_{0}^{1}\left(u^{m-\alpha d/2Q}  \int_{0}^{u}  (u+t)^{-m} \,  \left(t^{\alpha d/2Q} \, V_{\nu}(t)^{-\frac{\alpha}{Q}} \, V_{\nu}^{\eta} \, e^{-b \, tG_\nu} \, |F_{t, m, k}|\right) \, \frac{dt}{t} \right)^{q} \frac{du}{u} \right)^{1/q} \right\|_{L^{p}(d\mu_{\nu})} \\
& + \left\|\left(\int_{0}^{1}\left(u^{m-\alpha/2} \int_{u}^{1} (u+t)^{-m} \, \left(t^{\alpha/2} \, V_{\nu}(t)^{-\frac{\alpha}{Q}} \, V_{\nu}^{\eta} \, e^{-b \, tG_\nu} \, |F_{t, m, k}| \right) \, \frac{dt}{t}\right)^q \, \frac{du}{u}\right)^{1/q}\right\|_{L^p(d\mu_{\nu})} \\
& \lesssim \left\|\left(\int_{0}^{1}(V_{\nu}(t)^{-\alpha/Q}\, V_{\nu}^{\eta} \, e^{-b \, tG_\nu} |F_{t, m, k}|)^q \, \frac{dt}{t} \right)^{1/q}\right\|_{L^p(d\mu_{\nu})}
\\ & \lesssim  \left\|\left(\int_{0}^{1}(V_{\nu}(t)^{-\alpha/Q}\, V_{\nu}^{\eta} \, |W_t^{(m)} f \, W_t^{(k)} g|)^q \, \frac{dt}{t} \right)^{1/q} \right\|_{L^p(d\mu_{\nu})} \\  
& \lesssim \|\sup_{t \in (0,1)} e^{-b \, tG_\nu} |g|\|_{L^{p_2}(d\mu_{\nu})} \left\|\left(\int_{0}^{1}(V_{\nu}(t)^{-\alpha/Q}\, V_{\nu}^{\eta} \, | W_t^{(m)} f|)^{q} \frac{dt}{t} \right)^{1/q}\right\|_{L^{p_1}(d\mu_{\nu})}
\\  & \lesssim \|f\|_{\tilde{F}^{p_1,q}_{\alpha, \eta}(d\mu_{\nu})} \|g\|_{L^{p_2}(d\mu_{\nu})}, 
\end{align*}
and this completes the proof of claim \eqref{eq:claim_3}. 

\medskip 
Finally, to establish claim \eqref{eq:claim_4}, note that $\mathcal{\tilde{F}}^{p,q}_{\alpha, \eta}(\Pi(f,g))$ is dominated by a finite sum of terms of the following type:
\begin{align} \label{term_in_last_claim}
\left\|\left(\int_{0}^{1} \left(V_{\nu}(u)^{-\alpha/Q} \, V_{\nu}^{\eta} \, \int_{0}^{1} |W_{u}^{(m)} W_{t}^{(m)} (W_{t}^{(h)}f \, W_{t}^{(k)}g)| \, \frac{dt}{t} \right)^{q} \frac{du}{u}\right)^{1/q}\right\|_{L^{p}(d\mu_{\nu})}.    
\end{align} 
Now, by using Lemma \ref{lem:heat_operator_and_derivative}, we have 
\begin{align*}
&|W_u^{(m)} \, W_t^{(m)} \, (W_t^{(h)} f \, W_t^{(k)} g)| \\
& = |u^m \, G_\nu^m \, e^{-(t+u)G_\nu} \, (tG_\nu)^m \, (W_t^{(h)} f \, W_t^{(k)} g)| \\
& \lesssim u^m \, (u+t)^{-m} \, e^{-b \, (u+t) G_\nu} \, |(tG_\nu)^m \, (W_t^{(h)} f \, W_t^{(k)} g)| \\
& \lesssim  u^m \, (u+t)^{-m} \, e^{-b \, (u+t) G_\nu} \, t^{m+h+k} \sum_{|\gamma| + |\beta|= 2(m+h+k)} |(X^{\gamma} e^{-tG_\nu} f) \, (X^{\beta} e^{-tG_\nu} g)|, 
\end{align*}
so it suffices to work with one such term in $(\gamma, \, \beta)$. More precisely, if we write $F_{t, \gamma, \beta} = t^{m+h+k} (X^{\gamma} e^{-tG_\nu} f) \, (X^{\beta} e^{-tG_\nu} g)$, then repeating the arguments performed in the proof of claim \ref{eq:claim_3}, we would get that a term of the type \eqref{term_in_last_claim} is bounded by a finite sum of terms of the following form:
\begin{align} 
\label{eq:claim_4-est-1} 
& \left\|\left(\int_{0}^{1} \left( V_{\nu}(u)^{-\alpha/Q} \, V_{\nu}^{\eta} \, e^{- b u G_\nu} \, \int_{0}^{1} u^m \,  (u+t)^{-m} e^{-b \, t G_\nu} |F_{t, \gamma, \beta}| \, \frac{dt}{t}\right)^{q} \, \frac{du}{u}\right)^{1/q} \right\|_{L^p(d\mu_{\nu})} \\ 
\nonumber & \lesssim \left\|\left(\int_{0}^{1} \left( V_{\nu}(t)^{-\alpha/Q} \, V_{\nu}^{\eta} \, |F_{t, \gamma, \beta}| \right)^{q} \frac{dt}{t} \right)^{1/q} \right\|_{L^p(d\mu_{\nu})}.
\end{align} 

We shall estimate \eqref{eq:claim_4-est-1} in three different cases: when $\gamma =0$, when $\beta =0$, and when both $\beta$ and $\gamma$ are non-zero. Let us first consider the case when $\gamma =0$. In this case, \eqref{eq:claim_4-est-1} can be dominated by 
\begin{align*}
& \left\|(\sup_{t \in (0,1)} |e^{-tG_{\nu}}f|)\left(\int_{0}^{1} (V_{\nu}(t)^{-\alpha/Q} \, V_{\nu}^{\eta} \, |t^{m+h+k} \, X^{\beta} e^{-tG_{\nu}}g| )^{q} \, \frac{dt}{t} \right)^{1/q}\right\|_{L^p(d\mu_{\nu})} \\ 
& \lesssim \left\|\sup_{t \in (0,1)} |e^{-tG_{\nu}}f|\right\|_{L^{p_3}(d\mu_{\nu})} \left\|\left(\sum_{j=1}^{\infty} \left( V_{\nu}(2^{-j})^{-\alpha/Q} \, V_{\nu}^{\eta} \, W_{2^{-j}}^{(|\beta|/2), \ast} g \right)^{q} \right)^{1/q}\right\|_{L^{p_4}(d\mu_{\nu})} \\ 
& \lesssim \|f\|_{L^{p_3}(d\mu_{\nu})} \, \|g\|_{\tilde{F}^{p_4,q}_{\alpha, \eta}(d\mu_{\nu})}, 
\end{align*}
where the last step uses Theorem \ref{thm:vector_field_characterisation_Triebel_Lizorkin_norm}, and with this the claim \eqref{eq:claim_4} is proved in the case when $\gamma = 0$. The case when $\beta=0$ can be handled in the same manner. 

\medskip 
Finally, let us work when $\beta \neq 0 \neq \gamma$. In this case, let us write $\theta = \frac{|\gamma|}{|\gamma|+|\beta|}$. Clearly, $\theta \in (0,1)$. Let us also write $\alpha_1 = \theta \alpha$ and $\alpha_2 = (1-\theta) \alpha$ so that $\alpha = \alpha_1 + \alpha_2$. Then, final estimate in \eqref{eq:claim_4-est-1} can be dominated by 
\begin{align}
\label{eq:claim_4-est-2} 
\nonumber & \left\|\left(\int_{0}^{1} \left(V_{\nu}(t)^{-\alpha/Q} \, V_{\nu}^{\eta} \, |F_{t, \gamma, \beta}| \right)^{q} \frac{dt}{t} \right)^{1/q} \right\|_{L^p(d\mu_{\nu})} \\
\nonumber & \lesssim \left\|\left(\sum_{j=1}^{\infty} (V_{\nu}(2^{-j})^{-\alpha/Q} \, V_{\nu}^{\eta} \, W_{2^{-j}}^{(|\gamma|/2), \ast} f \, W_{2^{-j}}^{(|\beta|/2), \ast} g )^{q} \, \right)^{1/q} \right\|_{L^p(d\mu_{\nu})} \\
\nonumber & \lesssim \left\|\left(\sum_{j=1}^{\infty} (V_{\nu}(2^{-j})^{-\alpha_1/Q} \, V_{\nu}^{\eta\theta} \, W_{2^{-j}}^{(|\gamma|/2), \ast} f)^{q_1} \right)^{1/q_1}\right\|_{L^{r_1}(d\mu_{\nu})} \\
\nonumber & \quad \times \left\|\left(\sum_{j=1}^{\infty} (V_{\nu}(2^{-j})^{-\alpha_2/Q}  \, V_{\nu}^{\eta (1-\theta)} \, W_{2^{-j}}^{(|\beta|/2), \ast} g)^{q_2} \right)^{1/q_2}\right\|_{L^{r_2}(d\mu_{\nu})} \\
& \lesssim \|f\|_{\tilde{F}^{r_1, q_1}_{\alpha_1, \eta \theta}(d\mu_{\nu})} \, \|g\|_{\tilde{F}^{r_2, q_2}_{\alpha_2, \eta (1-\theta)}(d\mu_{\nu})},
\end{align}
where the H\"{o}lder's inequality is applied for the pairs $(q_1, q_2)$ and $(r_1, r_2)$ where $\frac{1}{q_1} =\frac{\theta}{q}, \, \frac{1}{q_2} =\frac{1-\theta}{q}, \,  \frac{1}{r_1}= \frac{\theta}{p_1}+\frac{1-\theta}{p_3},$ and $ \frac{1}{r_2} = \frac{\theta}{p_2} +\frac{1-\theta}{p_4}.$

\medskip 
We can now invoke the complex interpolation Theorem \ref{thm:interpolation_Triebel_Lizorkin_spaces} to have 
\begin{align*}
\left( \tilde{F}^{p_3, \infty}_{0,0}(d\mu_{\nu}), \tilde{F}^{p_1, q}_{\alpha,\eta}(d\mu_{\nu}) \right)_{[\theta]} & = \tilde{F}^{r_1, q_1}_{\alpha_1, \eta \theta}(d\mu_{\nu}), \\ 
\quad \text{and} \qquad 
\left( \tilde{F}^{p_4, q}_{\alpha, \eta}(d\mu_{\nu}), \tilde{F}^{p_2, \infty}_{0,0}(d\mu_{\nu}) \right)_{[\theta]} & = \tilde{F}^{r_2, q_2}_{\alpha_2, \eta (1-\theta)}(d\mu_{\nu}),
\end{align*}
which implies that 
\begin{align*}
\|f\|_{\tilde{F}^{r_1, q_1}_{\alpha_1, \eta \theta}(d\mu_{\nu})} \, \|g\|_{\tilde{F}^{r_2, q_2}_{\alpha_2, \eta (1-\theta)}(d\mu_{\nu})} & \lesssim \|f\|_{\tilde{F}^{p_3, \infty}_{0, 0}(d\mu_{\nu})}^{1-\theta} \, \|f\|_{\tilde{F}^{p_1, q}_{\alpha, \eta}(d\mu_{\nu})}^{\theta} \, \|g\|_{\tilde{F}^{p_4, q}_{\alpha, \eta}(d\mu_{\nu})}^{1-\theta}  \, \|g\|_{\tilde{F}^{p_2, \infty}_{0, 0}(d\mu_{\nu})}^{\theta} \\
& \lesssim \|f\|_{L^{p_3}(d\mu_{\nu})}^{1-\theta} \, \|f\|_{\tilde{F}^{p_1, q}_{\alpha, \eta}(d\mu_{\nu})}^{\theta} \, \|g\|_{\tilde{F}^{p_4, q}_{\alpha, \eta}(d\mu_{\nu})}^{1-\theta} \, \|g\|_{L^{p_2}(d\mu_{\nu})}^{\theta} \\
& \lesssim \|f\|_{L^{p_3}(d\mu_{\nu})} \, \|g\|_{\tilde{F}^{p_4, q}_{\alpha, \eta}(d\mu_{\nu})} +  \|f\|_{\tilde{F}^{p_1, q}_{\alpha, \eta}(d\mu_{\nu})} \, \|g\|_{L^{p_2}(d\mu_{\nu})},
\end{align*}
and putting this in \eqref{eq:claim_4-est-2}, the claim \eqref{eq:claim_4} is established in the case when $\beta \neq 0 \neq \gamma$. 

\medskip 
This completes the proof of all four claims \eqref{eq:claim_1}--\eqref{eq:claim_4} and hence the theorem.
\end{proof}


\section*{Acknowledgments}
Second author is grateful to Indian Institute of Science Education and Research (IISER) Bhopal for the Senior Research Fellowship (SRF). First and third authors were partially supported by the Anusandhan National Research Foundation (ANRF), India, under the research project ANRF/ARG/2025/003732/MS.  


\providecommand{\bysame}{\leavevmode\hbox to3em{\hrulefill}\thinspace}
\providecommand{\MR}{\relax\ifhmode\unskip\space\fi MR }
\providecommand{\MRhref}[2]{%
  \href{http://www.ams.org/mathscinet-getitem?mr=#1}{#2}
}
\providecommand{\href}[2]{#2}


\end{document}